\numberwithin{equation}{section}
\def\End{\operatorname{End}}
\def\ker{\operatorname{ker}}
\def\Rep{\operatorname{Rep}}
\def\Ber{\operatorname{Ber}}
\def\Ad{\operatorname{Ad}}
\def\Spec{\operatorname{Spec}}
\def\C{\mathbb{C}}
\def\Q{\mathbb{Q}}
\def\N{\mathbb{N}}
\def\Z{\mathbb{Z}}
\def\G{\mathbb{G}}
\def\A{\mathbb{A}}
\def\TT{\mathcal{T}}
\def\OO{\mathcal{O}}
\def\FF{\mathcal{F}}
\def\WW{\mathcal{W}}
\def\UU{\mathcal{U}}
\def\NN{\mathcal{N}}
\def\VV{\mathcal{V}}
\def\II{\mathcal{I}}
\def\a{\mathfrak{a}}
\def\b{\mathfrak{b}}
\def\c{\mathfrak{c}}
\def\m{\mathfrak{m}}
\def\p{\mathfrak{p}}
\def\q{\mathfrak{q}}
\def\g{\mathfrak{g}}
\def\t{\mathfrak{t}}
\def\h{\mathfrak{h}}
\def\k{\mathfrak{k}}
\def\l{\mathfrak{l}}
\def\s{\mathfrak{s}}
\def\o{\mathfrak{o}}
\def\d{\partial}
\def\ol{\overline}
\def\sub{\subseteq}
\newtheorem{thm}{Theorem}[section]
\newtheorem{cor}[thm]{Corollary}
\newtheorem{lemma}[thm]{Lemma}
\newtheorem{prop}[thm]{Proposition}
\theoremstyle{definition}
\newtheorem{definition}[thm]{Definition}
\theoremstyle{remark}
\newtheorem{remark}[thm]{Remark}
\newtheorem{example}[thm]{Example}
\numberwithin{equation}{section}
\begin{document}
	
	\title{Localization theorem for homological vector fields}
	
	\author{Vera Serganova, Alexander Sherman}

	\begin{abstract} We present a general theorem which computes the cohomology of a homological vector field on global sections of vector bundles over smooth affine supervarieties.  The hypotheses and results have the clear flavor of a localization theorem. 
	\end{abstract}
	
	\maketitle
	\pagestyle{plain}

	\section{Introduction}
	
	Let $X$ be an affine supervariety, and let $Q$ be an odd vector field on $X$; that is, an odd derivation of its algebra of functions, $\Bbbk[X]$.  We call $Q$ \emph{homological} if $Q^2$ acts semisimply on $\Bbbk[X]$.  As the name suggests, taking the cohomology of a homological vector field, i.e.~$\ker(Q:\Bbbk[X]^{Q^2})/\operatorname{Im}(Q:\Bbbk[X]^{Q^2})$, is a `nice' operation.  Here $\Bbbk[X]^{Q^2}$ denotes the $Q^2$-invariants of $\Bbbk[X]$.  We write this cohomology, which is a super vector space, as $DS_Q\Bbbk[X]$, because in the case when $Q$ comes from the action of a Lie superalgebra $\g$, it is exactly the value of the Duflo-Serganova functor, from the theory of Lie superalgebras.  The goal of the present paper is to understand conditions under which we may localize the computation of $DS_Q$ to a smooth, closed subvariety $Y$ of $X$.
	
	The Duflo-Serganova functor has played a prominent role in the representation theory of Lie superalgebras since being introduced in \cite{duflo2005associated} (see, for instance, \cite{heidersdorf2014cohomological}, \cite{serganova2011superdimension}, \cite{gorelik2020semisimplicity}, \cite{entovaaizenbud2019dufloserganova}, \cite{entova2018deligne}).  For an up-to-date survey dealing with the Duflo-Serganova functor in the representation theory of Lie superalgebras, we refer to \cite{GHSS}.
	
	On the other hand, localization theorems have proven to be powerful tools in computations of integrals and cohomology, providing vast generalizations of previously understood results.  As an important example for us, in \cite{schwarzzaborsupersymlocal} a localization theorem was proven for computing Berezin integrals on a supermanifold when the volume form admits an odd symmetry satisfying sufficiently nice properties.  This was recently used in \cite{SS22} and \cite{SV} to prove `splitting 'properties of distinguished subgroups of certain (almost) simple supergroups, which analogizes the definition of Sylow subgroups in the finite characteristic setting. In this paper, our hypotheses are not too far from the ones of \cite{schwarzzaborsupersymlocal}, except that we work in the algebraic category.

	More precisely, suppose that $X$ is a smooth affine supervariety with odd vector field $Q$, such that $Q^2$ acts semisimply on $\Bbbk[X]$.  Then if $Q$ vanishes on a smooth subvariety $Y$, such that it acts in a certain nondegenerate way on the normal bundle of $Y$ in $X$, we show that $DS_Q\Bbbk[X]=\Bbbk[Y]$.  One can view this theorem as a generalization of a result which is already known: that if $Q$ is non-vanishing on all of $X$, then $DS_Q\Bbbk[X]=0$.
	
	The precise theorem (\cref{main_thm}) is slightly more general, both in that it begins with a vector bundle on $X$, and it weakens the assumption that $Q$ needs to vanish on all of $Y$.  In any case, the result provides a powerful tool for computations, many of which are of great interest.  We attempt to survey some of the applications in Sections 5 and 6.
	
	As will be seen in the proof, the main technical input is from the theory of commutative algebra; namely, the vector fields we consider have the property that locally they define a Koszul complex for a sequence of regular elements in a regular commutative ring.  Thus by using what is already known about the Koszul complex in such a setting, we are able to prove the vanishing results for the cohomology of our operator.
	
	\begin{remark}
		It is clear that one can take the cohomology 
		\[
		\ker(Q:\Bbbk[X]^{Q^2})/\operatorname{Im}(Q:\Bbbk[X]^{Q^2}),
		\]
		without the assumption that $Q^2$ acts semisimply; however this assumption is crucial for our results, as it implies that we are working with the action of a Lie superalgebra which has a `nice' (Frobenius) category of representations, in which case $DS_Q$ has the interpretation of a semisimplification functor.  Thus we have enough projective-injective modules in our category, which is used throughout. See Section 3 for more details.
	\end{remark}
	
	\subsection{Relation to physics} The Duflo-Serganova functor has appeared under different guises in theoretical physics.  Odd operators $Q$ in a supersymmetric field theory that satisfy $Q^2=0$ are examples of BRST operators, and allow one to employ the so-called BRST formalism, which includes taking cohomology in $Q$.  Such a situation arises in several places in the literature.  In \cite{W}, it was used as a key part in topological and holomorphic twists of  supersymmetric field theories.  Twisting a supersymmetric field theory gives rise to simpler field theories that can be topological (giving a TQFT), holomorphic, or something in between, depending on properties of the chosen $Q$.  A mathematically rigorous approach to the twisting of supersymmetric field theories is explained in \cite{Cos}. 
	
	Note that our setting is more flexible in that we do not impose the requirement that $Q^2=0$; however we show in \cref{cor_assume_sq_zero} that one can always reduce to the case when $Q^2=0$.
	
	In \cite{CCMV}, the authors also consider the cohomology of homological vector fields (referred to as cohomological reductions in their paper) on the smooth (or $L^2$) sections of certain vector bundles on the target spaces of sigma models, which are homogeneous superspaces. In contrast, in the algebraic setting we find unexpected extra components of these cohomological reductions; see for instance the table in \cref{thm table} for the case of symmetric spaces.  In fact, when our space is a group $G$ and we consider the action of an adjoint vector field, we find extra odd infinitesimal symmetries in the cohomological reduction, which will themselves act on the Duflo-Serganova functor (see \cref{thm G_u comp}).
	
	We finally highlight the connection of our work to \cite{GK}, where the cohomology of a vector field on $Q$-bundles is applied to the study of gauge PDEs.  There, they define a notion of equivalent reduction of supermanifolds with a BRST operator $Q$ as ones which differ from one another by a `$Q$-contractible' fiber bundle, i.e. a $Q$-bundle with trivial $Q$-cohomology along the fibers.  In this case they obtain that the $Q$-cohomology of supermanifolds with equivalent reductions is the same.
	
	\subsection{Outlook for non-affine varieties}  Our localization theorem can be extended to the non-affine case, although one must work in the derived category to obtain the correct framework.  This will be left to a future work, where we will in particular study applications to vector bundles on flag varieties.
	
	\subsection{Summary of sections} In Section 2 we recall the basic supergeometric language we will use, and some preliminary results.  In Section 3 we discuss $\q$-supervarieties and introduce the fundamental tools we need for our main theorem.  Section 4 contains the main theorem and its proof.  Section 5 discusses the first applications, to the computation of the supergroup $\widetilde{G_u}$.  Here we also prove an important criterion for identifying a supervariety as the odd tangent bundle of an even variety.  In Section 6 we discuss computations on certain homogeneous superspaces, in particular supersymmetric spaces.  Section 7, an appendix, computes $\widetilde{G_u}$ for a certain root subgroup of $D(2,1;\alpha)$.
	
	\subsection{Acknowledgments}  The authors are grateful to I.~Entova-Aizenbud, V.~Hinich, and D. Vaintrob for many helpful discussions.   The first author was supported in part by NSF grant 2001191 and by Tromso research foundation. The second author was partially supported by ISF grant 711/18, NSF-BSF grant 2019694, and by ARC grant DP210100251.
	
	\section{Supergeometric preliminaries}\label{supergeo_sec}
	
	We work throughout over an algebraically closed field $\Bbbk$ of characteristic zero.  For a super vector space $V$ we write $V=V_{\ol{0}}\oplus V_{\ol{1}}$ for its parity decomposition. If $v\in V$ is homogeneous, we write $\ol{v}\in\{\ol{0},\ol{1}\}=\Z/2\Z$ for its parity.  If $R$ is a supercommutative ring, then we denote by $R^{m|n}$ a free $R$-module of rank $m|n$.  If $M$ is a free $R$-module of finite rank $(m|n)$, then we may define $\Ber(M)$, the Berezinian of $M$, as in Sec.~3.3.7 of \cite{M}.  It is again a free $R$-module of rank $(1|0)$ if $n$ is even and rank $(0|1)$ if $n$ is odd.
	
	\subsection{Notation for supervarieties}\label{section_notation} We will use the symbols $X,Y,\dots$ for supervarieties with even subschemes $X_0,Y_0,\dots.$  For a supervariety $X$ we write $\OO_X$ for the structure sheaf and $\Bbbk[X]:=\Gamma(X,\OO_X)$ for the superalgebra of global functions on $X$.  We will be considering supervarieties in the sense of Def.~2.1 of \cite{sherman2021spherical}, however all spaces of interest will be smooth and affine.  A smooth affine supervariety is always given by the exterior algebra of a finite rank vector bundle on a smooth affine variety $X_0$ (see \cite{voronov1990elements}).  For several equivalent conditions to being smooth, which we will use at various points without elaboration, see the appendix of \cite{sherman2021spherical}.
	
	Note that affine supervarieties and morphisms between them are entirely determined by their algebras of global functions and maps between them, just as with affine varieties.  Further, we note that the space of global functions of an affine supervariety is finitely generated as a $\Bbbk$-superalgebra.  See \cite{carmeli2011mathematical} for more on the basics of algebraic supergeometry.
	
	We will employ the convention that for functions on a supervariety the symbols $t,s$ will denote even functions, $\xi,\eta$ odd functions, and $f,g$ functions of ambiguous parity.
	
	If $X$ is a supervariety, there is a canonical closed embedding $i_X:X_0\to X$ which is a homeomorphism of underlying topological spaces.  The closed points of $X$ are the $\Bbbk$-points, which we write as $X(\Bbbk)$, and they are canonically identified with the closed points of $X_0$ via $i_X$.  If $x$ is a closed point of $X$ and $\FF$ is coherent a sheaf on $X$, we write $\FF_x$ for the stalk of $\FF$ at $x$, and $\FF|_x$ for the fiber of $\FF$ at $x$.    We will write $\m_x$ for the maximal ideal of $\Bbbk[X]$ corresponding to $x$.  Then the cotangent space at $x$ is given by $T_x^*X:=\m_x/\m_x^2$, and the tangent space by $T_xX=\left(T_x^*X\right)^*$.
	
	A vector bundle on a supervariety $X$ is by definition a coherent sheaf $\VV$ which is locally isomorphic to $\OO_{X}^{m|n}$ for some $m,n\in\N$.  We define $\Ber(\VV)$, the Berezinian of $\VV$, to be the rank one vector bundle obtained by glueing together $\Ber(\VV|_{U})$ on open sets $U$ for which $\VV|_U$ is trivialized.
	
	Given a function $f\in\Bbbk[X]$, we will use the notation $D(f)$ of Chapt.~2.2 of \cite{hartshorne2013algebraic} for the open subvariety on which $f$ is non-vanishing.
	
	Finally, if $Y$ is a closed subvariety of $X$, we write $\II_Y$ for the ideal sheaf of $Y$ in $X$, and $\NN_{Y}^\vee=\II_Y/\II_Y^2$ for the conormal bundle of $Y$ in $X$.
	
	\subsection{Coordinates} Let $X$ be a smooth supervariety.  We say functions $t_1,\dots,t_m,\xi_1,\dots,\xi_n$ define a coordinate system on an open set $U$ in $X$ if they are regular on $U$ and $dt_1,\dots,dt_m,d\xi_1,\dots,d\xi_n$ trivialize the cotangent bundle on $U$.  In this case, the tangent bundle will be trivialized on $U$ by the sections $\d_{t_1},\dots\d_{t_m},\d_{\xi_1},\dots,\d_{\xi_n}$ which are derivations acting in the obvious way.  
	
	By general facts about sections of vector bundles, if we have functions $t_1,\dots,t_m,\xi_1,\dots,\xi_n$ which define linearly independent differentials at a point, then they must define linearly independent differentials in a neighborhood of that point; in particular they define a coordinate system on a neighborhood of said point.
	
	We will use the following lemma later on:
	\begin{lemma}\label{kers define splitting}
		Suppose that $X=\Spec A$ is a smooth affine supervariety and $\xi_{1},\dots,\xi_{n}\in A_{\ol{1}}$ are global odd coordinates on $X$.  Then the derivations $\d_{\xi_{1}},\dots,\d_{\xi_n}$ are well-defined, and if we set $A_0=\ker\d_{\xi_1}\cap\cdots\cap\d_{\xi_{n}}$, the natural map
		\[
		A_0[\xi_1,\dots,\xi_n]\to A
		\]
		is an isomorphism.
	\end{lemma}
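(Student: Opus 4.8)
The plan is to reduce the statement to the local normal form of a smooth supervariety equipped with a coordinate system, and then to glue. First I would make sense of $\partial_{\xi_i}$: since $d\xi_1,\dots,d\xi_n$ trivialize the odd part of the cotangent bundle, near every closed point the $\xi_i$ extend to a coordinate system $(t_1,\dots,t_m,\xi_1,\dots,\xi_n)$, and on such a chart $\partial_{\xi_i}$ is the corresponding coordinate vector field — an odd derivation with $\partial_{\xi_i}(\xi_j)=\delta_{ij}$, the family being pairwise anticommuting and squaring to zero. The content of the first assertion is that these locally defined operators glue to global derivations of $A$; I would check this by comparing two coordinate completions of the same global tuple $(\xi_i)$ relative to a splitting $\Bbbk[X_0]\hookrightarrow\Bbbk[X]$, which is available because a smooth affine supervariety is the exterior algebra of a vector bundle over the affine $X_0$.

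Next, the local picture. On a chart $U$ carrying a coordinate system extending the $\xi_i$, smoothness furnishes an isomorphism of sheaves of superalgebras $\OO_X|_U\cong\OO_{U_0}[\xi_1,\dots,\xi_n]$ under which $\partial_{\xi_i}|_U$ becomes the standard coordinate derivation; in particular $\bigcap_i\ker(\partial_{\xi_i}|_U)=\OO_{U_0}$. Therefore the subsheaf $\AA_0:=\bigcap_i\ker\bigl(\partial_{\xi_i}\colon\OO_X\to\OO_X\bigr)$ is locally the structure sheaf of $X_0$, so the composite $\AA_0\hookrightarrow\OO_X\twoheadrightarrow\OO_{X_0}$ is a local isomorphism, hence an isomorphism; taking global sections over the affine $X_0$ gives $A_0\cong\Bbbk[X_0]$. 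In particular $A_0$ is a finitely generated reduced $\Bbbk$-algebra whose maximal ideals are exactly the closed points of $X_0$.

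It remains to analyze the natural map $\Phi\colon A_0[\xi_1,\dots,\xi_n]=A_0\otimes_\Bbbk\Lambda(\xi_1,\dots,\xi_n)\to A$, a morphism of $A_0$-modules whose source is free of rank $2^n$. For injectivity, suppose $\sum_{S\subseteq\{1,\dots,n\}}a_S\xi_S=0$ with $a_S\in A_0$ and $\xi_S$ the ordered product; picking $S_0$ maximal in $\{S:a_S\ne0\}$ and applying $\prod_{i\in S_0}\partial_{\xi_i}$ — which annihilates $A_0$ and sends $\xi_S$ to $\pm\xi_{S\setminus S_0}$ if $S_0\subseteq S$ and to $0$ otherwise — isolates $\pm a_{S_0}$, forcing $a_{S_0}=0$, a contradiction. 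For surjectivity I would argue stalkwise over $X_0=\Spec A_0$: given a closed point $x$, choose an affine chart $U\ni x$ with a coordinate system extending the $\xi_i$; by the local picture $\Phi|_U$ is identified with the identity of $\OO_{U_0}[\xi_1,\dots,\xi_n]$, so $\Phi$ is an isomorphism over $U$ and $(\coker\Phi)_{\m_x}=0$. Since this holds at every closed point of $X_0$, we get $\coker\Phi=0$, and together with injectivity $\Phi$ is an isomorphism.

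I expect the only genuinely delicate point to be the interplay between the local normal form and the global statement — concretely, that the even subalgebras $\OO_{U_0}$ produced on different charts are all the common kernel of one fixed global family $\partial_{\xi_1},\dots,\partial_{\xi_n}$. This is exactly where the hypothesis that the $\xi_i$ are global is used; once it is secured, injectivity of $\Phi$ is the Euler-type computation above and surjectivity is a routine stalkwise check.
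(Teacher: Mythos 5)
Your plan hinges on two claims that are doing the real work, and the first of them is essentially the lemma itself. The "local picture" step asserts that on a chart $U$ carrying a coordinate system extending the $\xi_i$, smoothness gives an isomorphism $\OO_X|_U\cong\OO_{U_0}[\xi_1,\dots,\xi_n]$ under which the \emph{given} $\xi_i$ become the standard exterior generators and $\partial_{\xi_i}$ the standard coordinate derivations. What smoothness actually furnishes (Voronov--Manin--Penkov) is an isomorphism $\Bbbk[U]\cong\Bbbk[U_0]\otimes{\bigwedge}\langle\eta_1,\dots,\eta_n\rangle$ for \emph{some} odd generators $\eta_j$; the assertion that one may take the prescribed $\xi_i$ (which differ from a frame by an invertible matrix plus terms in ${\bigwedge}^{\geq 3}$) in that role is precisely the content of the statement being proved, restricted to $U$. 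Unless you prove this normal form independently (e.g.\ by the frame-change/nilpotent-filtration induction), the argument is circular, since a chart with a coordinate system extending the $\xi_i$ is itself a smooth affine supervariety with global odd coordinates. The second soft spot is the gluing of the $\partial_{\xi_i}$: the coordinate derivation attached to $\xi_i$ genuinely depends on the even completion (on $\A^{1|2}$, replacing $t$ by $t+\xi_1\xi_2$ changes $\partial_{\xi_1}$, since $\partial_{\xi_1}(t+\xi_1\xi_2)=\xi_2$), so the locally defined operators need not agree on overlaps. Your proposed fix --- drawing all even coordinates from one fixed splitting $\Bbbk[X_0]\hookrightarrow\Bbbk[X]$ --- can be made to work, but it requires verifying that the coordinate derivation then annihilates the whole chosen even subalgebra locally (not just the finitely many chosen $t_j$), which you flag but do not carry out.

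For comparison, the paper sidesteps both issues: the derivations are produced globally in one stroke by splitting the fiberwise-injective bundle map $\OO_X^{0|n}\to\Omega_X$ determined by $d\xi_1,\dots,d\xi_n$ and composing with $d$, so no chart-by-chart definition or gluing is needed; and both injectivity and surjectivity of $A_0[\xi_1,\dots,\xi_n]\to A$ are proved by a direct induction over subsets $I\sub\{1,\dots,n\}$, applying $\d_I=\prod_{i\in I}\d_{\xi_i}$ to strip off the leading term (your injectivity argument is exactly this; the same device also gives surjectivity, replacing your stalkwise appeal to the unproven local normal form). If you want to keep your local-to-global architecture, you must first prove the local normal form with the prescribed $\xi_i$ --- but the induction needed to do that is essentially the paper's global argument, so the chart-and-glue detour buys nothing here.
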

	
	\begin{proof}
		Because $X$ is smooth, the cotangent bundle $\Omega_{X}$ is locally free.  The global odd coordinates $\xi_1,\dots,\xi_n$ define a map $\OO_{X}^{0|n}\to\Omega_{X}$ which is injective on fibers, and thus splits because $X$ is affine.  This splitting provides for us our derivations $\d_{\xi_i}$ for all $i$.  Now let $A_0=\ker\d_{\xi_1}\cap\cdots\cap\d_{\xi_{n}}$, and consider the natural map
		\[
		A_0[\xi_1,\dots,\xi_n]\to A.
		\]
		To see this map is injective, suppose that $\sum\limits_{I}f_I\xi_I=0$ where $f_I\in A_0$.  Then if we apply $\d_I:=\prod_{i\in I}\d_{\xi_{i}}$ to $f$ we learn that $f_I=0$ for all $I$.  
		
		To show surjectivity, first order the subsets $I\sub\{1,\dots,n\}$ using a lexicographical ordering, meaning that $I=\{i_1,\dots,i_r\}<J=\{j_1,\dots,j_s\}$ whenever $|I|<|J|$, and if $|I|=|J|$ then  $I<J$ if for some $k$ we have $i_k<j_k$ while $i_\ell=j_\ell$ for $\ell<k$.  
		
		Now let $f\in A$ and let $I$ be the largest subset under this ordering such that $\d_I(f)\neq0$.  Then we must have that $\d_{\xi_i}(\d_I(f))=0$ for all $i$, i.e. $\d_I(f)=g_I\in A_0$.  Consider 
		\[
		f':=f-\d_I(\xi_I)g_I\xi_I
		\]
		Then $\d_I(f')=0$ by construction, and thus we may conclude by induction that $f'\in A_0[\xi_1,\dots,\xi_n]$, completing the proof.
	\end{proof}
	
 	\subsection{Subvarieties}  By abuse of language, we will refer to subsupervarieties as subvarieties.  If $Y\sub X$ is a closed subvariety of $X$, then we write $\II_Y$ for the ideal sheaf of $Y$, and $\NN_Y^\vee:=\II_Y/\II_Y^2$ for the conormal sheaf of $Y$.  Then  $\NN_Y^\vee$ is a coherent sheaf on $Y$, and if $Y$ is smooth then $\NN_Y^\vee$ is a vector bundle of rank given by the codimension of $Y$.  In this case, every point on $Y$ has an open neighborhood in $X$ where we may choose local functions $t_1,\dots,t_m,\xi_1,\dots,\xi_n\in\II_Y$ such that they will trivialize  $\NN_Y^\vee$, and they define linearly independent differentials.
	
	\section{\texorpdfstring{$\q$}{q}-supervarieties}
	
	We explain several lemmas and definitions we will use repeatedly for the proof of the general theorem.   
	\subsection{\texorpdfstring{$\q$}{q}-modules} Let $\q$ be the $(1|1)$-dimensional Lie superalgebra with basis $Q,Q^2:=\frac{1}{2}[Q,Q]$, where $Q$ is odd.  We will work in $\Rep_{Q^2}\q(1)$, i.e. the category of $\q$-modules (not necessarily finite-dimensional) on which $Q^2$ acts semisimply.  Thus when we refer to a $\q$-module we will always assume that $Q^2$ acts semisimply.  
	
	\begin{definition}[Duflo-Serganova functor]
		For a $\q$-module $V$ (recall that as explained above, this mean $Q^2$ acts semisimply), define $DS_QV$ to be the super vector space given by the cohomology of $Q$ when restricted to $V^{Q^2}$, the invariants (equivalently kernel) of $Q^2$ on $V$.   We will also write this occasionally using the shorthand $V_Q:=DS_{Q}V$.
	\end{definition}
	The Duflo-Serganova functor was originally introduced in \cite{duflo2005associated}, and defines a tensor functor from $\Rep_{Q^2}\q$ to the category of super vector spaces.  It is in fact a semisimplification functor of $\Rep_{Q^2}\q(1)$, although we will not use this.  For a more recent and thorough survey on this functor, we refer to \cite{GHSS}.  
	
	We recall the following results which will be used throughout.
	\begin{lemma}\label{q mod proj criteria} Let $V$ be a $\q$-module.  
		\begin{enumerate}
			\item 	$V$ is projective if and only if $DS_QV=0$;
			\item 	if $V$ admits a finite-length projective resolution, then $V$ is projective;
			\item 	if the restriction $Q:V_{\ol{1}}\to V_{\ol{0}}$ is an isomorphism, then $V$ is projective.
		\end{enumerate}
	\end{lemma}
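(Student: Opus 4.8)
The plan is to reduce the whole statement to the elementary module theory of the enveloping algebra $U(\q)$. Since $Q^{2}$ is central in $U(\q)$, any $\q$-module $V$ on which $Q^{2}$ acts semisimply splits functorially as $V=\bigoplus_{\lambda\in\Bbbk}V_{\lambda}$ into $Q^{2}$-eigenspaces, and because $Q$ commutes with $Q^{2}$ each $V_{\lambda}$ is a module over $R_{\lambda}:=U(\q)/(Q^{2}-\lambda)$, which has $\Bbbk$-basis $\{1,Q\}$. A $\q$-module map automatically preserves $Q^{2}$-eigenspaces, so this identifies $\Rep_{Q^{2}}\q$ with the product category $\prod_{\lambda\in\Bbbk}R_{\lambda}\text{-Mod}$; in such a product an object is projective iff each component is, and each projection functor is exact and preserves projectives (it has an exact right adjoint). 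Hence the problem is entirely local in $\lambda$: I only need to understand projectivity over each $R_{\lambda}$ and how $DS_{Q}$ sees each block. Note also that $V^{Q^{2}}=V_{0}$, so $DS_{Q}V=DS_{Q}V_{0}$.

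There are two cases. If $\lambda\neq0$ then $R_{\lambda}$ is a $\Z/2$-graded division algebra — every nonzero homogeneous element is a unit, as $(aQ)^{-1}=\lambda^{-1}a^{-1}Q$ — so every graded $R_{\lambda}$-module is free, hence projective, while $V_{\lambda}^{Q^{2}}=0$ so this block is invisible to $DS_{Q}$. If $\lambda=0$ then $R_{0}=U(\q)/(Q^{2})$ is the exterior algebra $\Lambda[Q]$ on one odd generator, a local algebra, and $DS_{Q}V=\ker(Q|_{V_{0}})/\operatorname{Im}(Q|_{V_{0}})$. Here I would establish the structure statement that an arbitrary $\Lambda[Q]$-module $M$ decomposes as a free module plus a sum of (parity shifts of) the trivial module: picking a graded complement $W$ of $\ker(Q|_{M})$, the map $Q\colon W\to\operatorname{Im}(Q|_{M})\subseteq\ker(Q|_{M})$ is an isomorphism, a homogeneous basis $\{w_{i}\}$ of $W$ yields a free submodule $\bigoplus_{i}\Lambda[Q]w_{i}=W\oplus\operatorname{Im}(Q|_{M})$, and any graded complement of $\operatorname{Im}(Q|_{M})$ inside $\ker(Q|_{M})$ is a complementary trivial submodule; in particular $M$ is free iff $\ker(Q|_{M})=\operatorname{Im}(Q|_{M})$, and the trivial part is canonically $\ker(Q|_{M})/\operatorname{Im}(Q|_{M})$.

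All three items are then immediate. For (1): by the case analysis, $V$ is projective iff $V_{0}$ is free over $\Lambda[Q]$, iff $\ker(Q|_{V_{0}})=\operatorname{Im}(Q|_{V_{0}})$, iff $DS_{Q}V=0$. For (3): since $Q$ respects the $Q^{2}$-grading, the hypothesis forces $Q\colon(V_{0})_{\ol1}\to(V_{0})_{\ol0}$ to be bijective, and then $Q^{2}=0$ on $V_{0}$ forces $Q|_{(V_{0})_{\ol0}}=0$, so $\ker(Q|_{V_{0}})=(V_{0})_{\ol0}=\operatorname{Im}(Q|_{V_{0}})$; thus $DS_{Q}V=0$ and (1) applies. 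For (2): restricting a finite projective resolution of $V$ to the $\lambda=0$ component (an exact, projectives-preserving functor) gives a finite \emph{free} resolution of $V_{0}$ over $\Lambda[Q]$; but the trivial module has infinite projective dimension over $\Lambda[Q]$, since its first syzygy $\ker(\Lambda[Q]\twoheadrightarrow\Bbbk)=\Bbbk Q$ is again isomorphic to the trivial module and is not free, and projective dimension cannot increase on passing to a direct summand. So by the structure statement $V_{0}$ has no trivial summand, i.e.\ $V_{0}$ is free and $V$ is projective. (Alternatively, (2) follows from (1) by dimension shifting along the resolution using the natural six-term periodic exact sequence that $DS_{Q}$ attaches to a short exact sequence.)

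I do not expect a serious conceptual obstacle; the one thing to watch is that $V$ may be infinite-dimensional, so the structure statement for $\Lambda[Q]$-modules and the behaviour of projective dimension under summands must be used without any finiteness hypothesis. Both are fine: the complement argument above is rank-free, projectivity over the local ring $\Lambda[Q]$ is equivalent to freeness for all modules (Kaplansky), and the syzygy computation for the trivial module is visibly independent of rank.
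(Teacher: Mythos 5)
Your proposal is correct, and in fact the paper never proves this lemma: it is stated with ``We recall the following results which will be used throughout,'' the argument being deferred to the Duflo--Serganova literature (e.g.\ the survey cited as GHSS22), so there is no in-paper proof to compare against. Your route --- splitting $\Rep_{Q^2}\q$ into eigenblocks over $R_\lambda=U(\q)/(Q^2-\lambda)$, noting that for $\lambda\neq0$ these are graded division algebras (so those blocks are automatically projective and invisible to $DS_Q$), and proving the free-plus-trivial structure theorem for modules over $R_0=\Lambda[Q]$ --- is essentially the standard argument, carried out correctly and, importantly, without finiteness assumptions, which is what the paper needs since it applies the lemma to $\Bbbk[X]$. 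All three items do follow as you say: (1) from ``projective over $\Lambda[Q]$ iff $\ker Q=\operatorname{Im}Q$ on $V^{Q^2}$,'' (3) by checking that the hypothesis forces $\ker Q=\operatorname{Im}Q$ on the zero block, and (2) by the syzygy/projective-dimension argument (or the six-term sequence). Two minor cosmetic points: the appeal to Kaplansky is avoidable, since a direct summand of a free $\Lambda[Q]$-module inherits $\ker Q=\operatorname{Im}Q$, which together with your structure statement already yields projective $\Rightarrow$ free; and the first syzygy of the trivial module is its parity shift $\Pi\Bbbk$ rather than $\Bbbk$ itself, which changes nothing in the projective-dimension argument.
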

	
	\subsection{\texorpdfstring{$\q$}{q}-supervarieties}\label{section_q_supervars}  
		We recall that on an affine supervariety $X=\Spec\Bbbk[X]$, a vector field is nothing but a derivation (in the super sense) of $\Bbbk[X]$.
	\begin{definition}		
	An affine $\q$-supervariety is a pair $(X,Q)$, where $X$ is an affine supervariety and $Q$ is an odd vector field on $X$ such that $Q^2$ acts semisimply on $\Bbbk[X]:=\Gamma(X,\OO_X)$.  In other words, $\Bbbk[X]$ is a $\q$-module where $Q$ acts by derivations. In the future we will simply write that $X$ is a $\q$-supervariety, supressing the $Q$.
	\end{definition}
	
    Let $X$ be a $\q$-supervariety.  Then since $\Bbbk[X]$ is finitely generated, the $\q$-action action necessarily integrates to the action of an algebraic supergroup $G_Q$ on $X$, where $(\operatorname{Lie}G_Q)_{\ol{1}}$ is spanned by $Q$, and $(G_Q)_0$ is a central torus (of some finite dimension).  Because of the ambiguity in this choice of torus depending on the representation, it is better to avoid fixing a lift to $G_Q$ and instead only doing so when it is needed in an argument. 
    
    \begin{definition}
    Let $X$ be a $\q$-supervariety.  We say that an open subvariety $U\sub X$ is $\q$-stable if $Q^2$ acts semisimply on $\Bbbk[U]$.  In this case $U$ is itself naturally a $\q$-supervariety.
    \end{definition}

	\begin{definition}
		Given a vector field $Q$ on $X$ and a point $x\in X(\Bbbk)$, we may evaluate $Q$ at $x$ to obtain a tangent vector $Q_x\in T_{x}X$.  We say $Q$ is non-vanishing at $x$ if $Q_x\neq0$, and otherwise say it is vanishing at $x$.  We say $Q$ is non-vanishing on $X$ if it is non-vanishing at every point $x\in X(\Bbbk)$. 	We write $Z:=Z(Q)$ for the closed subvariety of $X_0$ given by the vanishing set of $Q$.  
	\end{definition}
	
	\begin{remark}
		Observe that if $\II_Z$ denotes the ideal sheaf defined by $Z:=Z(Q)\sub X$, we have $Q(\OO_X)\sub\II_Z$.
	\end{remark}
	
	\subsubsection{Equivariant sheaves} Let $\FF$ be a quasi-coherent sheaf on a $\q$-supervariety $X$.  We say that $\FF$ is $\q$-equivariant if it admits an action of $Q$ such that $Q^2$ acts semisimply on global sections, and 
	\[
	Q(fv)=Q(f)v+(-1)^{\ol{f}}fQ(v),
	\]
	where $v$ is a section of $\FF$ and $f$ is a function.  
	
	\begin{lemma}\label{non_vanishing_lemma}
          Let $X$ be an affine $\q$-supervariety; then the following are equivalent:
          \begin{enumerate}
          	\item $Z(Q)=\emptyset$;
          	\item there exists $\xi\in\Bbbk[X]$ with $Q(\xi)=1$;
          	\item $DS_Q\Bbbk[X]=0$;
          	\item if $\FF$ is any quasi-coherent $\q$-equivariant sheaf on $X$, then $DS_Q\Gamma(X,\FF)=0$.
          \end{enumerate}  
	\end{lemma}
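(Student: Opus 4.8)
The plan is to close the cycle $(2)\Rightarrow(4)\Rightarrow(3)\Rightarrow(2)$, together with $(2)\Rightarrow(1)$ and $(1)\Rightarrow(3)$; all but the last are essentially formal. For $(4)\Rightarrow(3)$: $\OO_X$ is itself a $\q$-equivariant quasi-coherent sheaf with $\Gamma(X,\OO_X)=\C[X]$. For $(3)\Rightarrow(2)$: the constant $1$ is a $Q$-cocycle in $\C[X]^{Q^2}$, so it represents a class in $DS_Q\C[X]=0$, whence $1=Q\xi$ for some $\xi\in\C[X]^{Q^2}$. For $(2)\Rightarrow(1)$: if $Q\xi=1$ then $1\in Q(\OO_X)\subseteq\II_{Z(Q)}$, forcing $Z(Q)=\emptyset$. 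For $(2)\Rightarrow(4)$: $Q^2$ is an even derivation of $\C[X]$ acting compatibly on $\Gamma(X,\FF)$, and $Q^2(\xi)=Q(1)=0$, so multiplication by $\xi$ preserves $\Gamma(X,\FF)^{Q^2}$; there the Leibniz rule gives $Q(\xi s)+\xi Q(s)=s$, so multiplication by $\xi$ is a contracting homotopy for $(\Gamma(X,\FF)^{Q^2},Q)$ and $DS_Q\Gamma(X,\FF)=0$.

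For $(1)\Rightarrow(3)$ I would first pass to a linear model. Since $Z(Q)=\emptyset$, the functions $\{Q\xi:\xi\in\C[X]_{\ol 1}\}$ have empty common vanishing locus, so by the Nullstellensatz finitely many of them, $Qf_1,\dots,Qf_p$, already do. The $\q$-submodule $M\subseteq\C[X]$ generated by $f_1,\dots,f_p$ is finite-dimensional (each element of $\C[X]$ generates a finite-dimensional $\q$-submodule, since $\C[X]$ is a locally finite $T$-module with $T=(G_Q)_0$ chosen so that $\operatorname{Lie}T=\Bbbk Q^2$, and $Q^2$ acts by a scalar on each weight space while $Q$ has weight zero). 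The inclusion $M\hookrightarrow\C[X]$ induces a $\q$-equivariant morphism $\phi\colon X\to\A$ to the affine space with $\C[\A]=\Sym(M)$, carrying $Q$ to the linear vector field $\tilde Q$ on $\A$ given by the $Q$-action on $M$. One checks $\phi(X)\cap Z(\tilde Q)=\emptyset$: for $x\in X$, the tangent vector $\tilde Q(\phi(x))=d\phi_x(Q(x))$ pairs with the coordinate dual to $f_i$ to give $(Qf_i)(x)$, and this is nonzero for some $i$.

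On $\A$ the locus $Z(\tilde Q)$ is a linear subspace, cut out set-theoretically by the even $T$-weight linear forms vanishing on it; each such form $g$ equals $\pm\tilde Q\psi$ for a $T$-weight linear form $\psi$, necessarily odd, and since $\psi^2=0$ and $\tilde Q^2\psi$ is a scalar multiple of $\psi$, one gets $\tilde Q(\psi/g)=\pm1$ on $D(g)$. Pulling back along $\phi$, the supervariety $X$ is covered by the affine opens $D(\phi^{*}g)$ — each $\q$-stable, since $\phi^{*}g$ is a $T$-weight vector — on each of which $(2)$ holds; by quasi-compactness finitely many suffice. The Čech complex of $\OO_X$ for this cover is then a finite exact complex $0\to\C[X]\to\check C^{0}\to\cdots\to\check C^{N}\to 0$ of $\q$-modules (all intersections are again $D$ of a product of weight vectors, hence $\q$-stable, and the Čech differentials commute with $Q$), in which each $\check C^{p}$, $p\geq 0$, is a finite product of $\q$-modules satisfying $(2)$, hence with vanishing $DS_Q$ by the implication $(2)\Rightarrow(4)$ already proven, hence $\q$-projective by \cref{q mod proj criteria}. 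Splicing into short exact sequences and splitting inductively from the right (each surjection onto a projective splits) shows $\C[X]$ is a direct summand of $\check C^{0}$, hence $\q$-projective, so $DS_Q\C[X]=0$ by \cref{q mod proj criteria}; this is $(3)$.

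The substantive obstacle is exactly $(1)\Rightarrow(3)$: upgrading the pointwise non-vanishing of $Q$ to the global identity $Q\xi=1$ (equivalently, $\q$-projectivity of $\C[X]$). Two features demand care. First, solving $Q\xi=1$ locally is not purely formal when $Q^2\neq 0$; the reduction to the linear model $\A$ is what makes the local computation — where the identity $\psi^2=0$ does the work — go through. Second, the local-to-global passage must be carried out inside $\Rep_{Q^2}\q$, which is why one works with $T$-stable affine opens throughout (so every Čech term is genuinely an object of $\Rep_{Q^2}\q$), and why finiteness of the cover is invoked twice: for projectivity of the finite products $\check C^{p}$, and for the dimension-shifting argument to terminate.
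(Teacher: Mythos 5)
Your argument is correct, but your treatment of the essential implication differs from the paper's. The paper proves $(1)\Rightarrow(2)$ directly and globally: non-vanishing plus affineness give finitely many odd functions $\xi_i$ and even $g_i$ with $\sum g_iQ\xi_i=1$; setting $\eta=\sum g_i\xi_i$ one gets $Q\eta=1+r$ with $r\in(\Bbbk[X]_{\ol 1})^2$ nilpotent, and then passing to the $Q^2$-weight-zero component $\eta_0$ makes $\alpha=Q(\eta_0)=1+r_0$ a $Q$-closed unit, so $\xi=\eta_0/\alpha$ solves $Q\xi=1$ outright. You instead prove $(1)\Rightarrow(3)$ by a local-to-global argument: solve $Q\xi=1$ on a finite cover by $\q$-stable basic affine opens and then glue via an exact finite \v{C}ech complex whose terms are $\q$-projective by \cref{q mod proj criteria}, which in effect reproves \cref{point_neighborhood_redn} inline; this is sound (your dimension-shifting/splitting argument and the $\q$-stability of the opens $D$ of eigenfunctions are exactly the mechanisms the paper uses later), and it closes the equivalence since $(3)\Rightarrow(2)$ is formal. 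Two remarks. First, your detour through the linear model $\A=\Spec\Sym(M)$ is unnecessary: for an odd $Q^2$-eigenfunction $f$ with $(Qf)(x)\neq 0$, the identity $f^2=0$ already holds in $\Bbbk[X]$, so $Q\bigl(f/(Qf)\bigr)=1-f\,Q^2(f)/(Qf)^2=1$ directly on the $\q$-stable open $D(Qf)\subseteq X$; the fact that ``$\psi^2=0$ does the work'' is not special to the linear model, so the passage to $\A$ (and the finite-dimensionality of $M$, which anyway follows simply from semisimplicity of $Q^2$) can be deleted. Second, the trade-off: the paper's proof is shorter, stays global, and constructs the homotopy $\xi$ explicitly, whereas your route is heavier but fits the sheaf-theoretic machinery of Section 3 and would apply verbatim to equivariant quasi-coherent sheaves; on the other hand it only yields the existence of $\xi$ indirectly through $(3)\Rightarrow(2)$.
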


	\begin{proof}
		Clearly $(4)\Rightarrow(3)$, $(3)\Rightarrow (2)$, and $(2)\Rightarrow (1)$.  Further $(2)\Rightarrow (4)$ because if $s\in\Gamma(X,\FF)$ with $Q(s)=0$, then $Q(\xi s)=s$.
		
		Thus it suffices to show $(1)\Rightarrow(2)$. Since $Q$ is everywhere non-vanishing and $X$ is affine, for each closed point $x\in X(\Bbbk)$ there exists an odd function $\xi_x\in \Bbbk[X]$ such that $(Q(\xi_x))(x)\neq0$.  Thus the ideal generated by $\{Q(\xi_x)\}_{x\in X(\Bbbk)}$ is equal to $\Bbbk[X]$, so we may choose finitely many functions in this set, say $Q(\xi_1),\dots,Q(\xi_k)$, and even functions $g_1,\dots,g_k$ such that
		\[
		g_1Q(\xi_1)+\dots+g_kQ(\xi_k)=1.
		\] 
		Set $\eta=g_1\xi_1+\dots+g_k\xi_k$.  We have
		\[
		Q\eta=[g_1Q(\xi_1)+\dots+g_kQ(\xi_k)]+[Q(g_1)\xi_1+\dots+Q(g_k)\xi_k]=1+r
		\]
		where $r:=Q(g_1)\xi_1+\dots+Q(g_k)\xi_k$.  Since $Q(g_i)$ is odd, $r\in(\Bbbk[X]_{\ol{1}})^2$ and thus is nilpotent.  We may then write $\eta=\sum\limits_{\lambda}\eta_{\lambda}$ and $r=\sum\limits_{\lambda}r_{\lambda}$ according to the decomposition of $\Bbbk[X]$ into eigenspaces of $Q^2$.  Since $Q^2$ preserves $(\Bbbk[X]_{\ol{1}})^2$, we have $r_{\lambda}\in(\Bbbk[X]_{\ol{1}})^2$ for all $\lambda$.  Now we see that
		\[
		Q(\eta)=\sum\limits_{\lambda}Q(\eta_{\lambda})=1+\sum\limits_{\lambda}r_{\lambda}.
		\]
		Since $Q^2(1)=0$ and $[Q^2,Q]=0$, we find that $a:=Q(\eta_{0})=1+r_0$ is a unit in $\Bbbk[X]$.  Further, $Q(a)=Q^2(\eta_0)=0$, so if we set $\xi:=\eta_0/a$, we find that
		\[
		Q(\xi)=Q(\eta_0/a)=a/a=1.
		\]	
	\end{proof}

	The following lemma is one of the main technical inputs in the proof of the main theorem in the next section.
	\begin{lemma}\label{fixed_point_neighborhood}
		Let $\WW$ be a $\q$-equivariant vector bundle on a smooth affine $\q$-supervariety $X$, and suppose that $Q$ vanishes at $x\in X(\Bbbk)$.   Then there exists an affine open $\q$-stable neighborhood $U$ of $x$ such that we have a $Q^2$-equivariant isomorphism of sheaves 
		\[
		\WW|_{U}\cong\OO_U\otimes_{\Bbbk} \WW|_x.
		\]
		If we suppose further that $\WW\to\WW|_x$ admits a $\q$-splitting, then the isomorphism can be made to be $\q$-equivariant.  In particular in this case, $\Gamma(U,\WW)$ is a projective $\q$-module if $\WW|_x$ is.
	\end{lemma}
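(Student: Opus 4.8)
Here is how I would approach the proof.

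\medskip

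The plan is to trivialize $\WW$ near $x$ by lifting an eigenbasis of the fiber, and then to carve out a $\q$-stable affine neighbourhood on which the resulting comparison map is an isomorphism. The first observation is that $Q(x)=0$ means $Q(\Bbbk[X])\sub\m_x$, and hence also $Q^2(\Bbbk[X])=Q(Q(\Bbbk[X]))\sub Q(\m_x)\sub\m_x$; so $Q^2$ vanishes at $x$ too, $\m_x$ is $Q^2$-stable, and the evaluation $\operatorname{ev}_x\colon\Bbbk[X]\to\Bbbk$ is a map of $Q^2$-modules. Likewise $Q$ preserves $\m_x\Gamma(X,\WW)$ (since $Q(\m_x)\sub\m_x$), so the fiber $\WW|_x=\Gamma(X,\WW)/\m_x\Gamma(X,\WW)$ is naturally a $\q$-module, with $Q^2$ acting semisimply, being a quotient of $\Gamma(X,\WW)$.

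\medskip

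Next I would choose a homogeneous $Q^2$-eigenbasis $v_1,\dots,v_r$ of the finite-dimensional space $\WW|_x$ and lift each $v_i$ to a $Q^2$-eigenvector $\tilde v_i\in\Gamma(X,\WW)$ of the same eigenvalue; this is possible because $\Gamma(X,\WW)\to\WW|_x$ is $Q^2$-equivariant with both sides $Q^2$-semisimple. (In the situation of the second assertion one instead takes $\tilde v_i=\sigma(v_i)$ for the given $\q$-splitting $\sigma$; these are automatically $Q^2$-eigenvectors.) The rule $f\otimes v_i\mapsto f\tilde v_i$ defines an $\OO_X$-linear map $\Phi\colon\OO_X\otimes_\Bbbk\WW|_x\to\WW$, and the eigenvalue matching shows $\Phi$ respects the $Q^2$-actions; when $\tilde v_i=\sigma(v_i)$ the identical computation with $Q$ in place of $Q^2$ shows $\Phi$ is $\q$-equivariant. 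Since the $\tilde v_i$ reduce to the $v_i$ at $x$, $\Phi$ is the identity on the fiber at $x$, hence an isomorphism over $\OO_{X,x}$, so $\ker\Phi$ and $\coker\Phi$ are finitely generated $\Bbbk[X]$-modules whose supports avoid $x$.

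\medskip

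The crux is to shrink to a $\q$-stable affine open on which $\Phi$ is an isomorphism. Let $\a\sub\Bbbk[X]$ be the annihilator of $\ker\Phi\oplus\coker\Phi$; since these modules are $Q^2$-equivariant, $\a$ is stable under $Q^2$ and therefore decomposes into $Q^2$-eigencomponents. As $\a\not\sub\m_x$, pick $h\in\a$ with $h(x)\neq0$ and write $h=\sum_\lambda h_\lambda$ with $h_\lambda\in\a$ a $Q^2$-eigenvector; by the $Q^2$-equivariance of $\operatorname{ev}_x$ we have $h_\lambda(x)=0$ for $\lambda\neq0$, so $g:=h_0$ satisfies $g\in\a$, $g(x)\neq0$, and $Q^2(g)=0$. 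Put $U:=D(g)$: it is affine and contains $x$; it is $\q$-stable because localizing the $Q^2$-semisimple algebra $\Bbbk[X]$ at the $Q^2$-invariant element $g$ again yields a $Q^2$-semisimple algebra; and $g$ kills $\ker\Phi$ and $\coker\Phi$ after localization, so $\Phi_g\colon\OO_U\otimes_\Bbbk\WW|_x\xrightarrow{\ \sim\ }\WW|_U$ is the desired isomorphism, $Q^2$-equivariant in general and $\q$-equivariant once a $\q$-splitting is fixed. I expect this to be the only genuinely delicate point: the localizing function must simultaneously annihilate the obstruction modules, be nonvanishing at $x$, and be $Q^2$-invariant, and it is precisely the fact that $x$ is a fixed point — so that $\operatorname{ev}_x$ is $Q^2$-equivariant and $\a$ is $Q^2$-stable — that makes all three demands compatible.

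\medskip

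Finally, when a $\q$-splitting is present the isomorphism gives $\Gamma(U,\WW)\cong\Bbbk[U]\otimes_\Bbbk\WW|_x$ as $\q$-modules, and $\sigma$ together with $\operatorname{ev}_x$ moreover exhibits $\WW|_x$ as a $\q$-module direct summand of $\Gamma(U,\WW)$. If $\WW|_x$ is projective then $DS_Q\WW|_x=0$, so $DS_Q\Gamma(U,\WW)=DS_Q(\Bbbk[U])\otimes DS_Q(\WW|_x)=0$ and $\Gamma(U,\WW)$ is projective by \cref{q mod proj criteria}; conversely a $\q$-direct summand of a projective is projective. (Alternatively, since $x\in Z(Q)\cap U$, \cref{non_vanishing_lemma} gives $DS_Q\Bbbk[U]\neq0$, and the equivalence then follows at once from the tensor property of $DS_Q$.)
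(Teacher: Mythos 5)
Your argument is correct, and its first half---lifting a homogeneous $Q^2$-eigenbasis of $\WW|_x$ (or using the given $\q$-splitting) to global eigen-sections and forming the comparison map $\Phi\colon\OO_X\otimes_\Bbbk\WW|_x\to\WW$---is exactly the paper's starting point. Where you diverge is in how the $\q$-stable affine neighbourhood on which $\Phi$ is invertible is produced. The paper takes $U$ to be the maximal open set on which the lifted sections trivialize $\WW$, observing that $U$ is the complement of the divisor of a rational $Q^2$-eigensection of $\Ber(\WW)$; affineness of $U$ and semisimplicity of $Q^2$ on $\Bbbk[U]$ then come directly from that description. You instead run a commutative-algebra argument: $\ker\Phi$ and $\coker\Phi$ are finitely generated with $x$ outside their supports, their annihilator is a $Q^2$-stable ideal not contained in $\m_x$, and---because $Q$, hence $Q^2$, vanishes at $x$, so evaluation at $x$ kills every nonzero eigencomponent---you can extract a $Q^2$-invariant (even) function $g$ in the annihilator with $g(x)\neq0$ and set $U=D(g)$. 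Both routes hinge on the same two points (eigen-lifts exist; the fixed-point condition at $x$ reconciles $Q^2$-invariance of the localizing function with its non-vanishing at $x$), but yours avoids Berezinians and divisors at the cost of a somewhat longer Nakayama/annihilator discussion, and it yields an explicit principal open rather than the canonical maximal trivializing open. Your handling of the final equivalence---via the tensor property of $DS_Q$, \cref{q mod proj criteria}, the direct-summand observation, and the remark that $DS_Q\Bbbk[U]\neq0$ because $x\in Z(Q)\cap U$ (\cref{non_vanishing_lemma})---is also correct, and in fact more explicit than the paper, which leaves that last assertion implicit.
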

	
	\begin{proof}
		Since $Q^2$ acts semisimply we may split $\Gamma(X,\WW)\to\WW|_x$ as a $Q^2$-module in any case; choose such a splitting, and if a $\q$-equivariant splitting exists then choose the splitting to be $\q$-equivariant.  Then we have a homogeneous basis $v_1,\dots,v_m,w_1,\dots,w_n$ of our assumed splitting, i.e. coming from the image of $\WW|_x$ under the splitting map, such that each $v_i,w_j$ is a global section and a $Q^2$-eigenvector.
		
		Let $U$ be the maximal open subset on which $v_1,\dots,v_m,w_1,\dots,w_n$ trivialize $\WW$; then $U$ is affine, as it is the complement of the divisor determined by a rational, $Q^2$-eigensection of $\Ber(\WW)$ (see Section \ref{section_notation} for notation).  Since the rational section is a $Q^2$-eigenvector, $\Bbbk[U]$ will admit a semisimple action of $Q^2$.  Finally, since $\WW$ is trivialized on $U$ by these sections, the natural map $\OO_U\otimes_{\Bbbk}\WW|_x\to\WW|_U$ is an isomorphism.
	\end{proof}
	
	\begin{lemma}\label{point_neighborhood_redn}
		Suppose that $\FF$ is a $\q$-equivariant quasicoherent sheaf such that every point $x\in X(\Bbbk)$ has a $\q$-stable, affine open  neighborhood $U$ such that for every $\q$-stable affine open $V\sub U$, we have that $\Gamma(V,\FF)$ is projective over $\q$.  Then $\Gamma(X,\FF)$ is a projective $\q$-module.
	\end{lemma}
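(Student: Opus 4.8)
The plan is to carry out a \v{C}ech argument for a finite affine cover of $X$ by the neighborhoods supplied by the hypothesis, and then to propagate projectivity down the resulting finite exact sequence of $\q$-modules.

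Since $X$ is affine it is quasi-compact, so I would first choose finitely many closed points $x_1,\dots,x_k$ whose associated neighborhoods $U_1,\dots,U_k$ cover $X$; here each $U_i$ is a $\q$-stable affine open containing $x_i$ with the property that $\Gamma(V,\FF)$ is a projective $\q$-module for every $\q$-stable affine open $V\sub U_i$. The step I expect to require the most care is the claim that each finite intersection $U_I:=\bigcap_{i\in I}U_i$ is again a $\q$-stable affine open subvariety of $X$. Affineness is automatic because $X$, being affine, is separated. For $\q$-stability one uses that $\Bbbk[U_I]$ is a $Q^2$-equivariant quotient of the tensor product $\bigotimes_{i\in I}\Bbbk[U_i]$ over $\Bbbk$: products of $Q^2$-eigenvectors taken from the various factors span $\Bbbk[U_I]$, and such a product is again a $Q^2$-eigenvector, with eigenvalue the sum of the constituent eigenvalues (as $Q^2$ is an even derivation). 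Hence $\Bbbk[U_I]$ is spanned by $Q^2$-eigenvectors, so $Q^2$ acts semisimply on it. Because $U_I\sub U_{i_0}$ for any $i_0\in I$, the defining property of $U_{i_0}$ now gives that $\Gamma(U_I,\FF)$ is a projective $\q$-module.

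Next I would form the \v{C}ech complex of $\FF$ relative to the cover $\{U_i\}_{i=1}^{k}$,
\[
0\longrightarrow\Gamma(X,\FF)\longrightarrow\bigoplus_{i}\Gamma(U_i,\FF)\longrightarrow\bigoplus_{i<j}\Gamma(U_i\cap U_j,\FF)\longrightarrow\cdots\longrightarrow\Gamma(U_1\cap\cdots\cap U_k,\FF)\longrightarrow 0 .
\]
Its differentials are alternating sums of restriction maps, hence $Q$-equivariant (because $\FF$ is $\q$-equivariant) and in particular commuting with $Q^2$; each term in positive \v{C}ech degree is a finite direct sum of modules of the form $\Gamma(U_I,\FF)$, hence a projective $\q$-module by the previous paragraph; and the complex is bounded. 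It is exact: since every $U_I$ is affine, the \v{C}ech complex computes the cohomology of the quasi-coherent sheaf $\FF$, which vanishes in positive degrees because $X$ is affine and equals $\Gamma(X,\FF)$ in degree zero. (One may run this on the underlying even scheme $X_0$, which carries the same topology and over which $\OO_X$ is a quasi-coherent algebra.)

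Finally I would split this bounded exact complex of objects of $\Rep_{Q^2}\q$ into short exact sequences $0\to B^{p-1}\to C^p\to B^p\to0$, where $C^p$ denotes the degree-$p$ term, $B^{-1}=\Gamma(X,\FF)$, and $B^p$ is the image of $C^{p-1}\to C^p$, equivalently the kernel of $C^p\to C^{p+1}$; each $B^p$ is a $Q^2$-stable submodule of a projective $\q$-module, so it again lies in $\Rep_{Q^2}\q$. The top term equals $B^{\mathrm{top}}=C^{\mathrm{top}}$ and is projective, and a descending induction now shows each $B^p$ is projective: whenever $B^p$ is projective the sequence $0\to B^{p-1}\to C^p\to B^p\to0$ splits, exhibiting $B^{p-1}$ as a direct summand of the projective module $C^p$. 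Taking $p=0$ yields that $\Gamma(X,\FF)=B^{-1}$ is a projective $\q$-module, equivalently that $DS_Q\Gamma(X,\FF)=0$ by \cref{q mod proj criteria}. Apart from the claim that finite intersections of $\q$-stable affine opens stay $\q$-stable, the argument is standard \v{C}ech theory together with the fact that every short exact sequence of $\q$-modules with projective cokernel splits.
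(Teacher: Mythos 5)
Your proof is correct and follows essentially the same route as the paper: a finite $\q$-stable affine \v{C}ech cover, exactness of the resulting bounded \v{C}ech complex because $X$ is affine, and propagation of projectivity to $\Gamma(X,\FF)$ (the paper cites part (2) of \cref{q mod proj criteria} where you run the splitting induction by hand, and you additionally spell out the $\q$-stability of intersections, which the paper leaves implicit). Only a cosmetic remark: with your definition of $B^p$ the short exact sequences should read $0\to B^p\to C^p\to B^{p+1}\to 0$, but this indexing slip does not affect the argument.
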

	
	\begin{proof}
		Using our assumptions we may find a $\q$-stable affine Cech cover $\{U_i\}$ of $X$ such that $\Gamma(U_{i_1}\cap\cdots\cap U_{i_k},\FF)$ is projective for all $i$.  Since $X$ is affine, the Cech complex 
		\[
		0\to \Gamma(X,\FF)\to \prod\limits_{i}\Gamma(U_i,\FF)\to\prod\limits_{i,j}\Gamma(U_i\cap U_j,\FF)\to\cdots
		\]
		will be exact and finite, so we can conclude by (2) of Lemma \ref{q mod proj criteria}.
	\end{proof}

	\begin{cor}\label{cor_vec_bndl}
	Suppose that $\WW$ is a $\q$-equivariant vector bundle on a smooth affine $\q$-supervariety $X$ such that at every point $x\in Z(Q)$, $\WW|_x$ is a projective $\q$-module.  Then $\Gamma(X,\WW)$ is a projective $\q$ module.
	\end{cor}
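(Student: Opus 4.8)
The plan is to reduce the statement to \cref{point_neighborhood_redn} applied to $\FF=\WW$: it suffices to show that every closed point $x\in X(\Bbbk)$ admits a $\q$-stable affine open neighborhood $U$ such that $\Gamma(V,\WW)$ is projective over $\q$ for every $\q$-stable affine open $V\subseteq U$. I would treat the two cases $x\notin Z(Q)$ and $x\in Z(Q)$ separately, the first invoking \cref{non_vanishing_lemma} and the second \cref{fixed_point_neighborhood}.

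Suppose first that $x\notin Z(Q)$. The goal is to exhibit a $\q$-stable affine open neighborhood $U$ of $x$ on which $Q$ is still non-vanishing, because then for any $\q$-stable affine open $V\subseteq U$ one has $Z(Q)\cap V=\emptyset$, and \cref{non_vanishing_lemma} (implication $(1)\Rightarrow(4)$) applied to the $\q$-equivariant sheaf $\WW|_V$ on $V$ gives $DS_Q\Gamma(V,\WW)=0$, hence projectivity by \cref{q mod proj criteria}(1). The one point requiring care --- and the place where semisimplicity of $Q^2$ is essential --- is making $U$ itself $\q$-stable. Here I would argue as follows: since $Q(x)\neq0$ there is an odd function $\xi$ with $(Q\xi)(x)\neq0$; writing $\xi=\sum_\lambda\xi_\lambda$ for its decomposition into $Q^2$-eigenvectors, some $t:=Q(\xi_\lambda)$ still satisfies $t(x)\neq0$. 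Then $t$ is an even $Q^2$-eigenfunction, and $t\in\II_Z$ since $Q(\OO_X)\subseteq\II_Z$ (with $Z=Z(Q)$); taking $U=D(t)$ works, because inverting the eigenvector $t$ keeps $Q^2$ semisimple on $\Bbbk[U]$, while $Z\subseteq V(t)$ forces $U\cap Z=\emptyset$.

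Now suppose $x\in Z(Q)$, so that $\WW|_x$ is projective over $\q$ by hypothesis. Since $Q$ vanishes at $x$ it preserves $\m_x$, hence $\m_x\Gamma(X,\WW)$, so the evaluation map $\Gamma(X,\WW)\to\WW|_x$ is a surjection of $\q$-modules; as $\WW|_x$ is projective, this surjection admits a $\q$-equivariant splitting. This is precisely the extra hypothesis in the last part of \cref{fixed_point_neighborhood}, which then provides a $\q$-stable affine neighborhood $U$ of $x$ together with a $\q$-equivariant isomorphism $\WW|_U\cong\OO_U\otimes_\Bbbk\WW|_x$. For any $\q$-stable affine open $V\subseteq U$, restricting and taking global sections yields $\Gamma(V,\WW)\cong\Bbbk[V]\otimes_\Bbbk\WW|_x$ as $\q$-modules, with the tensor-product action. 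Since $DS_Q$ is a tensor functor and $DS_Q(\WW|_x)=0$ by \cref{q mod proj criteria}(1), we get $DS_Q\Gamma(V,\WW)\cong DS_Q(\Bbbk[V])\otimes DS_Q(\WW|_x)=0$, so $\Gamma(V,\WW)$ is projective, again by \cref{q mod proj criteria}(1). This verifies the hypothesis of \cref{point_neighborhood_redn} in all cases.

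I expect the only genuine obstacle to be the construction in the non-vanishing case: one must arrange that a neighborhood of $x$ on which $Q$ does not vanish is cut out by a single $Q^2$-\emph{eigen}function, so that it is $\q$-stable --- the projective case is essentially formal, since any surjection onto a projective splits. A routine technical point to keep in mind throughout is that the restriction of a $\q$-equivariant (quasicoherent) sheaf to a $\q$-stable affine open is again $\q$-equivariant, since the relevant localizations preserve semisimplicity of $Q^2$; this is what legitimizes feeding $\WW|_V$ into \cref{non_vanishing_lemma} and \cref{fixed_point_neighborhood}.
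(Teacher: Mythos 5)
Your proposal is correct and follows essentially the same route as the paper: reduce to \cref{point_neighborhood_redn}, handle points off $Z(Q)$ via a $Q^2$-eigenfunction in $\II_Z$ and \cref{non_vanishing_lemma}, and handle points of $Z(Q)$ via \cref{fixed_point_neighborhood}. Your explicit observation that projectivity of $\WW|_x$ forces the $\q$-splitting of the (equivariant, since $Q$ vanishes at $x$) evaluation map $\Gamma(X,\WW)\to\WW|_x$ is exactly the detail the paper leaves implicit when invoking that lemma.
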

	\begin{proof}
		We apply \cref{point_neighborhood_redn}.  If $x\in X(\Bbbk)\setminus Z(Q)$, then we may find a $\q$-stable, affine open neighborhood $U$ of $x$ such that $Q$ is nonvanishing on $U$.  Thus by \cref{non_vanishing_lemma}, $DS_Q\Gamma(V,\WW)=0$ for any $\q$-stable affine open $V\sub U$. 
		
		If on the other hand $x\in Z(Q)$, then by \cref{fixed_point_neighborhood} there exists a $\q$-stable, affine open neighborhood $U$ of $x$ such that $\WW|_U\cong \OO_U\otimes\WW|_x$.  Therefore, for any $\q$-stable affine open $V\sub U$ we will have $\WW|_V\cong\OO_V\otimes\WW|_x$, so since $\WW|_x$ is projective, $\Gamma(V,\WW)$ is also projective, and we are done.
	\end{proof}

	In the following example we show that the converse of \cref{cor_vec_bndl} fails.
	\begin{example}
		Let $X=\A^{1|1}$ and consider it with a trivial action of $\q$; in particular $Z(Q)=X(\Bbbk)$.  Present $\Bbbk[X]=\Bbbk[t,\xi]$, and give $\OO_X$ the structure of a $\q$-equivariant sheaf via $Q(f)=\xi f$.  Then we have $DS_Q\Gamma(X,\OO_X)=0$, even though the
		fibers of this vector bundle are not projective as $\q$-modules.
	\end{example}

	\section{Main theorem}
	
	\begin{thm}\label{main_thm}
		Let $\VV$ be a $\q$-equivariant vector bundle on a smooth affine $\q$-supervariety $X$.  Write $Z=Z(Q)\sub X_0$ for the vanishing subvariety of $Q$, and suppose that $Y$ is a smooth, closed subvariety of $X$ such that:
		\begin{enumerate}
			\item $Z(\Bbbk)\sub Y(\Bbbk)$;
			\item $Q(\II_Y)\sub\II_Y$;
			\item $Q:(\NN_Y^\vee|_z)_{\ol{1}}\to(\NN_Y^\vee|_{z})_{\ol{0}}$ is an isomorphism for all $z\in Z(\Bbbk)$.
			\item For all points $z\in Z(\Bbbk)$, there exists a $\q$-stable affine open $U_z$ of $z$ such that the map $\Gamma(U_z,\VV)\to\VV|_{z}$ splits over $\q$.
		\end{enumerate}
		Then $Y$ is naturally a $\q$-supervariety, $\VV|_Y$ is a $\q$-equivariant vector bundle on $Y$, and we have: 
		\[
		DS_Q\Gamma(X,\VV)=DS_Q\Gamma(Y,\VV|_{Y}).
		\]
		Further this identification is induced by the natural map $\Gamma(X,\VV)\to \Gamma(Y,\VV|_Y)$.
	\end{thm}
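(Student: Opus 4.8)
The plan is to reduce the theorem to a vanishing statement via a long exact sequence and then prove that vanishing by a local Koszul‑complex computation. First, condition (2) makes $\Gamma(X,\II_Y)\subseteq\Bbbk[X]$ a $Q$‑ and $Q^2$‑stable subspace, so $\Bbbk[Y]=\Bbbk[X]/\Gamma(X,\II_Y)$ is a $\q$‑module and $\VV|_Y=\VV/\II_Y\VV$ is $\q$‑equivariant. One then has a short exact sequence of $\q$‑equivariant sheaves $0\to\II_Y\VV\to\VV\to\VV|_Y\to0$; since $X$ is affine, global sections are exact, giving a short exact sequence of $\q$‑modules. Because $V\mapsto V^{Q^2}$ is exact (it is projection onto the $0$‑eigenspace of a semisimple operator) and $DS_Q$ then takes cohomology of the odd differential $Q$ on these spaces, one obtains a six‑term exact sequence of super vector spaces. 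Hence the natural map $DS_Q\Gamma(X,\VV)\to DS_Q\Gamma(Y,\VV|_Y)$ is an isomorphism as soon as $DS_Q\Gamma(X,\II_Y\VV)=0$, i.e., by \cref{q mod proj criteria}(1), as soon as $\Gamma(X,\II_Y\VV)$ is a projective $\q$‑module. The whole theorem thus reduces to this projectivity.

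I would prove it locally, in the spirit of \cref{point_neighborhood_redn}: it suffices to cover $X$ by finitely many $\q$‑stable affine opens, closed under pairwise intersection (for instance complements of zero sets of $Q^2$‑eigenfunctions), on each of which $\Gamma(-,\II_Y\VV)$ is $\q$‑projective. Since $Z(\Bbbk)\subseteq Y(\Bbbk)$, every point of $X$ off $Y$ lies off $Z$; at such a point, and more generally at any point of $X\setminus Z$, one finds as in the proof of \cref{cor_vec_bndl} a $\q$‑stable affine open on which $Q$ is non‑vanishing, so $DS_Q$ of any $\q$‑equivariant quasi‑coherent sheaf there vanishes by \cref{non_vanishing_lemma}. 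Near a point $z\in Z$, condition (4) together with \cref{fixed_point_neighborhood} (applied inside the open $U_z$ of (4), with $\WW=\VV|_{U_z}$) gives a $\q$‑stable affine open $U$ with a $\q$‑equivariant isomorphism $\VV|_U\cong\OO_U\otimes_\Bbbk\VV|_z$; hence for every $\q$‑stable affine $V\subseteq U$ one has $\Gamma(V,\II_Y\VV)\cong\Gamma(V,\II_Y)\otimes_\Bbbk\VV|_z$ as $\q$‑modules, and since $DS_Q$ is a tensor functor it suffices to show $\Gamma(V,\II_Y)$ is $\q$‑projective. We are reduced to the case $\VV=\OO_X$.

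For this case I would use (2) and (3) to bring $Q$ into Koszul form near $z$. Using semisimplicity of $Q^2$, choose odd $Q^2$‑eigenfunctions $\xi_1,\dots,\xi_n\in\II_Y$ lifting an eigenbasis of $(\NN_Y^\vee|_z)_{\ol{1}}$; then $\tau_j:=Q\xi_j$ lie in $\II_Y$ by (2), are even $Q^2$‑eigenfunctions, and condition (3) says precisely that their images form a basis of $(\NN_Y^\vee|_z)_{\ol{0}}$. Therefore $\tau_1,\dots,\tau_n,\xi_1,\dots,\xi_n$ generate $\II_Y$ near $z$, have independent differentials, and extend to a $Q^2$‑eigen‑coordinate system whose remaining coordinates restrict to coordinates on $Y$. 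On such a $\q$‑stable affine $U$, \cref{kers define splitting} identifies $\Bbbk[U]$ with an exterior algebra on the odd coordinates over an even ring $\Bbbk[U_0]$, and $\tau_1,\dots,\tau_n$ is a regular sequence there, being part of a coordinate system on the smooth even variety $U_0$. Grading $\Bbbk[U]$ by degree in the $\xi_j$, the differential $Q$ on $\Bbbk[U]^{Q^2}$ splits as $Q_{-1}+Q_0+Q_{+1}$, where $Q_{-1}=\sum_j\tau_j\d_{\xi_j}$ is the Koszul differential of $(\tau_1,\dots,\tau_n)$, $Q_{+1}=\sum_j\mu_j\xi_j\d_{\tau_j}$ comes from $Q\tau_j=Q^2\xi_j=\mu_j\xi_j$, and $Q_0$ is the part of $Q$ tangent to $Y$. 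The spectral sequence of this filtration has $E_1=H(Q_{-1})=\Bbbk[Y\cap U]$ concentrated in $\xi$‑degree $0$, so it degenerates and $DS_Q\Bbbk[U]=\Bbbk[Y\cap U]$; running the same computation on $\Gamma(U,\II_Y)$, which agrees with $\Bbbk[U]$ in positive $\xi$‑degree and, in degree $0$, equals the image of $Q_{-1}$ from degree $1$, yields $E_1=0$, hence $DS_Q\Gamma(U,\II_Y)=0$. The adapted coordinates restrict to $\q$‑stable affine sub‑opens, so the same holds there, completing the reduction.

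The main obstacle is the last step: rigorously identifying the restriction of $Q$ with a Koszul differential when $Q^2\neq0$ and when $Q$ has a component tangent to $Y$. Passing to $Q^2$‑invariants turns $\Bbbk[X]$ into an honest $\Z/2$‑graded complex but obscures the homological (Koszul) grading, and the tangential term $Q_0$ together with the twist $Q_{+1}$ must be controlled — via the spectral sequence of the $\xi$‑grading, or equivalently the filtration by powers of $\II_Y$, whose associated graded is the untwisted Koszul differential on $\Sym^\bullet\NN_Y^\vee\otimes\OO_Y$‑modules. One must then check that the cohomology itself, and not merely its associated graded, vanishes; this is where, as indicated in the introduction, the structure of the Koszul complex of a regular sequence on a regular ring — in particular its finiteness over the even base ring, which removes any need for completion — does the work.
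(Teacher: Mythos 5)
Your proposal is correct and follows essentially the same route as the paper: reduce via the short exact sequence to showing $DS_Q$ vanishes on sections vanishing along $Y$, localize to $\q$-stable affine opens (using \cref{non_vanishing_lemma} off $Z$ and hypothesis (4) with \cref{fixed_point_neighborhood} to reduce to $\VV=\OO_X$), then build $Q^2$-eigen coordinates adapted to $\II_Y$ so that the lowest $\xi$-degree piece of $Q$ is the Koszul differential of a regular sequence, with your spectral-sequence degeneration playing the role of the paper's \cref{lemma_technical}. The only minor inaccuracies — the decomposition of $Q$ by $\xi$-degree generally has components $Q_{-1},Q_0,\dots,Q_n$ (the coefficients of the tangential derivations can have any $\xi$-degree), not just $Q_{-1}+Q_0+Q_{+1}$, and one must still shrink to a $Q^2$-stable affine open on which $(\tau_1,\dots,\tau_n,\xi_1,\dots,\xi_n)$ cuts out exactly $Y$ (discarding the extra components of its zero set, as the paper does by inverting a $Q^2$-eigenfunction) — do not affect the argument, since only boundedness of the grading and the identification of the lowest piece matter.
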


	\subsection{Corollaries of Theorem \ref{main_thm}} Note that if $\VV=\OO_X$ is the trivial vector bundle with the canonical $\q$-equivariant structure, then condition (4) always holds.  Thus we obtain:
	\begin{cor}
		Under hypotheses (1)-(3) of Theorem \ref{main_thm}, we have that the restriction map $\Bbbk[X]\to \Bbbk[Y]$ induces an identification:
		\[
		DS_Q \Bbbk[X]=DS_Q \Bbbk[Y].
		\]
	\end{cor}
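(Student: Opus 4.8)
The plan is to deduce this corollary directly from \cref{main_thm} by verifying that when $\VV=\OO_X$, equipped with its canonical $\q$-equivariant structure $Q(fs)=Q(f)s+(-1)^{\ol f}fQ(s)$ (so sections are just functions and $Q$ acts on them as the given vector field), hypothesis (4) of the theorem is automatic. Once this is checked, all four hypotheses (1)--(4) of \cref{main_thm} hold under the assumption of hypotheses (1)--(3), and the theorem yields $DS_Q\Gamma(X,\OO_X)=DS_Q\Gamma(Y,\OO_X|_Y)$, that is, $DS_Q\Bbbk[X]=DS_Q\Bbbk[Y]$, with the identification induced by restriction $\Bbbk[X]\to\Bbbk[Y]$ as claimed.

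The only substantive point is therefore hypothesis (4): for each $z\in Z(\Bbbk)$ we must produce a $\q$-stable affine open $U_z\ni z$ such that $\Gamma(U_z,\OO_X)\to\OO_X|_z$ splits over $\q$. Here $\OO_X|_z=\Bbbk[X]/\m_z$ is the one-dimensional (even) fiber, and the restriction map $\Bbbk[U_z]\to\Bbbk$ is evaluation at $z$. A $\q$-equivariant splitting is just the inclusion of scalars $\Bbbk\hookrightarrow\Bbbk[U_z]$, $c\mapsto c\cdot 1$: it is a ring (hence $\Bbbk$-linear) section of evaluation at $z$, and it is $\q$-equivariant because $Q(1)=0$ and $Q^2(1)=0$, so $\Bbbk\cdot 1$ is a trivial $\q$-submodule mapping isomorphically onto the fiber. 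Thus any $\q$-stable affine open containing $z$ works — for instance $X$ itself — and (4) holds with no further hypotheses, which is exactly the observation recorded in the sentence preceding the corollary.

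I would then simply invoke \cref{main_thm}. It remains only to match notation: $\VV|_Y$ for $\VV=\OO_X$ is $\OO_Y$, whose global sections are $\Bbbk[Y]$; the theorem's conclusion $DS_Q\Gamma(X,\VV)=DS_Q\Gamma(Y,\VV|_Y)$ becomes $DS_Q\Bbbk[X]=DS_Q\Bbbk[Y]$; and the theorem's final sentence, that the identification is induced by the natural map $\Gamma(X,\VV)\to\Gamma(Y,\VV|_Y)$, becomes the assertion that it is induced by the restriction homomorphism $\Bbbk[X]\to\Bbbk[Y]$. There is no genuine obstacle here — the corollary is a direct specialization — so the write-up is essentially the verification of hypothesis (4) above together with this translation of notation.
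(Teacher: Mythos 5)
Your proposal is correct and matches the paper's argument: the corollary is obtained by specializing \cref{main_thm} to $\VV=\OO_X$, noting that hypothesis (4) holds automatically since the inclusion of constants $\Bbbk\cdot 1\hookrightarrow\Bbbk[U_z]$ gives a $\q$-equivariant splitting of evaluation at $z$ (because $Q(1)=Q^2(1)=0$). Your write-up simply makes explicit the observation the paper records in the sentence preceding the corollary.
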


	\begin{cor}\label{cor_z=y}
		Suppose that in the hypotheses of Theorem \ref{main_thm} we have  $Q(\OO_X)\sub\II_Y$.  Then the restriction map $\Bbbk[X]\to \Bbbk[Y]$ induces an identification
		\[
		DS_Q\Bbbk[X]=\Bbbk[Y].
		\]
		Further, suppose that $\VV$ is a $\q$-equivariant vector bundle on $X$ satisfying (4) of \cref{main_thm}.  Then $DS_Q\Gamma(X,\VV)$ will be the sections of a vector bundle on $Y$ with fibers isomorphic to $DS_Q(\VV|_y)$.
	\end{cor}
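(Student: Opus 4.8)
The plan is to bootstrap both assertions from \cref{main_thm}, using the single extra observation that $Q(\OO_X)\subseteq\II_Y$ forces $Q$ to act $\OO_Y$-linearly on everything defined over $Y$. First, for the statement about functions: since $\OO_Y=\OO_X/\II_Y$, the hypothesis $Q(\OO_X)\subseteq\II_Y$ says exactly that the derivation induced by $Q$ on $\Bbbk[Y]$ is zero, so $Q^2$ also acts as $0$ there, $\Bbbk[Y]^{Q^2}=\Bbbk[Y]$, and hence $DS_Q\Bbbk[Y]=\Bbbk[Y]$. Applying \cref{main_thm} with $\VV=\OO_X$ (whose canonical $\q$-equivariant structure automatically satisfies condition (4), as already remarked) then gives $DS_Q\Bbbk[X]=DS_Q\Bbbk[Y]=\Bbbk[Y]$, with the identification induced by the restriction map, as claimed.

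For the bundle statement I would argue as follows. From \cref{main_thm} we already know $DS_Q\Gamma(X,\VV)=DS_Q\Gamma(Y,\VV|_Y)$ and that $\VV|_Y$ is a $\q$-equivariant vector bundle on $Y$. Next I would record two consequences of $Q(\OO_X)\subseteq\II_Y$: (a) $Q$, and hence $Q^2$, acts $\OO_Y$-linearly on $\VV|_Y$, since for a local section $s$ and local function $f$ one has $Q(fs)=Q(f)s\pm fQ(s)$ with $Q(f)\equiv 0$ on $Y$; and (b) $Q(f)\in\II_Y\subseteq\m_y$ for every $y\in Y(\Bbbk)$, so $Q$ vanishes at every closed point of $Y$, whence $Y(\Bbbk)\subseteq Z(\Bbbk)$, and combined with hypothesis (1) this gives $Z(\Bbbk)=Y(\Bbbk)$. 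Because $Q^2$ is $\OO_Y$-linear and acts semisimply on global sections, the $Q^2$-invariants form a sub-bundle $\WW\subseteq\VV|_Y$, on which $Q$ restricts to an odd $\OO_Y$-linear square-zero endomorphism; set $\mathcal{H}:=\ker(Q|_\WW)/\operatorname{Im}(Q|_\WW)$, a coherent sheaf on $Y$. Since $Y$ is affine, global sections are exact on quasi-coherent sheaves, so $\Gamma(Y,\mathcal{H})=DS_Q\Gamma(Y,\VV|_Y)=DS_Q\Gamma(X,\VV)$, and it remains only to show that $\mathcal{H}$ is locally free with fibers isomorphic to $DS_Q(\VV|_y)$.

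This last point is the crux, and it is where hypothesis (4) does the real work. Fix $y\in Y(\Bbbk)=Z(\Bbbk)$ and take the $\q$-stable affine neighborhood $U_y$ provided by (4). Applying \cref{fixed_point_neighborhood} to $\VV$ on $U_y$, after shrinking $U_y$ we obtain a $\q$-equivariant isomorphism $\VV|_{U_y}\cong\OO_{U_y}\otimes_\Bbbk\VV|_y$; restricting to $U_y\cap Y$ and using (a), this identifies the $Q$-action on $\Gamma(U_y\cap Y,\VV|_Y)\cong\Bbbk[U_y\cap Y]\otimes_\Bbbk\VV|_y$ with $1\otimes Q_{\VV|_y}$. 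As $\Bbbk$ is a field, passing to $Q^2$-invariants and then to $Q$-cohomology commutes with $\Bbbk[U_y\cap Y]\otimes_\Bbbk(-)$, so $\mathcal{H}|_{U_y\cap Y}\cong\OO_{U_y\cap Y}\otimes_\Bbbk DS_Q(\VV|_y)$. Since $Z(\Bbbk)=Y(\Bbbk)$, the opens $U_y\cap Y$ cover $Y$, so $\mathcal{H}$ is a vector bundle on $Y$ with $\mathcal{H}|_y\cong DS_Q(\VV|_y)$, and together with the previous paragraph this yields $DS_Q\Gamma(X,\VV)=\Gamma(Y,\mathcal{H})$. I expect the main obstacle to be precisely this local-triviality step: without (4) the dimension of $DS_Q(\VV|_y)$ can jump along $Y$ and $\mathcal{H}$ need not be locally free, so one must be careful to see that (4) (through \cref{fixed_point_neighborhood}) rules this out and that the resulting local trivializations indeed assemble into a genuine bundle.
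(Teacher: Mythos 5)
Your proposal is correct and takes essentially the same route as the paper: the first claim is \cref{main_thm} plus the observation that $Q(\OO_X)\sub\II_Y$ makes $Q$ act by zero on $\Bbbk[Y]$, and the second claim uses hypothesis (4) through \cref{fixed_point_neighborhood} to see that $Q$ acts as an $\OO_Y$-linear, locally trivialized (hence constant-rank) endomorphism of $\VV|_Y$, whose cohomology sheaf is then locally free with fibers $DS_Q(\VV|_y)$. You have simply made explicit the details the paper's two-sentence proof leaves implicit (e.g.\ $Y(\Bbbk)=Z(\Bbbk)$, exactness of $\Gamma$ on the affine $Y$, and the identification of the $Q$-action with $1\otimes Q_{\VV|_y}$ in the local trivialization).
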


	\begin{proof}
		The first statement follows immediately from \cref{main_thm}.  For the second statement, we may apply \cref{fixed_point_neighborhood} to show that $Q$ defines a constant rank vector bundle endomorphism of $\VV|_Y$, whose cohomology will have fibers as claimed.
	\end{proof}
	
	By the following corollary, \cref{main_thm} allows us to always reduce to the case when $Q^2=0$.
	\begin{cor}\label{cor_assume_sq_zero}
		If $Y=\Spec\Bbbk[X]/(\operatorname{Im}Q^2)$ where $(\operatorname{Im}Q^2)$ is the ideal generated by $\operatorname{Im}Q^2$, then $Y$ is a smooth subvariety of $X$ and we have
		\[
		DS_Q\Bbbk[X]=DS_Q\Bbbk[Y].
		\]
	\end{cor}
	\begin{proof}
	We need to check that $Y$ is smooth and that $Q:(\NN_Y^\vee|_z)_{\ol{1}}\to(\NN_Y^\vee|_{z})_{\ol{0}}$ is an isomorphism for all $z\in Z(\Bbbk)$.  Let $I=(\operatorname{Im}Q^2)$, and let $y\in Y(\Bbbk)$.  Choose a coordinate system $t_1,\dots,t_m,\xi_1,\dots,\xi_n$ around $y$ such that these coordinates are eigenvectors. Then we claim that $I_y$, the localization of $I$ at $y$, is generated by those coordinates with nonzero eigenvalues.  The proof of this statement is identical to the one given in Lem. 2.1 of \cite{I72}, via Prop. 3.2 of \cite{Sch89}.  Thus we obtain that $Y$ is smooth, and further we have that $Q^2$ is an automorphism of $\NN_Y^\vee|_{y}$ for all $y\in Y(\Bbbk)$, which is sufficient for condition (3) of \cref{main_thm}.
\end{proof}
	
	\subsection{Proof of Theorem \ref{main_thm}}  The proof of Theorem \ref{main_thm} will occupy the remainder of this section.  In fact, Theorem \ref{main_thm} follows as corollary of the following result, which is what we will actually show.
	\begin{prop}\label{main_prop}
		Retain the hypotheses of Theorem \ref{main_thm}.  Let $\VV_Y$ denote the sheaf of sections of $\VV$ which vanish upon restriction to $Y$.  Then $\Gamma(X,\VV_Y)$ is a projective $\q$-module; in particular $DS_Q\Gamma(X,\VV_Y)=0$. 
	\end{prop}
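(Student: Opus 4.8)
\textbf{Proof strategy for \cref{main_prop}.}

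The plan is to deduce \cref{main_prop} from \cref{point_neighborhood_redn} applied to the sheaf $\VV_Y=\II_Y\VV$. By hypothesis (2) this is $\q$-equivariant and quasi-coherent (and $Q^2$ acts semisimply on its sections over $\q$-stable affine opens, being a $Q^2$-stable subspace of the sections of $\VV$), so it suffices to find, for every closed point $x\in X(\Bbbk)$, a $\q$-stable affine open neighborhood $U$ such that $\Gamma(V,\VV_Y)$ is a projective $\q$-module for every $\q$-stable affine open $V\subseteq U$. If $x\notin Z(Q)$, take (as $X$ is affine) a $\q$-stable affine open $U\ni x$ on which $Q$ is non-vanishing; then $Q$ is non-vanishing on every $V\subseteq U$, so \cref{non_vanishing_lemma}(4) applied to $\VV_Y|_V$ gives $DS_Q\Gamma(V,\VV_Y)=0$, hence projectivity by \cref{q mod proj criteria}.

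Now fix $z\in Z(Q)$, which lies in $Y$ by hypothesis (1). Using hypothesis (4) together with \cref{fixed_point_neighborhood}, choose a $\q$-stable affine open $U\ni z$ and a $\q$-equivariant trivialization $\VV|_U\cong\OO_U\otimes_\Bbbk\VV|_z$ with $\VV|_z$ a finite-dimensional $\q$-module; $U$ will be shrunk further below, retaining these properties. For $\q$-stable affine $V\subseteq U$ this gives a $\q$-module isomorphism $\Gamma(V,\VV_Y)\cong\Gamma(V,\II_Y)\otimes_\Bbbk\VV|_z$, so since $DS_Q$ is a tensor functor, \cref{q mod proj criteria} reduces the task to showing $\Gamma(V,\II_Y)$ is a projective $\q$-module; as the hypotheses of \cref{main_thm} restrict to each such $V$, it suffices to treat $V=U$.

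The crux is therefore that $\Gamma(U,\II_Y)$ is a projective $\q$-module, and this is where commutative algebra enters. Since $Q^2$ is semisimple and $\Omega_X$ is locally free, choose $Q^2$-eigenvector odd functions $\xi_1,\dots,\xi_r\in\II_Y$ whose classes form a basis of $(\NN_Y^\vee|_z)_{\ol1}$; then by hypothesis (2) the even functions $t_i:=Q(\xi_i)\in\II_Y$ are $Q^2$-eigenvectors with the same eigenvalues $\lambda_i$, and by hypothesis (3) their classes form a basis of $(\NN_Y^\vee|_z)_{\ol0}$. After shrinking $U$, the functions $t_1,\dots,t_r,\xi_1,\dots,\xi_r$ therefore generate $\II_Y$, have linearly independent differentials, and form a regular sequence in the regular ring $\Bbbk[U]$; they are set up so that $Q$ acts ``internally'' on the normal coordinates, via $\xi_i\mapsto t_i$ and $t_i\mapsto\lambda_i\xi_i$. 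The key point is that the super-Koszul complex of this regular sequence---obtained from $\Bbbk[U]$ together with auxiliary odd variables $\theta_i$ (to kill $t_i$) and even variables $y_i$ (to kill $\xi_i$), with the usual Koszul differential---resolves $\Bbbk[U]/\II_Y=\Bbbk[Y\cap U]$ and carries a compatible extension of $Q$, given by $Q(\theta_i)=-\lambda_i y_i$ and $Q(y_i)=-\theta_i$; this extension anticommutes with the Koszul differential and keeps $Q^2$ semisimple on each term. A direct check---decomposing $\Bbbk[\theta_i,y_i]$ into the trivial line $\Bbbk$ and the two-dimensional $\q$-submodules spanned by $\{y_i^a,\theta_i y_i^{a-1}\}$, each of which is $\q$-projective no matter the value of $\lambda_i$ (by \cref{q mod proj criteria})---shows that every term of this resolution in positive cohomological degree is a projective $\q$-module. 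Applying the exact functor of $Q^2$-invariants to the resolution and running the standard comparison along it (e.g.\ via the two spectral sequences of the resulting bicomplex, each concentrated in a single column and hence convergent) identifies the restriction map $DS_Q\Bbbk[U]\to DS_Q\Bbbk[Y\cap U]$ as an isomorphism; plugging this into the long exact sequence of $DS_Q$ associated to $0\to\Gamma(U,\II_Y)\to\Bbbk[U]\to\Bbbk[Y\cap U]\to0$ forces $DS_Q\Gamma(U,\II_Y)=0$.

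I expect the main obstacle to be the crux: verifying that after the coordinate choice the Koszul differential genuinely matches the relevant part of $Q$, extending the $\q$-action to the Koszul variables while preserving semisimplicity of $Q^2$, and pushing the $Q^2\neq 0$ contributions through the homological bookkeeping (when $Q^2\neq 0$ the operator $Q$ is not itself a differential, and one must work on the $Q^2=0$ isotypic component throughout, where it is). The reductions preceding the crux are routine given Sections 2--3, modulo the standard fact---used to justify the repeated shrinking of $U$---that a point at which $Q$ vanishes has arbitrarily small $\q$-stable affine open neighborhoods.
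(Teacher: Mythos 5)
Your reductions match the paper's: you localize via \cref{point_neighborhood_redn}, dispose of points off $Z(Q)$ with \cref{non_vanishing_lemma}, use hypothesis (4) with \cref{fixed_point_neighborhood} to replace $\VV_Y$ by $\II_Y\otimes_\Bbbk\VV|_z$, and choose normal functions $\xi_i,t_i=Q(\xi_i)$ exactly as in the paper. The divergence, and the genuine gap, is at your crux. Your Koszul--Tate resolution of $\Bbbk[Y\cap U]$ is necessarily of \emph{infinite} length: killing the odd generators $\xi_i$ requires the polynomial even variables $y_i$. For such an unbounded complex the transfer you invoke --- exactness plus projectivity of all terms in positive degree forces $DS_Q$ of the degree-zero term to agree with $DS_Q$ of the augmentation, ``via the two spectral sequences\dots each concentrated in a single column and hence convergent'' --- is false as a general principle. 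Concretely, over $\q$ with $Q^2=0$ the trivial module $\Bbbk$ admits the exact resolution $\cdots\to\Pi\Bbbk^{1|1}\to\Bbbk^{1|1}\to\Bbbk\to0$ by free (hence projective) $\q$-modules, all positive-degree terms have $DS_Q=0$, yet $DS_Q\Bbbk=\Bbbk\neq0$; a direct computation in this example shows the spectral sequence obtained by taking the resolution differential first degenerates in a single column and still fails to converge to the cohomology of the total complex. This is exactly why \cref{q mod proj criteria}(2) demands a \emph{finite-length} projective resolution. Since the eigenvalues $\lambda_i$ may vanish (the case $Q^2=0$ is the principal one), nothing truncates the $y_i$-directions after passing to $Q^2$-invariants, so the convergence problem is genuinely present in your setting and is the missing step.

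The paper's proof avoids introducing any auxiliary variables: by \cref{kers define splitting} it splits $\Bbbk[U]=A_0\otimes\bigwedge\langle\xi,\eta\rangle$, notes that only the \emph{even} normal functions $t_1,\dots,t_n$ need a Koszul complex, and that this Koszul complex is the finite exterior algebra on the $\xi_i$ already present in $\Bbbk[U]$, with differential the lowest piece $Q_{-1}=\sum t_i\partial_{\xi_i}$ of $Q$ for the grading operator $h=\sum\xi_i\partial_{\xi_i}$. Regularity of $(t_1,\dots,t_n)$ in the honest commutative ring $A_0$ then gives $DS_{Q_{-1}}I_{Y\cap U}=0$, and the passage from $Q_{-1}$ to $Q$ is \cref{lemma_technical}, whose induction terminates precisely because $h$ has eigenvalues bounded by the number of odd coordinates; that bounded filtration is the substitute for the convergence you would need, and repairing your route would require an analogous boundedness or completeness argument for the unbounded Koszul--Tate bicomplex. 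Two smaller points: for the pieces $\Bbbk\{y_i^a,\theta_iy_i^{a-1}\}$ with $\lambda_i=0$ your appeal to \cref{q mod proj criteria}(3) needs a parity shift (there $Q$ maps the even line isomorphically onto the odd line, not the reverse), and the assertion that $t_i,\xi_i$ generate $\II_Y$ after shrinking (likewise the mixed-parity regularity claim in the supercommutative ring $\Bbbk[U]$) requires the paper's argument that $Y$ is a connected component of the zero locus of the ideal they generate, the other component being removed by inverting a $Q^2$-eigenfunction.
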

	
	Indeed, Proposition \ref{main_prop} is enough because then if we apply $DS_Q$ to
	\[
	0\to \Gamma(X,\VV_Y)\to\Gamma(X,\VV)\to\Gamma(Y,\VV|_Y)\to 0,
	\]
	we obtain that $DS_Q\Gamma(X,\VV)\to DS_Q\Gamma(Y,\VV|_Y)$ is an isomorphism, see Lemma 2.7 of \cite{GHSS}.
	
	\begin{proof}[Proof of Proposition \ref{main_prop}] We first deal with the case of $\VV=\OO_X$, and use Lemma \ref{point_neighborhood_redn}:
		
	\textbf{Points where $Q$ doesn't vanish:} If $x\in X(\Bbbk)\setminus Z(\Bbbk)$, then by affinity and the semisimplicity of the action of $Q^2$, there exists a $Q^2$-eigenfunction $f\in\Gamma(X,\II_Z)$ such that $f(x)\neq0$.  Thus $D(f)$ will be a $\q$-stable affine open subvariety containing $x$ on which $Q$ is nonvanishing.  Thus, by Lemma \ref{non_vanishing_lemma}, $\Gamma(V,\VV_Y)$ is projective for all $\q$-stable affine open subsets $V\sub D(f)$.

	\textbf{Finding convenient local coordinates where $Q$ does vanish:} It remains to deal with the points in $Z(\Bbbk)$.  For this step we begin by assuming that $\VV=\OO_X$.  Choose $z\in Z(\Bbbk)$. We want to find a `nice' splitting of $\m_z\to\m_z/\m_z^2$; to expect a full $\q$-module splitting would be too much, but we can obtain enough as follows.  Recall that $\NN_Y^\vee|_z$ is a subspace of $\m_z/\m_z^2$.
	
	We have a natural, $\q$-equivariant surjective map $\Gamma(X,\II_Y)\to\NN^\vee_Y|_z$; by our assumptions on $\NN^\vee_Y|_z$ it is a projective $\q$-module, and so we may split this map $\q$-equivariantly; in particular from this splitting we may choose sections $t_1,\dots,t_n,\xi_1,\dots,\xi_n\in\Gamma(X,\II_Y)$ satisfying:
	\[
	Q(\xi_i)=t_i, \ \ \ \ Q(t_i)=c_i\xi_i,
	\]
	where $c_i\in \Bbbk$.  Further, by the discussion at the end of \cref{supergeo_sec}, these functions define linearly independent differentials in a neighborhood of $z$.  Write $V$ for the vector space spanned by $t_1,\dots,t_n,\xi_1,\dots,\xi_n$.  Then the natural map $V\to T_z^*X$ is injective and $\q$-equivariant.  Choose a $Q^2$-equivariant splitting $T^*_zX=V\oplus W$, and a $Q^2$-equivariant splitting of $W$ off $\m_z\to T_z^*X$, thus giving us functions 
	\[
	t_1,\dots,t_n,s_1,\dots,s_r,\xi_1,\dots,\xi_n,\eta_1,\dots,\eta_\ell\in\m_z
	\] 
	each of which is a $Q^2$-eigenvector (here the $s_i$'s are even and the $\eta_i$'s are odd).  These functions define linearly independent differentials in a neighborhood of $z$; we may now apply the proof of \cref{fixed_point_neighborhood} using this splitting, to the case where $\WW$ is the cotangent bundle, to find a $\q$-stable, affine open neighborhood $U'$ of $z$ on which these functions continue to define linearly independent differentials.
	
	\textbf{Cutting down $U'$ to $U$ so that $Y\cap U=(t_1=\cdots=t_n=\xi_1=\cdots\xi_n=0)$:} Consider the ideal $J:=(t_1,\dots,t_n,\xi_1,\dots,\xi_n)\sub\Bbbk[U']$; then we need further that $J=I_{Y\cap U'}$, i.e. that the ideal $J$ exactly cuts out $Y\cap U'$.  A priori we only have $J\sub I_{Y\cap U'}$.  However since $t_1,\dots,t_n,\xi_1,\dots,\xi_n$ are part of a coordinate system, $J$ will cut out a smooth subvariety of the same dimension as $Y$, and thus the zero set of $J$ can be written as $Y\sqcup Y'$, i.e. $Y$ will be a connected component of its zero set.
	
	Consider the ideal $I_{Y'}$ of $Y'$ in $\Bbbk[U']$.  Because $J$ is $Q^2$-stable, $I_{Y'}$ is also $Q^2$-stable; by affinity there must exist a $Q^2$-eigenfunction $f\in I_{Y'}$ such that $f(z)\neq0$.  Now we set $U:=D(f)\sub U'$; then $U$ is a $Q^2$-stable affine open neighborhood of $z$, and $I_{Y\cap U}=(t_1,\dots,t_n,\xi_1,\dots,\xi_n)$.
	
	\textbf{Form of $Q$ on $U$:} Now on $U$ we have a trivialization of the tangent bundle defined by the sections $\d_{t_1},\dots,\d_{t_n},\d_{s_1},\dots,\d_{s_r},\d_{\xi_1},\dots,\d_{\xi_n},\d_{\eta_1},\dots,\d_{\eta_\ell}$.  Under this trivialization, $Q$ takes the local form:
	
	\begin{equation}\label{eqn_Q}
	Q=\sum\limits_it_i\d_{\xi_i}+\sum\limits_{i}c_i\xi_i\d_{t_i}+\sum\limits_{i}f_i\d_{s_i}+\sum\limits_{i}g_i\d_{\eta_i}.
	\end{equation}
	where $f_1,\dots,f_r,g_1,\dots,g_{\ell}$ are some functions.  Further, by our choice of coordinates we have
	\[
	Q^2=\sum_{i=1}^n c_i(t_i\d_{t_i}+\xi_i\d_{\xi_i})+\sum\limits_id_is_i\d_{s_i}+\sum\limits_ie_i\eta_i\d_{\eta_i}.
	\]
	where $d_i,e_i\in \Bbbk$.  Define the following vector field on $U$: 
	\[
	h:=\xi_1\d_{\xi_1}+\dots+\xi_n\d_{\xi_n}.
	\]
	Then $h$ commutes with $Q^2$ and has integral eigenvalues bounded between 0 and $n$.  We may decompose $Q$ as 
	\[
	Q=Q_{-1}+Q_0+Q_1+\dots+Q_{n}
	\]
	according to the eigenvalues of $h$. By \cref{eqn_Q} we have  
	\[
	Q_{-1}=t_1\d_{\xi_1}+\dots+t_k\d_{\xi_k}.
	\]
	
	\textbf{Realizing $Q_{-1}$ as defining a Koszul complex:}  Write $A=\Bbbk[U]$; then since $U$ is affine and smooth, it is split.  By Lemma \ref{kers define splitting}, we may split $A$ as 
	\[
	A=A_0\otimes{\bigwedge}^\bullet\langle\xi_1,\dots,\xi_n,\eta_1,\dots,\eta_\ell\rangle.
	\]
	where $A_0=\ker\d_{\xi_1}\cap\cdots\cap\ker\d_{\xi_n}\cap \ker\d_{\eta_1}\cap\cdots\cap\ker\d_{\eta_\ell}$.  In particular $t_1,\dots,t_n\in A_0$.  Then we have
	\[
	I_{Y\cap U}=\left((t_1,\dots,t_n)_0\oplus A_0\otimes{\bigwedge}^+\langle\xi_1,\dots,\xi_k\rangle\right)\otimes{\bigwedge}^\bullet\langle\eta_1,\dots,\eta_\ell\rangle.
	\]
	Here $(t_1,\dots,t_n)_0$ denotes the ideal generated by $t_1,\dots,t_n$ in $A_0$, which is exactly the ideal defining $Y_0\cap U_0$ in $U_0$.  Now one sees that $Q_{-1}$ is defining the Koszul complex on 
	\[
	A_0\otimes{\bigwedge}^\bullet\langle\xi_1,\dots,\xi_n\rangle
	\]
	and the cohomology is $A_0/(t_1,\dots,t_n)_0$ because $t_1,\dots,t_n$ define a regular sequence in $A_0$.  Thus the cohomology of $Q_{-1}$ on
	\[
	(t_1,\dots,t_n)_0\oplus A_0\otimes{\wedge}^+\langle\xi_1,\dots,\xi_k\rangle
	\]
	is trivial, implying that its cohomology on $I_{Y\cap U}$ is also trivial, i.e. $DS_{Q_{-1}}I_{Y\cap U}=0$, since $Q_{-1}(\eta_i)=0$.  
	
	\textbf{Obtaining that $DS_{Q}I_{Y\cap U}=0$:} To show that $DS_{Q}I_{Y\cap U}=0$, we appeal to the following lemma.

	\begin{lemma}\label{lemma_technical}
		Suppose that $h,Q_{-1},Q_0,\dots,Q_{n}$ are operators on a super vector space $V$ such that
		\begin{enumerate}
			\item  $h$ is even and semisimple with bounded, integral eigenvalues;
			\item each $Q_{i}$ is an odd operator;
			\item 	$[h,Q_i]=iQ_i$.
		\end{enumerate}
		If $DS_{Q_{-1}}V=0$, then $DS_{Q}V=0$, where $Q=Q_{-1}+Q_0+\dots+Q_{n}$.
	\end{lemma}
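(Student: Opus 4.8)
The plan is to run a spectral-sequence / filtration argument on $V$ using the grading coming from $h$. Since $h$ is semisimple with integral eigenvalues bounded, say, between $a$ and $b$, we may write $V = \bigoplus_{j=a}^b V^{(j)}$ where $V^{(j)}$ is the $j$-eigenspace of $h$. The condition $[h,Q_i]=iQ_i$ says $Q_i$ raises the $h$-degree by $i$, so $Q_{-1}$ lowers it by one and $Q_0,\dots,Q_n$ preserve or raise it. First I would observe that the subspaces $F_p := \bigoplus_{j\le p} V^{(j)}$ form a $Q$-stable decreasing (in the appropriate direction) filtration of $V$: indeed $Q=Q_{-1}+Q_0+\cdots+Q_n$ sends $V^{(j)}$ into $V^{(j-1)}\oplus V^{(j)}\oplus\cdots\oplus V^{(j+n)}$, and the only component that could leave $F_p$ downward is $Q_{-1}$, which keeps us inside $F_p$. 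So this is an honest filtration of the $\q$-module $V$ — but one must be careful that $Q^2$ also respects it; since $Q^2$ is a polynomial in the $Q_i$ it shifts $h$-degree by a nonnegative amount, and it is still semisimple on $V$ because $V$ is a $\q$-module by hypothesis (we only ever apply $DS$ to $\q$-modules). The associated graded $\operatorname{gr} V = \bigoplus_j V^{(j)}$ carries the induced differential, which is precisely the degree-$(-1)$ part of $Q$, namely $Q_{-1}$.

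Next I would invoke the basic exactness property of $DS_Q$ together with \cref{q mod proj criteria}(1): a $\q$-module is projective iff its $DS$ vanishes, and projectivity is detected by exactness of the Koszul-type two-periodic complex. The hypothesis $DS_{Q_{-1}}V=0$ says that $(\operatorname{gr} V, Q_{-1})$ is a projective $\q$-module (here I am using $Q_{-1}^2 = 0$, which follows since $Q_{-1}^2$ has $h$-degree $-2$ but $Q^2$ has nonnegative degree, so the degree $-2$ part of $Q^2$, which is $Q_{-1}^2$, must vanish — this needs a short check). Then I would argue that projectivity passes from $\operatorname{gr} V$ back up to $V$: run the standard filtered-module argument, e.g. lift a contracting homotopy / splitting $s_{-1}$ for $Q_{-1}$ on $\operatorname{gr} V$ to an operator on $V$ that shifts $h$-degree by $+1$, and correct it order by order in the filtration to produce a splitting $s$ with $Qs+sQ$ invertible. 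Because the filtration is finite (bounded eigenvalues!), the correction terms form a finite, nilpotent-type series and converge; this is exactly where hypothesis (1) — boundedness — is essential, and it is the technical heart of the argument. Concretely, one seeks $s$ with $[Q,s] = \mathrm{id} + N$ where $N$ strictly raises the filtration degree, hence is nilpotent, so $\mathrm{id}+N$ is invertible and $s(\mathrm{id}+N)^{-1}$ is the desired homotopy; then $Q:V_{\ol 1}\to V_{\ol 0}$ (on $V^{Q^2}$, after the standard reduction) behaves well enough to conclude $DS_QV=0$ via \cref{q mod proj criteria}.

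The main obstacle I anticipate is bookkeeping around $Q^2$: all the clean statements in \cref{q mod proj criteria} are about genuine $\q$-modules, i.e. require $Q^2$ semisimple, and while $V$ itself is assumed to be such, the graded pieces $V^{(j)}$ need not individually be $Q^2$-stable (since $Q^2$ can raise $h$-degree). One fixes this either by passing to $\operatorname{gr} V$ where the induced square is $Q_{-1}^2=0$ (trivially semisimple), or by working filtration-degree-by-filtration-degree; but the interplay must be handled with care so that the homotopy one builds genuinely commutes appropriately with $Q^2$ on $V^{Q^2}$. A clean way to package the whole argument is: reduce first to the case $Q^2=0$ (using that $[Q^2,Q]=0$ and $Q^2$ semisimple allow one to restrict to a generalized-eigenspace situation, and $DS$ only sees the $0$-eigenspace), after which $h$, $Q$, and all the $Q_i$ act on an honest complex and the filtration argument above goes through without the $Q^2$ caveats. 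That reduction, plus the finiteness of the $h$-filtration, are the two ingredients I would make sure to state explicitly before grinding through the homotopy construction.
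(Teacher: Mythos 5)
Your strategy rests on the same underlying principle as the paper's proof --- the lowest $h$-degree component of $Q$ is $Q_{-1}$, and acyclicity of $Q_{-1}$ together with boundedness of the $h$-spectrum forces acyclicity of $Q$ --- but your execution is genuinely different and heavier. The paper argues directly on cocycles: if $Qv=0$ and $v_i$ is the lowest nonzero $h$-component, then $Q_{-1}v_i=0$, so $DS_{Q_{-1}}V=0$ lets one subtract an element of the form $Qw$ to push the lowest nonzero component strictly upward, and boundedness of the eigenvalues terminates the induction; no filtration spectral sequence, no contracting homotopy, and no inversion of $\mathrm{id}+N$ are needed. Your homotopy-perturbation route does work (lift a homotopy $s_{-1}$ with $[Q_{-1},s_{-1}]=\mathrm{id}$, observe $[Q,s_{-1}]=\mathrm{id}+N$ with $N$ strictly raising the $h$-degree, hence nilpotent by boundedness), and it buys you an explicit global homotopy rather than a cocycle-by-cocycle correction, but it demands exactly the extra bookkeeping you flag. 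One caveat: your claim that $Q^2$ shifts $h$-degree nonnegatively (used to deduce $Q_{-1}^2=0$) does not follow from the hypotheses as stated, since $Q^2$ contains $Q_{-1}^2$ in degree $-2$ and cross-terms in degree $-1$; it is true in the situation where the lemma is applied, because there $h$ commutes with $Q^2$, so $Q^2$ has pure degree $0$ and its degree $-2$ component $Q_{-1}^2$ must vanish (and $V^{Q^2}$ is then $h$- and $Q_i$-stable, so one can restrict to it at the outset, which also disposes of your worries about semisimplicity along the filtration). Note finally that the paper's induction does not even need $Q_{-1}^2=0$: it only uses that every $Q_{-1}$-closed vector is $Q_{-1}$-exact, and a $Q_{-1}$-closed vector automatically lies in $V^{Q_{-1}^2}$ (its nonzero $Q_{-1}^2$-eigencomponents vanish), so the hypothesis $DS_{Q_{-1}}V=0$, interpreted via \cref{q mod proj criteria}, suffices as stated.
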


	\begin{proof}
	Let $v\in V$ be such that $Qv=0$, and write $v=v_{i}+v_{i+1}+\dots+v_{j}$ for the $h$-eigenspace decomposition of $v$, where we assume that $i$ is such that $v_i\neq0$.  Then clearly $Q_{-1}v_i=0$, so by assumption there exists $w=w_{i+1}+\dots$ such that $v-Qw$ only has nonzero components in eigenspaces of $h$ with eigenvalue strictly larger than $i$.  Because $h$ has bounded eigenvalues on $V$, we may thus continue inductively to construct $v'\in V$ such that $Qv'=v$, completing the proof.
	\end{proof}

	\textbf{Completing the proof for $\VV=\OO_X$:} Now if $V\sub U$ is an affine $\q$-stable open subvariety of $U$, then we will still have our coordinates above and so the same arguments imply that $DS_QI_{Y\cap V}=0$, which completes the proof in the case when $\VV=\OO_X$.
	
	\textbf{The case of $\VV$ general:} Now we need to deal with the case of general $\VV$.  However we use hypothesis (4) and apply Lemma \ref{fixed_point_neighborhood} to the case of $X=U_z\cap U$ and $\WW=\VV$ to find a $\q$-stable, affine open neighborhood $U'$ of $z$ on which $\VV|_{U'}\cong\OO_{X}\otimes\VV|_{z}$.   Now on $U'$ we have $\q$-equivariant isomorphisms
	\[
	(\VV_Y)|_{U'}=(\II_Y\otimes_{\OO_X}\VV)|_{U'}\cong (\II_Y|_{U'})\otimes_{\Bbbk}\VV|_z.
	\] 
	In particular we have a $\q$-module isomorphism
	\[
	\Gamma(U',\VV_Y)=\Gamma(U',\II_Y)\otimes_{\Bbbk}\VV|_z.
	\]
	Since $DS_Q\Gamma(V,\II_Y)=0$ for any affine open subvariety $V\sub U'$, we have finished the proof of \cref{main_prop}, and thus also of \cref{main_thm}.
\end{proof}

	\section{The supergroup \texorpdfstring{$\widetilde{G_u}$}{~Gu}}
	
	In this section, $G$ will denote an affine algebraic supergroup over $\Bbbk$.  Then $\Bbbk[G]$ is a Hopf superalgebra, and we write $\Delta$ for its coproduct map.  Recall that $G_0\sub G$ will be an affine algebraic group.
	
	By the Lie superalgebra $\g:=\operatorname{Lie}G$, we mean the Lie algebra of left-invariant vector fields on $G$.  We write $u$ for an element of $\g$, and we have a natural isomorphism $\g\to T_eG$, $u\mapsto u_e$, given by evaluation of the vector field at the identity. 
	
	The supergroup $G$ acts on itself by conjugation, inducing an action of $\g$ on $G$ by vector fields.  For $u\in\g$, the vector field induced by this action is:
	\[
	u_{ad}=u_R+u_L=(1\otimes u_e-u_e\otimes 1)\circ\Delta.
	\] 
	where $u_R=(1\otimes u_e)\circ\Delta$ and $u_L=-(u_e\otimes 1)\circ\Delta$ are the infinitesimal right and left translations in the $u_e$ direction.  We refer to $u_{ad}$ as the adjoint vector field of $u$.
	
	This adjoint action of $\g$ on $\Bbbk[G]$ respects the Hopf superalgebra structure, i.e. all morphisms coming from the Hopf superalgebra structure are morphisms of $\g$-modules.  Let $u\in\g_{\ol{1}}$ be such that $c=u^2$ has a semisimple adjoint vector field.  Then since $DS_u$ is a tensor functor, $\Bbbk[G]_u:=DS_{u_{ad}}\Bbbk[G]$ will be a supercommutative Hopf superalgebra.  
	\begin{definition}
		We write $\widetilde{G_u}$ for the algebraic supergroup with $\Bbbk[\widetilde{G_u}]=\Bbbk[G]_u$, and write $\widetilde{\g_u}$ for its Lie superalgebra.
	\end{definition}
	
	If $V$ is a $G$-module, then we have a morphism $V\to V\otimes\Bbbk[G]$, so when we take $DS_u$ we obtain a morphism $V_u\to V_u\otimes \Bbbk[G]_u=V_u\otimes \Bbbk[\widetilde{G_u}]$, which gives $V_u:=DS_uV$ the natural structure of a $\widetilde{G_u}$-module; in this way $\widetilde{G_u}$ acts on the functor $DS_u$. 
	
	The supergroup $\widetilde{G_u}$ was introduced in \cite{sherman2022symmetriesDS}, and was computed in several cases when $G=GL(m|n),Q(n)$, or when $G$ is 'split', meaning that $\g_{\ol{1}}$ is an ideal of $\g$ (equivalently $G=G_0\ltimes\Pi V$, where $V$ is a finite-dimensional $G_0$-module).  However the computations heavily relied on being able to work in coordinates and perform explicit computations.  We will see that with the localization theorem we can compute the supergroup $\widetilde{G_u}$ for a wide range of $u$, and without appealing to coordinates.
	
	In what follows, we write $C(u)$ for the centralizer subgroup of $u$ in $G$.  We begin with the following:
	
	\begin{lemma}\label{lemma_vanishing_adjoint}
		The vanishing set of the adjoint vector field $u_{ad}$ is $C(u)_0\sub G_0$.
	\end{lemma}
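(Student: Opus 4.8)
The plan is to compute the $\Bbbk$-points of $Z(u_{ad})$ directly, and then upgrade the resulting set-theoretic equality to an equality of closed subvarieties of $G_0$ using reducedness.

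First I would evaluate $u_{ad}$ at a $\Bbbk$-point $g\in G(\Bbbk)$. Writing $L_g,R_g\colon G\to G$ for left and right translation by $g$, the formulas $u_R=(1\otimes u_e)\circ\Delta$, $u_L=-(u_e\otimes 1)\circ\Delta$ together with the identities $L_g^{*}=(\operatorname{ev}_g\otimes\operatorname{id})\circ\Delta$ and $R_g^{*}=(\operatorname{id}\otimes\operatorname{ev}_g)\circ\Delta$ give, as a tangent vector in $T_gG$,
\[
u_{ad}(g)=(dL_g)(u_e)-(dR_g)(u_e),
\]
where $d(\cdot)$ denotes the differential $T_eG\to T_gG$. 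This is the usual decomposition of the conjugation vector field into its left- and right-invariant parts; in the super setting the translation differentials are still \emph{even} linear isomorphisms, so the only new feature is the Koszul sign incurred when $u_e$ is moved past the first tensor factor, and that sign is irrelevant to whether $u_{ad}(g)$ vanishes.

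Next, since $dL_g$ is invertible, $u_{ad}(g)=0$ holds precisely when $(dL_{g^{-1}}\circ dR_g)(u_e)=u_e$. But $L_{g^{-1}}\circ R_g$ is the conjugation map $x\mapsto g^{-1}xg$, whose differential at $e$ is $\Ad(g^{-1})$, so $u_{ad}$ vanishes at $g$ if and only if $\Ad(g^{-1})u_e=u_e$, equivalently $\Ad(g)u_e=u_e$, i.e.\ exactly when $g\in C(u)(\Bbbk)$. Hence $Z(u_{ad})(\Bbbk)=C(u)(\Bbbk)=C(u)_0(\Bbbk)$, the last equality because $\Bbbk$ carries no nonzero odd elements.

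Finally I would pass to the scheme level. By definition $Z(u_{ad})$ carries the reduced structure on its vanishing set, while $C(u)_0$ is a closed subgroup scheme of the affine algebraic group $G_0$ over a field of characteristic zero, hence smooth, in particular reduced, by Cartier's theorem. Two reduced closed subschemes of the variety $G_0$ with the same closed points coincide, so $Z(u_{ad})=C(u)_0$. I expect the only genuinely delicate point to be the first step: making the identity $u_{ad}(g)=(dL_g-dR_g)(u_e)$ rigorous, which requires being precise that evaluating the odd vector field $u_{ad}$ at a $\Bbbk$-point means pairing against odd cotangent vectors, and then bookkeeping the Koszul signs from the coproduct — routine, and, as noted, immaterial for the vanishing locus.
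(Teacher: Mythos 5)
Your proposal is correct and follows essentially the same route as the paper: evaluate $u_{ad}$ at a closed point $g$ as the difference of the left and right translates of $u_e$ (the paper phrases this via translation invariance, $(u_R)(g)=L_{g^{-1}}^*(u)$, $(u_L)(g)=R_{g^{-1}}^*(-u)$) and conclude that vanishing at $g$ is equivalent to $\Ad(g)(u_e)=u_e$. Your closing reducedness/smoothness remark about upgrading the equality of $\Bbbk$-points to subvarieties is a harmless addition that the paper leaves implicit.
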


	\begin{proof}
		Let $g\in G_0(\Bbbk)$; then by left/right translation invariance, we have $(u_R)(g)=L_{g^{-1}}^*(u_e)$, and $(u_L)(g)=R_{g^{-1}}^*(-u_e)$, where $L_{g^{-1}}$, resp. $R_{g^{-1}}$ denote the left, resp. right translation by $g^{-1}$.  Therefore $u_{ad}(g)=0$ exactly if
			\[
			L_{g^{-1}}^*(u_e)+R_{g^{-1}}^*(-u_e)=0\iff \Ad(g)(u_e)=u_e.
			\]
	\end{proof}
	
	\begin{prop}\label{prop to compute G_u} 
		Let $u\in\g_{\ol{1}}$ be such that $c=u^2$ has that $c_{ad}$ is semisimple.  Let $C(c)\sub G$ be the centralizer of $c$ in $G$.  Then the map $\Bbbk[G]\to\Bbbk[C(c)]$ induces a natural isomorphism of Hopf superalgebras
		\[
		DS_u\Bbbk[G]\cong DS_u\Bbbk[C(c)],
		\]
		i.e. a natural isomorphism of supergroups $\widetilde{C(c)_u}\cong\widetilde{G_u}$.
	\end{prop}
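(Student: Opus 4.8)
The plan is to deduce Proposition~\ref{prop to compute G_u} from \cref{main_thm}, applied with $X=G$, $Q=u_{ad}$, $\VV=\OO_G$ the trivial $\q$-equivariant bundle (so that condition~(4) of \cref{main_thm} is vacuous), and $Y=C(c)$. The refinement to an isomorphism of Hopf superalgebras will then be essentially automatic.

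First I would record the elementary facts: since $u^2=c$ we have $[u,c]=0$ and $\operatorname{ad}_c=(\operatorname{ad}_u)^2$, and by hypothesis $Q^2=c_{ad}$ is semisimple on $\Bbbk[G]$, so $G$ is a $\q$-supervariety. Because $c_{ad}$ is a semisimple derivation it generates, via conjugation, a torus $T$ acting on $G$ — one may take $T=(G_Q)_0$ for the supergroup $G_Q$ integrating the $\q$-action as in \cref{section_q_supervars} — and one checks that $C(c)$ is precisely the fixed locus $G^T$. Since the fixed locus of a torus acting on a smooth superscheme is smooth, $Y=C(c)$ is a smooth closed subsupervariety of $G$ (and, of course, a closed subsupergroup).

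Next I would verify hypotheses (1)--(3) of \cref{main_thm}. For (1): by \cref{lemma_vanishing_adjoint}, $Z=Z(Q)=C(u)_0$, and any $g\in C(u)_0(\Bbbk)$ satisfies $\Ad(g)c=\Ad(g)(u^2)=(\Ad(g)u)^2=u^2=c$, so $g\in C(c)(\Bbbk)=Y(\Bbbk)$. For (2): $\Bbbk c$ is an ideal of $\g_Q$, so $T=(G_Q)_0$ is normal in $G_Q$, hence its fixed locus $Y=G^T$ is $G_Q$-stable; in particular $Q(\II_Y)\subseteq\II_Y$. For (3): fix $z\in Z(\Bbbk)$. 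Since $z$ commutes with $u$, left translation $L_z$ commutes with the flow $g\mapsto\exp(\tau u)\,g\,\exp(-\tau u)$ of $Q$, so $dL_z$ intertwines the linearizations of $Q$ and of $Q^2$ at $z$ with those at $e$. At $e$ the induced operator of $Q$ on $T_eG=\g$ is $\operatorname{ad}_u$ and that of $Q^2$ is $\operatorname{ad}_c=(\operatorname{ad}_u)^2$, which is semisimple (being the linearization of the semisimple $c_{ad}$), with $T_eY=\ker(\operatorname{ad}_c|_\g)$. Thus $\NN_Y|_z\cong\g/\ker(\operatorname{ad}_c)\cong\operatorname{Im}(\operatorname{ad}_c)$, on which $Q^2$ is invertible; as $Q$ is odd with invertible square there, the maps $Q\colon(\NN_Y|_z)_{\ol 0}\to(\NN_Y|_z)_{\ol 1}$ and $Q\colon(\NN_Y|_z)_{\ol 1}\to(\NN_Y|_z)_{\ol 0}$ are isomorphisms, and dualizing yields~(3).

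With (1)--(3) verified, \cref{main_thm} gives that restriction induces an isomorphism $DS_Q\Bbbk[G]\cong DS_Q\Bbbk[C(c)]$. Since $C(c)$ is a closed subsupergroup, this restriction map is a morphism of Hopf superalgebras intertwining the adjoint $\q$-actions, so applying the tensor functor $DS_u$ yields an isomorphism of supercommutative Hopf superalgebras, i.e.\ $\widetilde{C(c)_u}\cong\widetilde{G_u}$. The step I expect to be the main obstacle is hypothesis~(3) — nondegeneracy of the adjoint field on the conormal bundle of $C(c)$ along $Z$ — where the real content is matching the abstract conormal-bundle action of $Q$ appearing in \cref{main_thm} with the concrete operator $\operatorname{ad}_u$ on $\g/\ker(\operatorname{ad}_c)$; establishing smoothness of $C(c)$ as a superscheme and the identity $C(c)=G^T$ is a secondary point that the fixed-locus viewpoint makes routine.
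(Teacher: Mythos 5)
Your argument is correct in substance, but it takes a different route from the paper. The paper disposes of this proposition in one line: it observes that $C(c)=\Spec\Bbbk[G]/(\operatorname{Im}c)$ and invokes \cref{cor_assume_sq_zero}, whose proof verifies smoothness of the zero scheme of $\operatorname{Im}Q^2$ and hypothesis (3) of \cref{main_thm} by choosing local coordinates that are $Q^2$-eigenvectors and citing Iversen and Schmitt; that argument works on any $\q$-supervariety and uses no group structure. You instead apply \cref{main_thm} directly to $Y=C(c)$ and verify (1)--(3) by group-theoretic means: (1) via \cref{lemma_vanishing_adjoint} and $C(u)_0\subseteq C(c)_0$, (3) by translating any $z\in Z(\Bbbk)$ to the identity by an element of $C(u)_0\subseteq C(c)_0$ and identifying the linearization of $u_{ad}$ at $e$ with $\operatorname{ad}_u$ acting on $\g/\ker(\operatorname{ad}_c)\cong\operatorname{Im}(\operatorname{ad}_c)$, where invertibility of $Q^2$ forces both parity components of $Q$ to be isomorphisms. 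This is a legitimate and somewhat more conceptual verification, and it makes the normal-bundle condition transparent; what it costs you is that the smoothness of $C(c)$ as a superscheme, which you quote as ``fixed loci of torus actions on smooth superschemes are smooth,'' is exactly the nontrivial super-commutative-algebra input that the paper's \cref{cor_assume_sq_zero} supplies via Iversen's Lemma 2.1 and Schmitt's regular-sequence result, so you are implicitly leaning on the same fact you bypass. Two small points worth tightening: $(G_Q)_0$ is only determined up to a choice of central torus and may be strictly larger than the closure of the flow of $c_{ad}$, so you should take $T$ to be that closure (or argue directly with the ideal $(\operatorname{Im}c_{ad})$) to get $G^T=C(c)$ on the nose; and hypothesis (2) follows immediately from $[Q,Q^2]=0$, since the ideal of $C(c)$ is generated by $\operatorname{Im}c_{ad}=\operatorname{Im}Q^2$, with no need to invoke normality of $T$ in $G_Q$. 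The final Hopf-superalgebra upgrade via the tensor property of $DS_u$ is fine and matches what the paper leaves implicit.
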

	\begin{proof}
	This follows from \cref{cor_assume_sq_zero}, using that $C(c)=\Spec\Bbbk[G]/(\operatorname{Im} c)$.
	\end{proof}
	
	\subsection{Computation of \texorpdfstring{$\widetilde{G_u}$}{~Gu} for Kac-Moody supergroups, \texorpdfstring{$Q(n)$}{Q(n)}, and \texorpdfstring{$P(n)$}{P(n)}}  We use Proposition \ref{prop to compute G_u} to compute $\widetilde{G_u}$ for several important supergroups $G$.  In what follows, by a Kac-Moody supergroup we will mean one of the supergroups $GL(m|n)$, $SOSp(m|2n)$, $D(2,1;\alpha)$, $AG(2,1)$, or $AB(3,1)$, where for $D(2,1;\alpha)$, $AG(2,1)$, and $AB(3,1)$ we take the simply connected form of these supergroups. We write $Q(n)$ for the queer supergroup, and $P(n)$ for the periplectic supergroup, both of whom have $G_0\cong GL(n)$.  We refer to \cite{CW} for the Lie superalgebras of these supergroups listed here; however we note that in \cite{CW} the Lie superalgebra of $AG(2,1)$ is written as $G(3)$, while the Lie superalgebra of $AB(3,1)$ is written as $F(3|1)$.
	
	For the Kac-Moody supergroups and for $P(n)$, choose a maximal torus, and let $\s\l(1|1)^k\sub\g$ be a subalgebra corresponding to a choice of odd roots $\alpha_{1},\dots,\alpha_k$ such that $-\alpha_{1},\dots,-\alpha_k$ are also roots, and $\pm\alpha_{i}\pm\alpha_{j}$ is not a root whenever $i\neq j$.  Now choose $u\in\s\l(1|1)^k$ generic in the sense that $c=u^2=(c_1,\dots,c_k)$ is generic, meaning that $c_i\neq0$ for all $i$ and so that we have, apart from the case of $D(2,1;\alpha)$,
	\[
	C(c)=G'\times GL(1|1)^k,
	\]
	where $G'$ is the group in the first factor of the below list.  Such an element $u$ will be said to have rank $k$.  
	
	For $D(2,1;\alpha)$, choose an element $u$ of nonzero square in $\s\l(1|1)$, and let $c=u^2$.  Then $C(c)$ will be a supergroup with Lie superalgebra $\s\l(1|1)+\mathfrak{t}$, where $\mathfrak{t}$ is a maximal torus; it is computed explicitly in the appendix.
	
	For $G=Q(n)$, we choose a maximal torus; then its centralizer in $G$ is given by $Q(1)^n$.  Choose a factor subgroup $Q(1)^k$, and let $u\in\q(1)_{\ol{1}}^k$ be generic, in the sense that if $u=(u_1,\dots,u_k)$ then $u_i\neq0$ for all $i$, and we have:
	\[
	C(c)=Q(n-k)\times Q(1)^{k}.
	\]
	Again, such an element $u$ will be said to have rank $k$.
	
	Before giving the following theorem we recall that $\G_m$ denotes the one-dimensional complex algebraic torus, and $\G_a^{0|n}$ denotes the purely odd abelian supergroup of dimension $(0|n)$.

	\renewcommand{\arraystretch}{1.5}
	
	\begin{thm}\label{thm G_u comp}
		For $u\in\g_{\ol{1}}$ generic of rank $k$ as described above, we have the following table describing $\widetilde{G_u}$:
		\[
		\begin{array}{|c|c|}
			\hline
			G & \widetilde{G_u}\\ 
			\hline
			GL(m|n) & GL(m-k|n-k)\times\G_a^{0|k}\\
			\hline
			SOSp(m|2n) & SOSp(m-2k|2n-2k)\times\G_a^{0|k}\\
			\hline
			D(2,1;\alpha), \alpha\in\Q & \G_m\times\G_a^{0|1}\\
			\hline
			D(2,1;\alpha), \alpha\notin\Q & \G_a^{0|1}\\
			\hline
			G(2,1) & SL(2)\times\G_a^{0|1}\\
			\hline
			AB(3,1) & SL(3)\times\G_a^{0|1}\\
			\hline
			Q(n) & Q(n-k)\times\G_a^{0|k}\\ 
			\hline
			P(n) & P(n-2k)\times\G_a^{0|k}\\
			\hline
		\end{array}
		\]
	\end{thm}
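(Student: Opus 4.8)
The plan is to chain together our two localization statements: \cref{prop to compute G_u} reduces the computation from $G$ to the centralizer $C(c)$ of $c=u^2$, and \cref{main_thm} then reduces it further to an explicit smooth closed subgroup of $C(c)$ on which $DS_u$ is elementary to evaluate. For every row of the table except $D(2,1;\alpha)$, the genericity of $u$ is arranged so that $C(c)=G'\times H$, where $G'$ is the group in the left half of the corresponding entry and $H=GL(1|1)^k$ (for $Q(n)$ instead $H=Q(1)^k$), with $u=(u_1,\dots,u_k)$ lying in $\operatorname{Lie}(H)_{\ol1}$ and each $u_i^2$ a nonzero scalar. By \cref{prop to compute G_u}, $\widetilde{G_u}\cong\widetilde{C(c)_u}$; since the product $G'\times H$ is direct and $u$ lies in the second factor, the conjugation flow of $u$ is trivial on $G'$, so $u_{ad}$ annihilates $\Bbbk[G']\subset\Bbbk[C(c)]=\Bbbk[G']\otimes\Bbbk[H]$. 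Because $DS_u$ is a symmetric monoidal functor which takes a module with zero action to itself, we get $\widetilde{C(c)_u}\cong G'\times\prod_{i=1}^k\widetilde{(GL(1|1))_{u_i}}$ (respectively $G'\times\prod_i\widetilde{Q(1)_{u_i}}$). Thus everything reduces to computing $\widetilde{GL(1|1)_u}$ and $\widetilde{Q(1)_u}$ for a single generic odd $u$, i.e. one with $u^2$ a nonzero scalar, hence central, so that $Q^2=(u^2)_{ad}=0$.

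The case of $Q(1)$ is immediate: in coordinates $\Bbbk[Q(1)]=\Bbbk[a^{\pm1},\beta]$ one finds $u_{ad}=-2\beta\,\partial_a$, which under $\beta\leftrightarrow da$ is a scalar multiple of the algebraic de Rham differential of $\G_m$; hence $DS_{u_{ad}}\Bbbk[Q(1)]=H^\bullet_{\mathrm{dR}}(\G_m)=\Bbbk\oplus\Bbbk\,a^{-1}\beta$, and since the invariant form $a^{-1}\beta$ is primitive this is $\Bbbk[\G_a^{0|1}]$ as a Hopf superalgebra, so $\widetilde{Q(1)_u}\cong\G_a^{0|1}$. For $GL(1|1)$ the same conclusion follows from one application of \cref{main_thm}: take $\VV=\OO$ and let $Y=B$ be the subgroup of lower triangular matrices with equal diagonal entries, so $B\cong\G_m\ltimes\G_a^{0|1}$ and $\II_B=(a-d,\beta)$. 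Condition (1) holds because $Z(u_{ad})=C(u)_0$ is the central $\G_m\subset B$ by \cref{lemma_vanishing_adjoint}; condition (2) holds because $u_{ad}(a-d)=0$ and $u_{ad}(\beta)=a-d$; and condition (3) --- the crucial point --- holds because on $\NN^\vee_B$ the operator $u_{ad}$ carries the odd generator $\ol\beta$ to the even generator $\ol{a-d}$, i.e. it is an isomorphism of the odd conormal line onto the even one. Hence $DS_{u_{ad}}\Bbbk[GL(1|1)]\cong DS_{u_{ad}}\Bbbk[B]$ as Hopf superalgebras, and on $B$ (where $d=a$, $\beta=0$) the operator becomes $-\gamma\,\partial_a$ on $\Bbbk[B]=\Bbbk[a^{\pm1},\gamma]$, again the de Rham differential of $\G_m$; so $DS_{u_{ad}}\Bbbk[B]=\bigwedge(a^{-1}\gamma)\cong\Bbbk[\G_a^{0|1}]$, giving $\widetilde{GL(1|1)_u}\cong\G_a^{0|1}$. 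Combined with the first paragraph this settles $GL(m|n)$, $SOSp(m|2n)$, $P(n)$ and $Q(n)$, once one records the standard identifications of $C(c)$, namely $GL(m-k|n-k)\times GL(1|1)^k$, $SOSp(m-2k|2n-2k)\times GL(1|1)^k$, $P(n-2k)\times GL(1|1)^k$, and $Q(n-k)\times Q(1)^k$ respectively.

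It remains to deal with the exceptional entries. For $G(2,1)=G(3)$ and $AB(1,3)=F(4)$ one identifies the relevant centralizer as $SL(2)\times GL(1|1)$, respectively $SL(3)\times GL(1|1)$, after which the argument above applies verbatim with $k=1$ and $G'=SL(2)$ or $SL(3)$. For $D(2,1;\alpha)$ the centralizer $C(c)$ is not of product type --- its Lie superalgebra is $\s\l(1|1)+\t$ --- so $\widetilde{C(c)_u}$ must be computed directly; this is carried out in the appendix, where the dichotomy according to $\alpha\in\Q$ or $\alpha\notin\Q$ reflects whether certain torus weights appearing in $u_{ad}$ are rationally dependent (so that an extra copy of $\G_m$ survives in the cohomology) or not. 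I expect the main obstacles to be precisely these last two items: pinning down $C(c)$ for the exceptional Lie superalgebras, which must be checked separately for $G(3)$ and $F(4)$ from the root data; and the hands-on $D(2,1;\alpha)$ computation, which is the one place where part of the even torus is not killed by $DS_u$.
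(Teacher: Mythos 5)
Your proposal is correct and follows essentially the same route as the paper: reduce via \cref{prop to compute G_u} to $C(c)=G'\times GL(1|1)^k$ (resp. $Q(1)^k$), use the tensor property of $DS_u$ to split off $\Bbbk[G']$, defer $D(2,1;\alpha)$ to the appendix, and reduce everything else to the rank-one facts $\widetilde{GL(1|1)_u}\cong\G_a^{0|1}$ and $\widetilde{Q(1)_u}\cong\G_a^{0|1}$. The only difference is that you prove these rank-one facts in-line (for $GL(1|1)$ by applying \cref{main_thm} with $Y$ the subgroup with Lie superalgebra $\Bbbk I\oplus\g_{-1}$, for $Q(1)$ by the de Rham computation), which is exactly the paper's alternative argument in \cref{prop G u in special cases} and \cref{prop special Q case}, carried out in explicit coordinates rather than via \cref{lemma odd cotang}; the paper's primary proof simply cites \cite{sherman2022symmetriesDS} for these statements.
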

	
	\begin{proof}
		For $D(2,1;\alpha)$, the explicit computation is done in the appendix.  For the rest of the cases when $G\neq Q(n)$, we have by \cref{prop to compute G_u} that
		\[
		DS_u\Bbbk[G]\cong DS_u\Bbbk[C(c)]=DS_u\Bbbk[G'\times GL(1|1)^k]=\Bbbk[G']\otimes DS_u\Bbbk[GL(1|1)^k].
		\]
		Now the computation follows from the fact that for any nonzero $v\in\g\l(1|1)_{\ol{1}}^{hom}$, we have $\widetilde{GL(1|1)_v}=\G_{a}^{0|1}$, and this was proven in Thm.~4.3, \cite{sherman2022symmetriesDS}.  We will also provide another proof of this statement in \cref{prop G u in special cases}.
		
		When $G=Q(n)$ we use the same ideas, only now we use that $\widetilde{Q(1)_v}=\G_{a}^{0|1}$ for nonzero $v\in\q(1)_{\ol{1}}$, which was proven in Thm.~4.3, \cite{sherman2022symmetriesDS} and will also be shown in \cref{prop G u in special cases}.
	\end{proof}
	
	\subsubsection{Realization of the Lie superalgebra}
	
	Let $G$ and $u$ be as in the setup of Theorem \ref{thm G_u comp}, except we exclude now the case $G=Q(n)$.  We give an explicit realization of $\operatorname{Lie}\widetilde{G_u}$ as coming from $\g$, according to the natural construction introduced in Sec.~3 of \cite{sherman2022symmetriesDS}.  
	
	In particular let us assume that $u\in\s\l(1|1)^k$ takes the form
	\[
	u=e_{\alpha_1}+\dots+e_{\alpha_k}+c_1e_{-\alpha_1}+\dots+c_ke_{-\alpha_k},
	\]
	where $c_1,\dots,c_k\in\Bbbk^\times$ are generic and $e_{\pm\alpha_i}\in\g_{\pm\alpha_i}$.  Let $\h'=\sum\limits_{i}[\g_{\alpha_i},\g_{-\alpha_i}]\sub\g_{\ol{0}}$; then $\h'\sub[u,\g]$ and is a toral subalgebra.  By Lemma 3.1 of \cite{sherman2022symmetriesDS}, if $V$ is any representation on which $\h'$ acts semisimply, we have a canonical isomorphism $DS_uV\cong DS_uV^{\h'}$.  Therefore, if we define the Lie superalgebra
	\[
	\g(u,\h'):=DS_u\left(\c(\h')/\h'\right),
	\]
	where $\c(\h')=\g^{\h'}$ is the centralizer of $\h'$ in $\g$, we will obtain a natural action of $\g(u,\h')$ on $DS_uV^{\h'}$.  By \cite{sherman2022symmetriesDS}, we have
	\[
	\g(u,\h')\cong\g'\times\Bbbk\langle e_{-\alpha_1},\dots,e_{-\alpha_k}\rangle,		
	\]
	where $\g'=\operatorname{Lie}G'$, and $G'$ is the corresponding first factor in our table from Theorem \ref{thm G_u comp}. 	 Now $\g(u,\h')$ will have a natural map to $\operatorname{Lie}\widetilde{G_u}$, as follows.  
	
	We have a natural action of $G\times G$ on $G$ by left and right translation, and thus a map $\g\times\g\to \operatorname{Vect}(G)$.  Recall that $u_{ad}$ is the image of $(u,u)\in\g\times\g$ under this map.  We see then that $\h'\times\h'\sub[(u,u),\g\times\g]$, and thus we obtain an action of 
	\[
	DS_{(u,u)}\left(\c(\h'\times\h')/(\h'\times\h')\right)=\g(u,\h')\times\g(u,\h')
	\]
	on $\Bbbk[G]$.  Further, given $(v,0)\in\g(u,\h')\times\g(u,\h')$, the action on $\Bbbk[G]_u$ is given exactly by $(v\otimes 1)\circ\Delta$, where we are using that the coproduct on $\Bbbk[\widetilde{G_u}]=\Bbbk[G]_u$ is induced by $\Delta$.  From this formula it is clear that $\g(u,\h')$ will define right-invariant vector fields on $\widetilde{G_u}$, which gives our desired map $\g(u,\h')\to\operatorname{Lie}\widetilde{G_u}$.
	
	\begin{thm}
		For $G\neq D(2,1;\alpha)$, the map $\g(u,\h')\to \operatorname{Lie}\widetilde{G_u}$ is an isomorphism.
	\end{thm}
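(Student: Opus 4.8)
The plan is to show that the natural map $\g(u,\h')\to\operatorname{Lie}\widetilde{G_u}$ is both injective and surjective by comparing dimensions and using the explicit description of $\widetilde{G_u}$ from \cref{thm G_u comp} together with the realization of $\g(u,\h')$ already recorded above. First I would observe that by \cref{thm G_u comp} (in the cases $G\ne D(2,1;\alpha)$) we have $\widetilde{G_u}\cong G'\times\G_a^{0|k}$, so $\operatorname{Lie}\widetilde{G_u}\cong\g'\times\Bbbk^{0|k}$, while the computation from \cite{sherman2022symmetriesDS} quoted above gives $\g(u,\h')\cong\g'\times\Bbbk\langle e_{-\alpha_1},\dots,e_{-\alpha_k}\rangle$, which has exactly the same dimension $\dim\g'+(0|k)$. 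Thus it suffices to prove the map is injective, or equivalently — since source and target have equal finite dimension — that it is surjective.

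The key step is to track what the map does on the two factors. On the $\g'$ factor: $\g'=\operatorname{Lie}G'$, and $G'$ sits inside $C(c)$ as a direct factor (by the generic choice of $u$, $C(c)=G'\times GL(1|1)^k$), so $G'$ acts on $\Bbbk[G]$ through right translation and survives to act on $DS_u\Bbbk[G]=\Bbbk[\widetilde{G_u}]$; by \cref{prop to compute G_u} this identification is compatible with Hopf structures, so the right-invariant vector fields coming from $\g'$ are exactly the $\g'$-factor of $\operatorname{Lie}\widetilde{G_u}$. This already shows the restriction of our map to the $\g'$ summand is the canonical inclusion, hence injective with image the $\g'$ factor. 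It remains to handle the odd part: one must show that the classes of $e_{-\alpha_1},\dots,e_{-\alpha_k}$ in $\g(u,\h')$ map to $k$ linearly independent odd elements of $\operatorname{Lie}\widetilde{G_u}$ that are moreover independent from the image of $\g'$. For this I would compute the induced right-invariant vector field on $\widetilde{G_u}=\widetilde{C(c)_u}$ directly: each $e_{-\alpha_i}$ lies in a copy of $\g\l(1|1)$, and the computation $\widetilde{GL(1|1)_v}=\G_a^{0|1}$ from \cite{sherman2022symmetriesDS} (reproved below as \cref{prop G u in special cases}) exhibits the class of $e_{-\alpha_i}$ as the generator of the Lie algebra of the $i$-th $\G_a^{0|1}$ factor. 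Since the $k$ factors $GL(1|1)$ commute and act independently, these images are linearly independent and transverse to the $\g'$ part; combining with the $\g'$ computation, the map is surjective onto all of $\operatorname{Lie}\widetilde{G_u}=\g'\times\Bbbk^{0|k}$, hence an isomorphism.

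The main obstacle is the last point: verifying that the naturally constructed right-invariant vector field attached to $e_{-\alpha_i}$ is genuinely nonzero on $\widetilde{G_u}$ (a priori it could die in cohomology) and that it lands in the $\G_a^{0|k}$ factor rather than re-entering $\g'$. This is where the explicit $GL(1|1)$ computation is essential: one reduces, using the factorization $DS_u\Bbbk[C(c)]=\Bbbk[G']\otimes DS_u\Bbbk[GL(1|1)^k]$, to the rank-one case, identifies the surviving odd generator of $\Bbbk[\widetilde{GL(1|1)_v}]=\Bbbk[\G_a^{0|1}]$, and checks by a direct coproduct calculation that the right-invariant vector field $(e_{-\alpha}\otimes 1)\circ\Delta$ acts on it nontrivially. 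Once this single normalization is pinned down, the general case follows formally from the tensor-product decomposition and the already-established behavior on the $\g'$ factor. The remaining verifications — that the map is a Lie superalgebra homomorphism, and that $\h'$ acting semisimply lets us replace $V$ by $V^{\h'}$ — are exactly Lemma 3.1 of \cite{sherman2022symmetriesDS} and the discussion preceding the theorem, so they may be invoked directly.
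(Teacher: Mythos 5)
Your proposal is correct and follows essentially the same route as the paper: reduce via \cref{prop to compute G_u} to the factorization $C(c)=G'\times GL(1|1)^k$, observe the $\g'$ factor contributes tautologically, and settle the odd directions by the rank-one $\g\l(1|1)$ computation from \cite{sherman2022symmetriesDS}. The extra dimension count and explicit tracking of the two factors that you add are harmless elaborations of the same argument.
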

	
	\begin{proof}
		We trace through the constructions of each space.  First of all, by Proposition \ref{prop to compute G_u}, we have an isomorphism of Hopf superalgebras
		\[
		DS_u\Bbbk[C(c)]\cong\Bbbk[\widetilde{G_u}].
		\]
		Here $c$ is generic in the sense that it satisfies $C(c)=C(\h')$.  Clearly $\c(\h')=\c(c)$ is the Lie superalgebra of $C(c)$, and now $C(c)=G'\times GL(1|1)^k$, where $u\in\g\l(1|1)^{k}$ is as written above.  Thus it suffices to prove the statement in the case when $\g=\g\l(1|1)$ and $u=e_{\alpha}+ce_{-\alpha}$ for $c\in\Bbbk$; this is done in 4.3.1 of \cite{sherman2022symmetriesDS}.
	\end{proof}
	
	\subsubsection{Split supergroups} We present below a special case of the general result on split supergroups given in Sec.~4.6 of \cite{sherman2022symmetriesDS}.  By a split supergroup $G$ we mean a supergroup for which $[\g_{\ol{1}},\g_{\ol{1}}]=0$.
	\begin{prop}
		Suppose that $G$ is a split supergroup, and $u\in\g_{\ol{1}}$ has a reductive stabilizer $C(u)_0$ in $G_0$.  Then $\widetilde{G_u}$ is the supergroup with $(\widetilde{G_u})_{0}=C(u)_0$ and Lie superalgebra $\c(u)/[u,\g_{\ol{0}}]$.
	\end{prop}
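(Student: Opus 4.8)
The plan is to realize $\widetilde{G_u}$ explicitly as a closed sub-supergroup of $G$ and then invoke \cref{cor_z=y}. First, since $[\g_{\ol 1},\g_{\ol 1}]=0$ we have $u^2=\tfrac12[u,u]=0$, so $c=u^2=0$, $c_{ad}=0$ is trivially semisimple, and $G$ is a $\q$-supervariety for $Q:=u_{ad}$; moreover $\g_{\ol 1}$ is an abelian ideal of $\g$, so $G\cong\g_{\ol 1}\rtimes G_0$ with $G_0$ acting by $\Ad$, and $\Bbbk[G]=\Bbbk[G_0]\otimes\bigwedge\g_{\ol 1}^{*}$. Pick a basis $\xi_1,\dots,\xi_N$ of $\g_{\ol 1}^{*}$, regarded simultaneously as linear coordinates on the $\g_{\ol 1}$-factor and as odd functions on $G$. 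A short computation from $u_{ad}=u_R+u_L=(1\otimes u_e-u_e\otimes 1)\circ\Delta$ shows that
\[
u_{ad}=\sum_i a_i\,\d_{\xi_i},\qquad a_i:=\big\langle\xi_i,(\Ad(\cdot)-1)u\big\rangle\in\Bbbk[G_0].
\]
Since $a_i(h)=0$ for all $i$ precisely when $\Ad(h)u=u$, \cref{lemma_vanishing_adjoint} gives $Z(u_{ad})=C(u)_0$, and in fact $a_i\in\II_{C(u)_0}$, so that $u_{ad}(\OO_G)\subseteq\II_{C(u)_0}\cdot\Bbbk[G]$.

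Next I would construct the subvariety. Put $\m:=[u,\g_{\ol 0}]=\im\big(\operatorname{ad}u\colon\g_{\ol 0}\to\g_{\ol 1}\big)\subseteq\g_{\ol 1}$; from $\Ad(h)[X,u]=[\Ad(h)X,u]$ one sees $\m$ is a $C(u)_0$-submodule, of dimension $\dim G_0-\dim C(u)_0$. Here the reductivity of $C(u)_0$ enters: $\g_{\ol 1}$ is then a semisimple $C(u)_0$-module, so we may choose a $C(u)_0$-stable complement $\p$ with $\g_{\ol 1}=\m\oplus\p$, and we arrange the basis $(\xi_i)$ so that $\xi_1,\dots,\xi_r$ (with $r=\dim\m$) span $\Ann(\p)\subseteq\g_{\ol 1}^{*}$. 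Set $Y:=\p\rtimes C(u)_0\subseteq\g_{\ol 1}\rtimes G_0=G$; this is a smooth closed sub-supergroup with $Y_0=C(u)_0=Z(u_{ad})$, $\II_Y=\II_{C(u)_0}\cdot\Bbbk[G]+(\xi_1,\dots,\xi_r)$, and codimension $(r|r)$ in $G$.

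I would then apply \cref{cor_z=y} with this $X=G$, $Q=u_{ad}$, and $Y$. Condition (1) of \cref{main_thm} reads $Z(\Bbbk)=C(u)_0(\Bbbk)=Y(\Bbbk)$; the inclusion $u_{ad}(\OO_G)\subseteq\II_{C(u)_0}\Bbbk[G]\subseteq\II_Y$ supplies condition (2) and the extra hypothesis $Q(\OO_G)\subseteq\II_Y$ of \cref{cor_z=y} (condition (4) is automatic for $\OO_G$). The one substantive point, and the main obstacle, is condition (3): that $u_{ad}\colon(\NN_Y^\vee|_z)_{\ol 1}\to(\NN_Y^\vee|_z)_{\ol 0}$ is an isomorphism for every $z\in C(u)_0(\Bbbk)$. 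Here $(\NN_Y^\vee|_z)_{\ol 1}$ has basis $[\xi_1],\dots,[\xi_r]$, while $(\NN_Y^\vee|_z)_{\ol 0}=\NN_{C(u)_0}^\vee|_z=\Ann(T_zC(u)_0)\subseteq T_z^{*}G_0$, and $u_{ad}$ sends $[\xi_i]\mapsto[a_i]=da_i|_z$. Identifying $T_zG_0\cong\g_{\ol 0}$ by left translation one computes $da_i|_z=-(\operatorname{ad}u)^{*}\big(\Ad(z)^{*}\xi_i\big)$, where $(\operatorname{ad}u)^{*}\colon\g_{\ol 1}^{*}\to\g_{\ol 0}^{*}$ has kernel $\Ann(\m)$ and image $\Ann\big(\ker(\operatorname{ad}u|_{\g_{\ol 0}})\big)=\NN_{C(u)_0}^\vee|_z$. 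So the claim reduces to showing that $\Ad(z)^{*}\Ann(\p)=\Ann(\Ad(z)^{-1}\p)$ is a complement to $\Ann(\m)$ in $\g_{\ol 1}^{*}$; and this holds because $\Ad(z)$ preserves $\m$ (as $z\in C(u)_0$), so $\Ad(z)^{-1}\p\oplus\m=\g_{\ol 1}$, whence $\Ann(\Ad(z)^{-1}\p)\oplus\Ann(\m)=\g_{\ol 1}^{*}$. Granting (3), \cref{cor_z=y} gives that the restriction $\Bbbk[G]\to\Bbbk[Y]$ induces $DS_{u_{ad}}\Bbbk[G]=\Bbbk[Y]$.

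Finally, since $Y\subseteq G$ is a closed sub-supergroup the restriction map is a morphism of Hopf superalgebras, and as $DS_u$ is a tensor functor this identification is one of Hopf superalgebras; hence $\widetilde{G_u}\cong Y=\p\rtimes C(u)_0$. In particular $(\widetilde{G_u})_0=Y_0=C(u)_0$, and $\operatorname{Lie}\widetilde{G_u}=\operatorname{Lie}Y=\c(u)_{\ol 0}\ltimes\p$. To conclude, the evident linear map $\c(u)_{\ol 0}\oplus\p\to\c(u)/[u,\g_{\ol 0}]$ — the identity on $\c(u)_{\ol 0}$, and on $\p$ the composite $\p\hookrightarrow\g_{\ol 1}\twoheadrightarrow\g_{\ol 1}/[u,\g_{\ol 0}]$ — is a Lie superalgebra isomorphism, using $[\c(u)_{\ol 0},\p]\subseteq\p$, $[\p,\p]=0$, and $\g_{\ol 1}=\m\oplus\p$; this identifies $\operatorname{Lie}\widetilde{G_u}$ with $\c(u)/[u,\g_{\ol 0}]$. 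The rest is bookkeeping with the semidirect-product structure; the genuinely nontrivial input is the verification of condition (3).
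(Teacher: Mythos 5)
Your proof is correct and follows essentially the same route as the paper: using reductivity of $C(u)_0$ to choose a $C(u)_0$-stable complement to $[u,\g_{\ol{0}}]$ in $\g_{\ol{1}}$, forming the subgroup $Y$ with $Y_0=C(u)_0$ and that complement as odd part, and applying the main theorem, where condition (3) reduces to the isomorphism dual to $[u,-]:\g_{\ol{0}}/\c(u)_{\ol{0}}\to[u,\g_{\ol{0}}]$. The only differences are cosmetic: you verify condition (3) at every point via the semidirect-product coordinates (the paper translates to the identity by $C(u)_0$-equivariance) and you spell out the Hopf-algebra and Lie-algebra bookkeeping that the paper leaves implicit.
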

	\begin{proof}
		Since $C(u)_0$ is reductive and $[u,\g_{\ol{0}}]$ is a $C(u)_0$-module, we may find a complimentary $C(u)_0$-module $V$ so that $\g_{\ol{1}}=[u,\g_{\ol{0}}]\oplus V$.  In particular, we have the subgroup $K$ of $G$ with $K_0=C(u)_0$ and Lie superalgebra $\k=\c(u)_{\ol{0}}\oplus V$.  We apply now Theorem \ref{main_thm} with $Y=K$ and $Z=Y_0$; notice that to check condition (3), we need only look at the identity $e\in K(\Bbbk)$ since we may translate by $C(u)_0$.  Now our map on the fiber of the conormal bundle at $e$ is exactly dual to $[u,-]:\g_{\ol{0}}/\c(u)_{\ol{0}}\to [u,\g_{\ol{0}}]$, which is clearly an isomorphism.  Thus we have
		\[
		DS_u\Bbbk[G]=DS_u\Bbbk[K]=\Bbbk[K].
		\]

	\end{proof}
	
	\subsection{Identifying the odd tangent bundle}
	We are able to compute several more supergroups $\widetilde{G_u}$, but in order to do so we need another tool.  For an affine variety $X_0$ we define the supervariety $\Pi\TT_{X_0}$, the odd tangent bundle of $X_0$, to have functions given by $\Omega^\bullet_{X_0}$, the algebra of de Rham differentials on $X_0$, where $\Omega_{X_0}^{1}$ consists of odd elements.  The supervariety $\Pi\TT_{X_0}$ admits a canonical odd, everywhere vanishing vector field $d$ given by the de Rham differential.  Observe further that if $U_0$ is an affine open subvariety of $X_0$, then the open subvariety it determines of $\Pi\TT_{X_0}$ is exactly $\Pi\TT_{U_0}$.
	
	\begin{lemma}\label{lemma odd cotang}
		Let $X$ be a smooth affine supervariety, and let $u$ be an odd, everywhere vanishing vector field on $X$ such that for all $x\in X(\Bbbk)$ we have:
		\begin{enumerate}
			\item $u(T_x^*X)_{\ol{1}}=0$;
			\item the map $u:(T^*_xX)_{\ol{0}}\to(T_x^*X)_{\ol{1}}$ is an isomorphism.
		\end{enumerate}
  Then there exists an isomorphism $X\cong \Pi\TT_{X_0}$ such that $u$ corresponds to the de Rham differential.
	\end{lemma}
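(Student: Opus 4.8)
The plan is to reduce to the case $u^{2}=0$ and then construct a single global isomorphism $X\cong\Pi\TT_{X_{0}}$ directly from a splitting of $X$, checking the isomorphism property on the associated graded. For the reduction: since $u$ vanishes at every closed point, $u(\OO_{X})\subseteq\II_{X_{0}}$, the nilradical ideal of $X$, so $u$ and $u^{2}$ preserve $\m_{x}$ and $\m_{x}^{2}$ and induce endomorphisms of $T^{*}_{x}X=\m_{x}/\m_{x}^{2}$ for every $x\in X(\Bbbk)$. Hypotheses (1) and (2) say precisely that the induced map $\ol{u}$ annihilates $(T^{*}_{x}X)_{\ol{1}}$ and restricts to an isomorphism $(T^{*}_{x}X)_{\ol{0}}\xrightarrow{\sim}(T^{*}_{x}X)_{\ol{1}}$; in particular $X$ has equal even and odd dimension (so $\Pi\TT_{X_{0}}$ has the same dimension as $X$) and $\ol{u}^{2}=0$. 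Invoking the standing hypothesis that $u^{2}$ acts semisimply: since $u^{2}(\OO_{X})\subseteq\II_{X_{0}}$, the derivation induced by $u^{2}$ on the associated graded of the (finite) $\II_{X_{0}}$-adic filtration vanishes on the degree-zero part $\Bbbk[X_{0}]$ and, being $\Bbbk[X_{0}]$-linear and fiberwise zero on $\II_{X_{0}}/\II_{X_{0}}^{2}$ (the latter by $\ol{u}^{2}=0$), vanishes everywhere on the associated graded; hence $u^{2}$ strictly raises $\II_{X_{0}}$-adic order and is nilpotent, so, being also semisimple, $u^{2}=0$.

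Since $X$ is smooth affine it is split, so I fix a ring homomorphism $s\colon\Bbbk[X_{0}]\to\Bbbk[X]$ splitting the reduction map. Because $s(f)$ is even, $f\mapsto u(s(f))$ is a derivation $\Bbbk[X_{0}]\to\Bbbk[X]_{\ol{1}}$; it therefore factors through $d\colon\Bbbk[X_{0}]\to\Omega^{1}_{X_{0}}$ and, together with $s$ on $\Omega^{0}_{X_{0}}$, extends to an algebra homomorphism $\Phi^{*}\colon\Omega^{\bullet}_{X_{0}}\to\Bbbk[X]$ with $\Phi^{*}(df)=u(s(f))$. Both $\Phi^{*}\circ d$ and $u\circ\Phi^{*}$ are odd $\Phi^{*}$-derivations $\Omega^{\bullet}_{X_{0}}\to\Bbbk[X]$; they agree on $\Bbbk[X_{0}]$ (each sending $f$ to $u(s(f))$) and on $d\Bbbk[X_{0}]$ (each sending $df$ to $0$, for $u\circ\Phi^{*}$ because $u^{2}(s(f))=0$ by the previous step), and these generate $\Omega^{\bullet}_{X_{0}}$ as an algebra, so $\Phi^{*}d=u\Phi^{*}$.

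It remains to see that $\Phi^{*}$ is an isomorphism. It carries $\Omega^{\geq p}_{X_{0}}$ into the $p$-th power of the nilradical of $\Bbbk[X]$, hence is filtered for the nilradical-adic filtrations, and the induced map on associated gradeds is an exterior-algebra homomorphism over $\Bbbk[X_{0}]$ which is $s$ in degree zero and, in degree one, fiberwise at each $x$ the isomorphism $\ol{u}\colon(T^{*}_{x}X)_{\ol{0}}\to(T^{*}_{x}X)_{\ol{1}}$ of hypothesis (2); it is therefore an isomorphism of $\Bbbk[X_{0}]$-modules in every degree, so the associated graded of $\Phi^{*}$ is an isomorphism and, the filtrations being finite, so is $\Phi^{*}$. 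The resulting isomorphism $X\cong\Pi\TT_{X_{0}}$ carries $u$ to the de Rham differential.

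The step needing the most care is the reduction to $u^{2}=0$: the homological hypothesis is genuinely used there (the statement fails without it), and verifying that the graded derivation induced by $u^{2}$ is fiberwise zero on $\II_{X_{0}}/\II_{X_{0}}^{2}$ takes a small amount of bookkeeping with odd functions. Once $u^{2}=0$ is available the construction of $\Phi$ is essentially forced, and the only remaining subtlety — identifying $\NN^{\vee}_{X_{0}}|_{x}$ with $(T^{*}_{x}X)_{\ol{1}}$ so as to recognize the degree-one associated graded as $\ol{u}|_{x}$ — is routine.
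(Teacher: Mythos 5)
Your proof is correct, and while the map you construct is essentially the same one the paper uses (fix a splitting $s:\Bbbk[X_0]\to\Bbbk[X]$ and send $df\mapsto u(s(f))$), your verification runs along a genuinely different track. The paper works locally: on an open set of $X_0$ with coordinates $x_1,\dots,x_n$ it sets $\xi_i:=u(x_i)$, defines the isomorphism to $\Pi\TT_{U_0}$ in these coordinates, and then checks independence of the choice of coordinates so that the local maps glue. You instead build a single global homomorphism $\Phi^*:\Omega^\bullet_{X_0}\to\Bbbk[X]$ via the universal property of K\"ahler differentials, verify $\Phi^*d=u\Phi^*$ on algebra generators, and obtain bijectivity from the associated graded of the $\II_{X_0}$-adic filtration, where the degree-one piece is fiberwise the isomorphism of hypothesis (2). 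This removes the gluing step entirely, at the price of the (indeed routine) identification of the fibers of $\II_{X_0}/\II_{X_0}^2$ with $(T_x^*X)_{\ol{1}}$; both arguments use smoothness in the same way (splitness of $X$ and local freeness of the relevant sheaves).

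Your preliminary reduction to $u^2=0$ is a genuine addition rather than a detour. The paper's assertion that ``it is clear that under this local isomorphism $u$ corresponds to $d$'' is exactly the point where one needs $u(\xi_i)=u^2(x_i)=0$, and this does not follow from the vanishing of $u$ together with (1) and (2) alone: on $\A^{2|2}$ the field $u=\xi_1\d_{t_1}+\xi_2\d_{t_2}+\xi_1\xi_2\d_{\xi_1}$ vanishes everywhere and satisfies (1) and (2), yet $u^2\neq0$, so it cannot correspond to the de Rham differential. Thus some homological hypothesis on $u$ is genuinely needed, exactly as you observe; it holds in every application the paper makes of the lemma, and your filtration argument --- $u^2$ strictly raises $\II_{X_0}$-adic order, hence is nilpotent, hence zero once semisimplicity is invoked --- is a correct way to supply the missing step.
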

	
	\begin{proof}
		We define an isomorphism $X\to \Pi\TT_{X_0}$ locally and prove that it glues.  
		
		First of all, because $X$ is smooth and affine, it is split by \cite{voronov1990elements}, so we can split $\Bbbk[X]\to\Bbbk[X_0]$, allowing us to identify (non-canonically) $\Bbbk[X_0]$ as a subalgebra of $\Bbbk[X]$.  Thus we do so, i.e. we choose once and for all such a subalgebra $\Bbbk[X_0]\sub\Bbbk[X]$. 
		
		Suppose that $U_0$ is an open subset of $X_0$ on which $X_0$ has coordinates $x_1,\dots,x_n$.  Write $U$ for the corresponding open subset of $X$.  Let $\xi_i:=u(x_i)\in\Bbbk[U]$; then by our assumption, $\xi_1,\dots,\xi_n$ define a full set of odd coordinates on $U$.  
		
		We now define an isomorphism $\varphi_U:U\to\Pi\TT_{U_0}$ on functions by mapping $\Bbbk[U_0]={\bigwedge}^0\Omega_{X}\sub\Bbbk[\Pi\TT_{U_0}]$ identically to our copy of $\Bbbk[U_0]$ determined by our global splitting, and sending $dx_i$ to $\xi_i$.  It is clear that under this local isomorphism $u$ corresponds to the de Rham differential $d$; in particular, on $U$ we have $u=\sum\limits\xi_i\d_{x_i}$.
		
		Now we need to check these maps glue.  However this follows from showing that the above isomorphism is independent of the coordinates we chose.  Thus suppose on $U_0$ we have another set of coordinates $y_1,\dots,y_n$.  Then if we set $\eta_j:=u(y_j)$, we have 
		\[
		\eta_j=u(y_j)=\sum\limits_{i}\xi_i\d_{x_i}(y_j)=\sum\limits_{i}\varphi_U^*(dx_i\d_{x_i}(y_j))=\varphi_U^*(dy_j).
		\]
		It follows that $\varphi_U$ is indeed independent of the choice of coordinates; it is only dependent on our choice of splitting $\Bbbk[X_0]\sub\Bbbk[X]$, but this was global and fixed from the start.
	\end{proof}

	\begin{remark}
	Note that \cref{lemma odd cotang} was proven in \cite{vaintrob1996normal} in greater generality, although they work in the smooth setting and use different tools. 
	\end{remark}

	With Lemma \ref{lemma odd cotang} in hand we now prove several fun results.  For context, we recall that it was originally Hopf who observed in \cite{hopf1964topologie} that the de Rham cohomology, $H_{dR}^\bullet(G)$, of a Lie group $G$ admits the structure of a graded commutative, cocommutative  Hopf algebra, and therefore has spectrum given by an odd abelian supergroup. 
	
	\begin{prop}\label{prop special Q case}
		Let $G=Q(n)$, and let
		\[
		u=\begin{bmatrix}
			0 & I_n\\ I_n & 0
		\end{bmatrix}
		\]
		Then $\widetilde{G_u}\cong\G_{a}^{0|n}$.
	\end{prop}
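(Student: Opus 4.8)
The plan is to recognize $u_{ad}$ as an odd, everywhere vanishing vector field of the type appearing in \cref{lemma odd cotang}, use that lemma to identify $(G,u_{ad})$ with $(\Pi\TT_{G_0},d)$, and then read off $\widetilde{G_u}$ from the de Rham cohomology of $G_{0}=GL(n)$. First I would set up the element: here $\g=\q(n)$ has $\g_{\ol 0}=\mathfrak{gl}(n)$ (so $G_{0}=GL(n)$) and $\g_{\ol 1}=\ol{\mathfrak{gl}(n)}$, with $[\ol A,\ol B]=AB+BA$ and $[A,\ol B]=\ol{[A,B]}$. Thus $u=\ol{I_{n}}$ and $c:=u^{2}=\tfrac12[u,u]=I_{n}$ lies in the centre of $\g_{\ol 0}$, so $c_{ad}=0$; in particular $c_{ad}$ is semisimple, $\Bbbk[G]$ is a $\q$-module with $Q^{2}=(u_{ad})^{2}=c_{ad}=0$, and $\widetilde{G_{u}}$ is defined. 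By \cref{lemma_vanishing_adjoint} the vanishing set of $u_{ad}$ is $C(u)_{0}$, and since $\Ad(g)u=\ol{gI_{n}g^{-1}}=u$ for every $g\in G_{0}(\Bbbk)$, we get $C(u)_{0}=G_{0}$: the vector field $u_{ad}$ vanishes on all of $G(\Bbbk)$.

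Next I would verify conditions (1) and (2) of \cref{lemma odd cotang} for the pair $(G,u_{ad})$. At the identity $e$ the linearization of the adjoint vector field $u_{ad}$ on $T^{*}_{e}G=\g^{*}$ is $\pm\operatorname{ad}(u)^{*}$ (restricted and possibly rescaled on the two parity components). Now $[u,\g_{\ol 0}]=\{\ol{[I_{n},A]}:A\in\mathfrak{gl}(n)\}=0$, so $\operatorname{ad}(u)\colon\g_{\ol 0}\to\g_{\ol 1}$ is zero; and $[u,\ol C]=I_{n}C+CI_{n}=2C$, so $\operatorname{ad}(u)\colon\g_{\ol 1}\to\g_{\ol 0}$ is an isomorphism. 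Dualizing, $u_{ad}$ annihilates $(T^{*}_{e}G)_{\ol 1}$ and restricts to an isomorphism $(T^{*}_{e}G)_{\ol 0}\xrightarrow{\sim}(T^{*}_{e}G)_{\ol 1}$, which are precisely conditions (1)--(2) at $e$. For an arbitrary $x\in G(\Bbbk)$ one transports: $u_{ad}$ is the image of $(u,u)\in\g\times\g$ under the $(G\times G)$-equivariant map $\g\times\g\to\operatorname{Vect}(G)$ attached to the action $(g_{1},g_{2})\cdot g=g_{1}gg_{2}^{-1}$, and since $\Ad(g_{i})u=u$ for $g_{i}\in G_{0}$ the diffeomorphism $g\mapsto g_{1}gg_{2}^{-1}$ preserves $u_{ad}$ for all $(g_{1},g_{2})\in G_{0}(\Bbbk)\times G_{0}(\Bbbk)$; as this family acts transitively on $G(\Bbbk)$, conditions (1)--(2) hold at every point. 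By \cref{lemma odd cotang} there is then an isomorphism $G\cong\Pi\TT_{G_{0}}$ carrying $u_{ad}$ to the de Rham differential $d$.

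Consequently $DS_{u_{ad}}\Bbbk[G]\cong DS_{d}\,\Omega^{\bullet}_{G_{0}}=H^{\bullet}_{dR}(GL(n))$ as graded superalgebras. By the classical computation (see \cite{hopf1964topologie}; equivalently $GL(n)$ has the homotopy type of $U(n)$), $H^{\bullet}_{dR}(GL(n))$ is an exterior algebra $\Lambda[\theta_{1},\dots,\theta_{n}]$ on $n$ generators in cohomological degrees $1,3,\dots,2n-1$, all odd. Hence $\Bbbk[\widetilde{G_{u}}]\cong\Lambda[\theta_{1},\dots,\theta_{n}]$ as a superalgebra; it is a local ring with nilpotent augmentation ideal $\m$, so $\widetilde{G_{u}}$ is connected with $(\widetilde{G_{u}})_{0}=\Spec\Bbbk$ a point, and $\m/\m^{2}=\langle\theta_{1},\dots,\theta_{n}\rangle$ is purely odd, so $\operatorname{Lie}\widetilde{G_{u}}$ is the $(0|n)$-dimensional abelian Lie superalgebra. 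By the Harish-Chandra pair description of affine supergroups in characteristic zero, a supergroup with trivial even part and purely odd $n$-dimensional Lie superalgebra is forced to be $\G_{a}^{0|n}$; hence $\widetilde{G_{u}}\cong\G_{a}^{0|n}$.

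The step I expect to be the main obstacle is the last one. \cref{lemma odd cotang} produces only an isomorphism of supervarieties-equipped-with-an-odd-vector-field, so a priori it identifies $\Bbbk[\widetilde{G_{u}}]$ with $H^{\bullet}_{dR}(GL(n))$ merely as an algebra, not as a Hopf algebra; and a Hopf structure on $\Lambda[\theta_{1},\dots,\theta_{n}]$ need not be that of $\G_{a}^{0|n}$ (already for $n=1$ one may take $\Delta(\theta)=\theta\otimes 1+1\otimes\theta+a\,\theta\otimes\theta$ with $a\neq 0$). The argument above circumvents this by extracting $(\widetilde{G_{u}})_{0}$ and $\operatorname{Lie}\widetilde{G_{u}}$ from the algebra alone and invoking the classification of connected supergroups with trivial even part. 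Alternatively one may use that the de Rham cohomology of a Lie group is cocommutative and primitively generated (as recalled before the statement), which pins down the coproduct — but only after checking that the identification of \cref{lemma odd cotang} is compatible with $\Delta$, so the route through Harish-Chandra pairs is the cleaner one.
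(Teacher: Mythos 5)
Your proposal is correct and follows essentially the same route as the paper: the adjoint vector field of $u$ vanishes everywhere, the computations $[u,\g_{\ol{0}}]=0$ and $[u,-]:\g_{\ol{1}}\xrightarrow{\sim}\g_{\ol{0}}$ let \cref{lemma odd cotang} identify $(Q(n),u_{ad})$ with $(\Pi\TT_{GL(n)},d)$, giving $\Bbbk[Q(n)]_u\cong H^\bullet_{dR}(GL(n))$. The only difference is in the final bookkeeping: where you pass through Harish--Chandra pairs, the paper invokes Hopf's theorem (recalled just before the proposition) that this cohomology is a graded commutative, cocommutative Hopf algebra, hence the function algebra of an odd abelian supergroup, namely $\G_a^{0|n}$.
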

	
	\begin{proof}
		In this case the adjoint vector field of $u$ vanishes everywhere, and we have $[u,\g_{\ol{0}}]=0$ and $[u,-]:\g_{\ol{1}}\to\g_{\ol{0}}$ is an isomorphism.  Thus Lemma \ref{lemma odd cotang} applies, and we obtain that
		\[
		\Bbbk[Q(n)]_u=H_{dR}^\bullet(GL(n))\cong\G_{a}^{0|n}.
		\]
	\end{proof}
	
	\begin{prop}\label{prop G u in special cases}
		\begin{enumerate}
			\item Let $G=GL(n|n)$, and let
			\[
			u=\begin{bmatrix}
				0 & I_{n}\\ \lambda I_{n} & 0
			\end{bmatrix}
			\]
			in block form, where $\lambda\in\Bbbk$.  Then 
			\[
			\widetilde{GL(n|n)_u}\cong\G_{a}^{0|n}.
			\]
			\item Let $G=P(n)$, and let 
			\[
			u=\begin{bmatrix}
				0 & A\\ 0 & 0
			\end{bmatrix}
			\]
			in block form, where $A$ is any invertible symmetric matrix.  Then
			\[
			\widetilde{P(n)_u}\cong\mu_2\times\G_{a}^{0|\lfloor n/2\rfloor}.
			\]
			\item Let $G=P(2n)$, and let
			\[
			u=\begin{bmatrix}
				0 & 0 \\ B & 0
			\end{bmatrix}
			\]
			in block form, where $B$ is any invertible skew-symmetric matrix.  Then
			\[
			\widetilde{P(2n)_u}\cong\G_{a}^{0|n}.
			\]
		\end{enumerate}
	\end{prop}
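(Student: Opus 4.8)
All three parts follow a common pattern, and the plan is to treat them uniformly. In each case $u^{2}$ is central in $\g$ (it is $0$ for $P(n)$ and $P(2n)$, and $\lambda I$ for $GL(n|n)$), so $\operatorname{ad}(u)^{2}=\operatorname{ad}(u^{2})=0$ on $\g$; hence $[u,\g_{\ol 0}]\subseteq\c(u)_{\ol 1}$ and $[u,\g_{\ol 1}]\subseteq\c(u)_{\ol 0}$. The first and main step is a direct block-matrix computation showing that the invertibility of $A$, resp.\ $B$ (and, for $GL(n|n)$, the fact that $\lambda$ is unrestricted), forces these two inclusions to be equalities --- equivalently, that $DS_{u}\g=0$ for the adjoint representation, equivalently that $\operatorname{rk}\bigl(\operatorname{ad}(u)|_{\g_{\ol 0}}\bigr)+\operatorname{rk}\bigl(\operatorname{ad}(u)|_{\g_{\ol 1}}\bigr)=\dim\g_{\ol 0}=\dim\g_{\ol 1}$. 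The same computation, via \cref{lemma_vanishing_adjoint}, identifies the vanishing set $Z(u_{ad})=C(u)_{0}$ of the adjoint vector field: it is $GL(n)$ in part (1), the orthogonal group $O(n)$ in part (2), and the symplectic group $Sp(2n)$ in part (3).

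Next I would localize to a subgroup. Since $C(u)_{0}$ is reductive, choose a $C(u)_{0}$-stable complement $\k_{\ol 1}$ to $\c(u)_{\ol 1}=[u,\g_{\ol 0}]$ inside $\g_{\ol 1}$; in each case this can be arranged with $\k_{\ol 1}$ abelian, given explicitly by a suitable off-diagonal block of the odd matrices. Then $\k:=\c(u)_{\ol 0}\oplus\k_{\ol 1}$ is a Lie subalgebra with $\k_{\ol 1}$ an abelian ideal, and it integrates to a closed subsupergroup $K\subseteq G$ with $K_{0}=C(u)_{0}$; since $\k_{\ol 1}$ is an abelian ideal, the inclusion $K_{0}\hookrightarrow K$ admits a retraction $K\to K_{0}$. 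Moreover $[u,\c(u)_{\ol 0}]=0$ and $[u,\k_{\ol 1}]\subseteq\c(u)_{\ol 0}$, so $[u,\k]\subseteq\k$; hence $u$ normalizes $K$ and $u_{ad}$ restricts to a vector field $u_{ad}|_{K}$ on $K$. Now apply \cref{main_thm} with $Y=K$ and $\VV=\OO_{G}$ (so hypothesis (4) is automatic): (1) holds because $Z(u_{ad})=K_{0}\subseteq K$; (2) holds because $u_{ad}(\II_{K})\subseteq\II_{K}$, just noted; and for (3), $K_{0}$-equivariance reduces us to the point $e$, where $u_{ad}:(\NN^{\vee}_{K}|_{e})_{\ol 1}\to(\NN^{\vee}_{K}|_{e})_{\ol 0}$ is dual to $\operatorname{ad}(u):\g_{\ol 0}/\c(u)_{\ol 0}\to\g_{\ol 1}/\k_{\ol 1}$, an isomorphism exactly because $\g_{\ol 1}=[u,\g_{\ol 0}]\oplus\k_{\ol 1}$. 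So $DS_{u}\Bbbk[G]\cong DS_{u}\Bbbk[K]$, and as the identification is induced by the restriction $\Bbbk[G]\to\Bbbk[K]$, a morphism of Hopf superalgebras, it is an isomorphism of supergroups $\widetilde{G_{u}}\cong\widetilde{K_{u}}$.

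To compute $\widetilde{K_{u}}$ I would apply \cref{lemma odd cotang} to $(K,u_{ad}|_{K})$, just as in the proof of \cref{prop special Q case}. The vector field $u_{ad}|_{K}$ is odd and everywhere vanishing on $K$, since its zero locus contains $K_{0}$ and $K_{0}$ contains every closed point of $K$. At $e$ --- hence, by $K_{0}$-equivariance, at every closed point --- conditions (1) and (2) of that lemma translate, under $T^{*}_{e}K=\k^{*}$, into: $\operatorname{ad}(u)(\k)\subseteq\k_{\ol 0}$, and $\operatorname{ad}(u):\k_{\ol 1}\to\k_{\ol 0}=\c(u)_{\ol 0}$ is an isomorphism. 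The first holds because $\operatorname{ad}(u)(\c(u)_{\ol 0})=0$ and $\operatorname{ad}(u)(\k_{\ol 1})\subseteq\c(u)_{\ol 0}$; the second because $\operatorname{ad}(u)|_{\k_{\ol 1}}$ is injective ($\k_{\ol 1}\cap\c(u)_{\ol 1}=0$) and surjective onto $\c(u)_{\ol 0}$ ($\operatorname{ad}(u)(\k_{\ol 1})=[u,\g_{\ol 1}]=\c(u)_{\ol 0}$, by the first step). Thus $K\cong\Pi\TT_{K_{0}}$ with $u_{ad}|_{K}$ going to the de Rham differential $d$; taking the splitting $\Bbbk[K_{0}]\hookrightarrow\Bbbk[K]$ of \cref{lemma odd cotang} to be the one induced by the retraction $K\to K_{0}$ makes this an isomorphism of supergroups. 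Hence $\Bbbk[\widetilde{G_{u}}]\cong DS_{d}\Omega^{\bullet}_{K_{0}}=H^{\bullet}_{dR}(K_{0})$ as Hopf superalgebras, and by Hopf's theorem (recalled before the proposition) $\widetilde{G_{u}}\cong\Spec H^{\bullet}_{dR}(K_{0})$. The proposition then follows from the classical computations of these rings: $H^{\bullet}_{dR}(GL(n))$ is an exterior algebra on $n$ odd generators, giving $\widetilde{GL(n|n)_{u}}=\G_{a}^{0|n}$; $H^{\bullet}_{dR}(O(n))\cong\Bbbk[\mu_{2}]\otimes H^{\bullet}_{dR}(SO(n))$ with $H^{\bullet}_{dR}(SO(n))$ exterior on $\lfloor n/2\rfloor$ odd generators, giving $\widetilde{P(n)_{u}}=\mu_{2}\times\G_{a}^{0|\lfloor n/2\rfloor}$; and $H^{\bullet}_{dR}(Sp(2n))$ is exterior on $n$ odd generators, giving $\widetilde{P(2n)_{u}}=\G_{a}^{0|n}$.

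The decisive point --- and the main obstacle --- is the first step: the equalities $[u,\g_{\ol 0}]=\c(u)_{\ol 1}$ and $[u,\g_{\ol 1}]=\c(u)_{\ol 0}$ (i.e.\ $DS_{u}\g=0$), together with the identification of $C(u)_{0}$. This is exactly where the invertibility of $A$ and $B$ is used, and these equalities are precisely what allows a single complement $\k_{\ol 1}$ to meet both the hypothesis of \cref{main_thm} (that $\k_{\ol 1}$ complement $[u,\g_{\ol 0}]$ in $\g_{\ol 1}$) and the hypotheses of \cref{lemma odd cotang} (that $\k_{\ol 1}$ complement $\c(u)_{\ol 1}$ and that $\operatorname{ad}(u)|_{\k_{\ol 1}}$ be onto $\c(u)_{\ol 0}$). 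A secondary, routine chore is keeping track of the Hopf-superalgebra structures through both reductions.
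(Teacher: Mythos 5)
Your proposal is correct and follows essentially the same route as the paper's proof: reduce to the subgroup $K$ with $K_0=C(u)_0$ and odd part the opposite off-diagonal block (your complement $\k_{\ol 1}$ coincides with the paper's $\g_{\mp1}$) via \cref{main_thm}, then identify $K\cong\Pi\TT_{K_0}$ by \cref{lemma odd cotang} and read off $H^\bullet_{dR}(GL(n))$, $H^\bullet_{dR}(O(n))$, $H^\bullet_{dR}(Sp(2n))$. The extra care you take with the Hopf structures and the equivariance reductions only makes explicit what the paper leaves implicit.
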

	
	\begin{proof}
		The proofs of (1), (2), and (3) are identical.  Namely in each case we have a decomposition of our Lie superalgebra $\g=\g_{-1}\oplus\g_0\oplus\g_1$ where we can assume for (2) and (3) that $u\in\g_1$, and for (1) we write $u=u_++\lambda u_-$, where $u_{\pm}\in \g_{\pm1}$.  
		
		Let $K\sub G_0$ be the stabilizer of $u$ in $G_0$; then we  have in each case that $[u,\g_{-1}]=\k:=\operatorname{Lie}K$.  Further in all cases $u$ defines an isomorphism:
		\[
		[u,-]:(\g_0\oplus\g_{-1})/(\k\oplus\g_{-1})\to \g_{1}.
		\]
		Let $Y\sub G$ be the subgroup with $Y_0=K$ and with Lie superalgebra $\k\oplus\g_{-1}$.  Then the hypotheses of Theorem \ref{main_thm} are satisfied with $Z=Y_0$, and so we learn that
		\[
		DS_u\Bbbk[G]=DS_u\Bbbk[Y].
		\]
		However on $Y$ we have that $u$ is everywhere vanishing and satisfies all the hypotheses of Lemma \ref{lemma odd cotang}, so that we obtain
		\[
		DS_u\Bbbk[Y]\cong H_{dR}^\bullet(K).
		\]
		Thus to finish the proof we see that for (1) we have $K=GL(n)$, for $(2)$ we have $K=O(n)$, and (3) we have $K=Sp(2n)$.
	\end{proof}
	
	\begin{remark}
		We may realize the Lie superalgebra of $\widetilde{G_u}$ for the three cases presented in \cref{prop G u in special cases} according to the construction described in Section 5 of \cite{sherman2022symmetriesDS}.  Namely, if we set $\UU(\g,\k):=(\UU\g)^\k/(\k\UU\g+\UU\g\k)^\k$, then $\operatorname{ad}(u)$ preserves $\UU(\g,\k)$ and we obtain a natural map
		\[
		\UU(\g,\k)_u\to\UU\operatorname{Lie}\widetilde{G_u}.
		\] 
		Then the map $\UU\g_{-1}^{\k}\to\UU(\g,\k)$ lands in $\ker(u)$, and in fact maps injectively into $\UU(\g,\k)_u$, inducing an isomorphism:
		\[
		\UU\g_{-1}^{\k}\to\UU\operatorname{Lie}\widetilde{G_u}.
		\]
	\end{remark}
	
	\subsection{Determination of \texorpdfstring{$\widetilde{G_u}$}{~Gu} for all \texorpdfstring{$u$}{u} when \texorpdfstring{$G=GL(m|n)$}{G=GL(m|n)} or \texorpdfstring{$Q(n)$}{Q(n)}}  In the following corollary, we use the notion of rank introduced in Section 3 of \cite{sherman2022symmetriesDS}.
	\begin{cor}
		\begin{enumerate}
			\item 	Let $u\in\g\l(m|n)_{\ol{1}}$ with $u^2$ semisimple, and such that $u$ is of rank $r$.  Then
			\[
			\widetilde{GL(m|n)_u}\cong GL(m-r|n-r)\times \G_{a}^{0|r}.
			\]
			\item Let $u\in\q(n)_{\ol{1}}^{hom}$ with $u^2$ semisimple, and such that $u$ is of rank $r$ and \newline $u=\begin{bmatrix}
				0 & B\\ B &0\end{bmatrix}$, where $B$ is of rank $s$.  Then
			\[
			\widetilde{Q(n)_u}\cong Q(n-r)\times \G_{a}^{0|s}.
			\]
		\end{enumerate}
	\end{cor}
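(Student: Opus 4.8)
The plan is to reduce both parts to the building blocks already established in this section, and then to do rank bookkeeping. First I would set $c=u^2$ (so $c=T_{B^2,0}$ in the $Q(n)$ case), which acts semisimply by hypothesis, and apply \cref{prop to compute G_u} to replace $G$ by $C(c)$. Since $c$ is block-scalar on the standard representation, $C(c)$ is a product: $C(c)=\prod_i GL(a_i|b_i)$ for $G=GL(m|n)$, indexed by the eigenvalues of $c$, and $C(c)=\prod_j Q(n_j)$ for $G=Q(n)$, indexed by the eigenvalues of $B^2$. As $DS_u$ is a symmetric monoidal functor compatible with the Hopf structure, forming $\widetilde{(-)_u}$ commutes with these finite products, so it suffices to compute $\widetilde{H_{u'}}$ for a single factor $H$, where the block $u'$ of $u$ satisfies $(u')^2=\lambda\cdot\mathrm{id}$ on the standard representation of $H$.

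Next I would dispose of the factors with $\lambda\neq 0$. In the $GL$ case such a factor is $GL(\ell|\ell)$ with $u'$ of invertible square, hence $GL(\ell)\times GL(\ell)$-conjugate to the standard element of \cref{prop G u in special cases}(1), so $\widetilde{GL(\ell|\ell)_{u'}}=\G_a^{0|\ell}$. In the $Q$ case the factor is $Q(\ell)$ with $u'=T_{0,B'}$ and $(B')^2=\lambda I$; since $x^2-\lambda$ is separable in characteristic zero, $B'$ is semisimple with eigenvalues $\pm\sqrt{\lambda}$, so after conjugation $u'$ lies in a block-diagonal subgroup $Q(p)\times Q(q)\subseteq Q(\ell)$ on which the two blocks are $T_{0,\pm\sqrt{\lambda}\,I}$. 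I would then apply \cref{main_thm} with $Y=Q(p)\times Q(q)$ --- its hypotheses are checked directly, using \cref{lemma_vanishing_adjoint} for (1) and a short $\operatorname{ad}(u')$-computation on the off-block subspaces for (3) --- and conclude with \cref{prop special Q case} (whose proof applies equally to $T_{0,\lambda'I}$ for any $\lambda'\neq0$) on each block, giving $\widetilde{Q(\ell)_{u'}}=\G_a^{0|\ell}$. Either way such a factor contributes only a $\G_a^{0|\ell}$, with $\ell=\operatorname{rank}B'$ on that block.

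It then remains to treat the single factor with $\lambda=0$, where $u'$ (equivalently $B'$) is nilpotent of square zero. For $GL$ I need $\widetilde{GL(a|b)_{u'}}=GL(a-\rho|b-\rho)\times\G_a^{0|\rho}$ with $\rho=\operatorname{rank}u'$, and for $Q$ I need $\widetilde{Q(d)_{u'}}=Q(d-2\sigma)\times\G_a^{0|\sigma}$ with $\sigma=\operatorname{rank}B'$. These are exactly the cases computed in \cite{sherman2022symmetriesDS}; alternatively I would recover them from \cref{main_thm} with $Y$ the subgroup whose reduced part is the stabilizer $C(u')_0$ and whose Lie superalgebra is $\c(u')_{\ol 0}$ together with a $C(u')_0$-stable complement of $[u',\g_{\ol 0}]$ in $\g_{\ol 1}$; on $Y$ the field $u'$ is everywhere vanishing, and I would finish with \cref{lemma odd cotang}, in a mild refinement leaving a residual $GL$- or $Q$-factor untouched. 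I expect this $\lambda=0$ step to be the main obstacle, since $C(u')_0$ is in general \emph{not} reductive and one needs the full strength of \cref{main_thm} (non-reductive $Z$) to verify conditions (1)--(3) and to identify the residual factor.

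Finally I would assemble the pieces. The key input is that the rank of Section~3 of \cite{sherman2022symmetriesDS} is additive over a product of Lie superalgebras and is unaffected by replacing $\g$ with $\c(c)$, so that $\operatorname{rank}u=\sum_i\operatorname{rank}u'_i$, with $\operatorname{rank}u'_i$ equal to the block size on each $\lambda\neq0$ factor and to $\rho$, resp. $2\sigma$, on the $\lambda=0$ factor in the $GL$, resp. $Q$, case. Substituting into $m=a_0+\sum_i\ell_i$ and the analogous identities, the $GL$-factor emerges as $GL(m-r|n-r)$ and the abelian factor as $\G_a^{0|r}$; in the $Q$ case the surviving factor is $Q(n-r)$ and the abelian part has total odd dimension $\sigma+\sum_i\ell_i=\operatorname{rank}B=s$. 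This yields the two stated isomorphisms.
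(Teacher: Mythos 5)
Your proposal is correct and takes essentially the same route as the paper: reduce to $C(c)$ via \cref{prop to compute G_u}, decompose $u$ blockwise over the factors of $C(c)$, handle blocks with invertible square via \cref{prop G u in special cases} and \cref{prop special Q case}, handle square-zero blocks by the cited results of \cite{sherman2022symmetriesDS}, and finish with rank bookkeeping. Your additional care in the $Q(n)$ case --- reducing a block with $(B')^2=\lambda I$ but mixed eigenvalues $\pm\sqrt{\lambda}$ to $Q(p)\times Q(q)$ via \cref{main_thm} before invoking \cref{prop special Q case} --- is a sensible refinement of the paper's terser assertion that such a block is a multiple of the standard matrix, but it does not constitute a different method.
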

	\begin{proof}
		For both of the cases above, we apply Proposition \ref{prop to compute G_u}.  In the $GL$ case, $C(c)$ will be a product of $GL$'s of smaller rank, say $G_1\times\cdots\times G_\ell$, where $G_i=GL(a_i|b_i)$; then we can decompose $u=u_1+\dots+u_{\ell}$, and in each case $u_i\in\g_{i}$ will have that $u_i^2$ is central in $\g_i$.  Therefore either $u_i^2=0$ and we can conclude from the results of Thm.~4.3 of \cite{sherman2022symmetriesDS}, or $a_i=b_i$ and $u_i$ is conjugate to the matrix given in (1) of Proposition \ref{prop G u in special cases}.
		
		For the $Q$ case, $C(c)$ will be a product of $Q$'s of smaller rank, $Q(b_1)\times\cdots\times Q(b_\ell)$, and we can again decompose $u=u_1+\dots+u_{\ell}$.  Then once again either $u_i\in\q(b_i)$ will satisfy $u_i^2=0$ allowing us to conclude from Thm.~4.3 of \cite{sherman2022symmetriesDS}, or it will be a multiple of the matrix in Proposition \ref{prop special Q case}.
	\end{proof}

	\section{Homogeneous superspaces}
	
	Recall that a supergroup $G$ is called quasireductive if $G_0$ is reductive.  We will always assume in this section that $G$ is quasireductive, and write $\Rep G$ for the category of $G$-modules.  We refer to \cite{Ser2} for more on the theory of such supergroups.  Write $\g$ for its Lie superalgebra, which will be quasireductive, and set
	\[
	\g_{\ol{1}}^{hom}:=\{u\in\g_{\ol{1}}:u^2\text{ is semisimple}\}.
	\]
	An element $u\in\g_{\ol{1}}^{hom}$ has that $u^2$ acts semisimply on any $G$-module, and thus the functor $DS_u$ is defined on $\Rep G$.
	
    We will consider homogeneous spaces $G/K$ for $K$ a quasireductive subgroup of $G$; in this case $G/K$ is a smooth affine supervariety.  See \cite{MaT} for more on the construction of this quotient and proof of affinity of $G/K$. 
	
	The Lie superalgebra $\g$ acts by vector fields on $G/K$ via infinitesimal left translation; for an element $v\in\g$ we will, by abuse of notation, write $v$ for the vector field it defines on $G/K$.  We continue to write $Z(v)$ for the vanishing set of $v$ on $G/K$.

	\begin{lemma}
		For $v\in\g$, and $g\in G(\Bbbk)$, $v$ vanishes at the point $gK$ on $G/K$ if and only if $\Ad(g^{-1})(v)\in\k$.  
	\end{lemma}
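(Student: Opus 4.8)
The plan is to reduce the statement to the special case of the base point $eK$ by translating, and then to invoke the standard description of the tangent space of a homogeneous space.

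Fix $g\in G(\Bbbk)$; since $G(\Bbbk)=G_0(\Bbbk)$, the point $g$ is even and left multiplication by $g$ defines an automorphism $L_g\colon G/K\to G/K$, $hK\mapsto ghK$, with $L_g\circ L_h=L_{gh}$. The first step is to show that $L_{g^{-1}}$ pushes the vector field $v$ forward to the vector field $\Ad(g^{-1})v$. Indeed, $v$ is the infinitesimal generator of the family of automorphisms $L_{\exp(tv)}$, and inside $G$ one has $g^{-1}\exp(tv)g=\exp\!\big(t\,\Ad(g^{-1})v\big)$, so conjugating this family by the automorphism $L_{g^{-1}}$ gives
\[
L_{g^{-1}}\circ L_{\exp(tv)}\circ L_{g}=L_{g^{-1}\exp(tv)g}=L_{\exp(t\,\Ad(g^{-1})v)} ,
\]
and differentiating at $t=0$ yields $(L_{g^{-1}})_{*}v=\Ad(g^{-1})v$. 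As $L_{g^{-1}}$ is an isomorphism carrying $gK$ to $eK$, it follows that $v$ vanishes at $gK$ if and only if $\Ad(g^{-1})v$ vanishes at $eK$. (Alternatively, and avoiding exponentials in the super setting, one can argue algebraically from $G$-equivariance of the smooth projection $\pi\colon G\to G/K$: the left-translation vector field $v$ on $G/K$ is $\pi$-related to the right-invariant vector field $v^{R}$ on $G$ determined by $v$, so $v(gK)=d\pi_g(v^{R}(g))$ with $v^{R}(g)=(dR_g)_e v$, while $\ker d\pi_g=(dL_g)_e\k$; together with $(dL_g)_e^{-1}(dR_g)_e=\Ad(g^{-1})$ this gives the same conclusion.)

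It remains to check that a left-translation vector field $w$ on $G/K$ vanishes at $eK$ exactly when $w\in\k$. For this I would use the canonical identification $T_{eK}(G/K)\cong\g/\k$: the projection $\pi\colon G\to G/K$ is smooth and $K$-equivariant, so $d\pi_e\colon\g=T_eG\to T_{eK}(G/K)$ is surjective with kernel $T_e(\pi^{-1}(eK))=T_eK=\k$; and under this identification the evaluation map $w\mapsto w(eK)$ for left-translation vector fields is exactly the canonical projection $\g\to\g/\k$ (up to an overall sign, irrelevant to vanishing). Hence $w(eK)=0$ iff $w\in\k$, and taking $w=\Ad(g^{-1})v$ together with the previous paragraph proves the lemma.

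I do not expect a genuine obstacle here; the only points requiring care are (i) that $gK$ is an even point, so $L_g$ is an honest automorphism and all tangent spaces in play are ordinary super vector spaces, with the evaluation of an odd vector field producing an odd tangent vector — none of the steps above is parity-sensitive; and (ii) fixing once and for all consistent conventions for ``infinitesimal left translation'' and for $\Ad$, since various natural choices differ by signs. As the conclusion only concerns membership in $\k$, any consistent choice yields the same statement.
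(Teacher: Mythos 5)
Your proof is correct and is essentially the same as the paper's: the paper disposes of the lemma in one line by noting that the stabilizer of $gK$ is $gKg^{-1}$, which is exactly what your translation-to-the-base-point argument (via $(L_{g^{-1}})_*v=\Ad(g^{-1})v$, or the coordinate-free version with $\ker d\pi_g=(dL_g)_e\k$) combined with the identification $T_{eK}(G/K)\cong\g/\k$ spells out. No gaps; your parenthetical algebraic variant is the right way to sidestep exponentials of odd elements in the super setting.
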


	\begin{proof}
		Indeed, the stabilizer of $gK$ is $gKg^{-1}$.
	\end{proof}

	For a homogeneous element $v\in\g$, we write $N_G(v)$ for the normalizer of $v$ in $G$, and $C_G(v)$ for the centralizer of $v$ in $G$.  Notice that for a homogeneous space $G/K$, if $v\in\k$ then $N_{G_0}(v)\cdot v\sub\k$, and thus we have
	\[
	N_{G_0}(v)K\sub Z(v).
	\]	
	\begin{prop}\label{prop homog spaces}
		Let $G$ be quasireductive and $K\sub G$ a closed quasireductive subgroup.  Let $u\in\k\cap \g_{\ol{1}}^{hom}$ with $c=u^2$, and suppose that
		\[
		Z(u)\sub N_{G_0}(c)K.
		\]
		Then 
		\[
		DS_u\Bbbk[G/K]=DS_u\Bbbk[N_G(c)/(N_G(c)\cap K)].
		\]
	\end{prop}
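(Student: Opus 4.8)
The plan is to apply the main localization theorem (\cref{main_thm}) to the supervariety $X = G/K$ with the homological vector field $Q = u$ (the vector field coming from infinitesimal left translation by $u$), the trivial bundle $\VV = \OO_X$, and the candidate closed subvariety
\[
Y = N_G(c)/(N_G(c)\cap K).
\]
Note that $c = u^2$ acts semisimply on $\Bbbk[G/K]$ since $u \in \g_{\ol 1}^{hom}$, so $G/K$ is genuinely a $\q$-supervariety; and since $\VV = \OO_X$, condition (4) of \cref{main_thm} is automatic (as observed in the corollaries). So the work is to verify conditions (1), (2), (3), after which \cref{main_thm} gives $DS_u\Bbbk[G/K] = DS_u\Bbbk[Y]$, which is precisely the claimed identity since $\Bbbk[Y]=\Bbbk[N_G(c)/(N_G(c)\cap K)]$.

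First I would check that $Y$ is a smooth closed subvariety of $X$: since $K$ and $N_G(c)$ are both quasireductive (the latter because $N_G(c) = N_G(c)_0$-equivariantly... actually $N_G(c)$ should itself be quasireductive as a normalizer of a semisimple element in a quasireductive group), $N_G(c)/(N_G(c)\cap K)$ is a smooth affine supervariety, and the inclusion $N_G(c) \hookrightarrow G$ induces a closed embedding of homogeneous spaces. For condition (1), $Z(\Bbbk) \subseteq Y(\Bbbk)$: by the lemma preceding the proposition, $Z(u)$ consists of cosets $gK$ with $\Ad(g^{-1})(u)\in\k$; the hypothesis $Z(u)\subseteq N_{G_0}(c)K$ forces every such coset to lie in the image of $N_G(c)$, i.e. in $Y(\Bbbk)$. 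For condition (2), $u(\II_Y)\subseteq\II_Y$: this says the vector field $u$ is tangent to $Y$. Since $u\in\k$ and in fact $u$ normalizes $\n := \operatorname{Lie}N_G(c)$ (because $[u,c]=0$ means $u\in\c_\g(c)\subseteq\n$, and $\n$ is an ideal-like... more precisely $\operatorname{ad}(u)$ preserves $\n$ since $\n = \c_\g(c)$ when $c$ is semisimple), the left-translation vector field $u$ is tangent to the orbit $N_G(c)/(N_G(c)\cap K)$; I would spell this out by noting that at a point $nK\in Y$ the tangent space is $(\operatorname{Ad}(n)\n)/(\ldots)$ and $u$ lands in it because $u\in\n$.

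The main obstacle is condition (3): that $u:(\NN_Y^\vee|_z)_{\ol 1}\to(\NN_Y^\vee|_z)_{\ol 0}$ is an isomorphism for every $z\in Z(\Bbbk)$. By $N_{G_0}(c)$-equivariance (translating by elements of $N_{G_0}(c)$, which act on $Y$ and commute appropriately) it suffices to check this at the base point $z_0 = eK$, provided $z_0\in Z(u)$ — and indeed $u\in\k$ so $u$ vanishes at $eK$. At $z_0$, the normal space $\NN_Y|_{z_0}$ is identified with $\g/(\n + \k)$ and the conormal space with its dual; the action of $u$ on the conormal space is dual to the action of $\operatorname{ad}(u)$ on $\g/(\n+\k)$. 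So condition (3) becomes: $\operatorname{ad}(u): (\g/(\n+\k))_{\ol 0}\to(\g/(\n+\k))_{\ol 1}$ is an isomorphism. Here I expect to use that $c = u^2$ acts semisimply and invertibly on $\g/\n$ (since $\n = \ker(\operatorname{ad} c)$ by semisimplicity of $\operatorname{ad}(c)$ on $\g$, so $\operatorname{ad}(c)$ is invertible on the complement), together with the standard $\q$-module fact (\cref{q mod proj criteria}(3)) that on a $\q$-module where $Q^2$ acts invertibly, $Q$ itself is an isomorphism from odd to even part; one must check this descends correctly to the quotient by $\k$. The delicate point is that $\k$ need not be $\operatorname{ad}(u)$-stable in a way compatible with the grading — but $u\in\k$, so $\operatorname{ad}(u)(\k)\subseteq\k$, hence $\operatorname{ad}(u)$ does descend to $\g/(\n+\k)$, and since $\operatorname{ad}(c)=\operatorname{ad}(u)^2$ is invertible on $\g/\n$ it is invertible on the further quotient $\g/(\n+\k)$; \cref{q mod proj criteria}(3) then yields that $\operatorname{ad}(u)$ is an isomorphism on that space, exchanging parities. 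This completes the verification, and \cref{main_thm} finishes the proof.
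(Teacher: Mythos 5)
Your proposal is correct and takes essentially the same route as the paper: apply \cref{main_thm} with $Y=N_G(c)/(N_G(c)\cap K)$, with conditions (1),(2) immediate, and condition (3) coming from the fact that $c$ acts with nonzero eigenvalues on $\g/\operatorname{Lie}N_G(c)$, hence invertibly on the (co)normal fibers, so that $u$, whose square is (a multiple of) $c$, gives the required odd-to-even isomorphism. The only cosmetic point is that translating a vanishing point $gK\in Z(u)$ back to the base point replaces $u$ by $\Ad(g^{-1})u$, which again lies in $\k$ and squares to a nonzero multiple of $c$, so your base-point computation applies verbatim.
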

	
	\begin{proof}
	Let $Y=N_G(c)/(N_G(c)\cap K)$, which is a smooth, closed subvariety of $G/K$.  Then because $\operatorname{ad}(c)$ acts with non-zero eigenvalues on $\g/\operatorname{Lie}N_G(c)$, it will act by an automorphism on $\NN^\vee_Y|_{y}$ for any $y\in Y(\Bbbk)$, since all such $y$ have representatives in $N_G(c)$.  By our assumption, we have that $Z(u)\sub Y_0$, so $u$ will act by an automorphism on $\NN^\vee_Y|_{z}$ for any $z\in Z(u)$, and thus the hypotheses of Theorem \ref{main_thm} hold. 
	\end{proof}

	\begin{remark}
		We remind for future use that if $G_0$ is a reductive algebraic group (in particular it is connected) and $c$ is a semisimple element of its Lie algebra, then $C(c)$, the centralizer of $c$ in $G_0$, is connected.
	\end{remark}
	
	Before discussing symmetric spaces, we give an example in which condition (4) of Theorem \ref{main_thm} holds for any finite-dimensional equivariant vector bundle.	
	
	\begin{example}
		Consider $G=GL(n|n)$, $K=GL(1|1)\times\cdots\times GL(1|1)$.  Let $u\in\k_{\ol{1}}$ be generic, i.e. so that if $c:=u^2$ then $N_G(c)=K$.  Then we claim that $Z(u)=S_n\cdot K$, where $S_n\sub GL(n|n)_0\cong GL(n)\times GL(n)$ consists of the diagonally embedded permutation matrices.  Indeed, it is clear that this subgroup $S_n$ stabilizes $K$, so that $S_n\cdot K\sub Z(u)$. On the other hand if $g\in Z(u)$, then necessarily $\Ad(g)(c)\in\k_{\ol{0}}$, which implies, by regularity properties of $c$, that $g$ lies in the Weyl group $S_n\times S_n$ up to $K_0$.  It is straightforward to check that only the diagonally embedded copy of $S_n$ in $S_n\times S_n$ will keep $u$ inside $\k_{\ol{1}}$, so this proves our claim.
		
		Now, we see that the hypotheses of \cref{prop homog spaces} do not apply, since $N_{G_0}(c)=K_0$ in this case; however we can still compute $DS_u\Bbbk[G/K]$ as follows.
		
		Let $Y=Z(u)=S_n\cdot K$; i.e. $Y$ is a purely even variety consisting of the $n!$ points $\sigma K$ for $\sigma\in S_n$ (diagonally embedded in $GL(n)\times GL(n)\cong G_0$).  Thus for $\sigma\in S_n$, $\NN^\vee_{Y}|_{\sigma K}=T_{\sigma K}^*(G/K)$; as a $\k$-module, this cotangent space is isomorphic to $(\g/\k)^{\sigma}$, i.e. the $\sigma$ twist of the $\k$-module $\g/\k$.  Thus it is clear that $c$ acts by an automorphism on these fibers, and therefore $u$ does as well.  Therefore the hypotheses of \cref{main_thm} apply, giving:
		\[
		DS_u\Bbbk[G/K]=\prod\limits_{\sigma\in S_n}\Bbbk.
		\]
		Now let $V$ be a $K$-module, and write $\VV:=G\times_KV$, i.e. the $G$-equivariant vector bundle on $G/K$ determined by $V$.  We may write $\g=\k\oplus\m$ for a $\k$-stable subspace $\m$, and consider the open subscheme $W\sub\m$ defined, via the functor of points, as 
		\[
		W(R):=\{A\in\m(R):I_{n|n}+A\text{ is invertible in }\End(R^{n|n})\},
		\]
		where $R$ is a supercommutative algebra.  We see that $W$ is an open, $\Ad(K)$-stable subset of $\m$ containing $0$. We have the map
		\[
		\phi:W\to GL(n|n)/K, \ \ \ \ A\mapsto (I_{n|n}+A)K,
		\]
		where the formula is defined on $R$-points, for a supercommutative algebra $R$.  One may check that $\phi$ is $K$-equivariant, injective on closed points, and has that $d_0\phi$ is an isomorphism.  Thus we may choose a $K$-stable open subvariety $U\sub W\sub \m$, containing $0$, such that $\phi$ restricts to an open embedding $U\hookrightarrow GL(n|n)/K$.  Further, by Sumihiro's Theorem and the fact that $K_0$ is torus, we may assume that $U$ is affine.
		
		Observe now that if we write $\pi:G\to G/K$ for the canonical projection, then we obtain a $K$-equivariant isomorphism $\pi^{-1}(\phi(U))\cong K\times U$, where $K$ acts on the right on $\pi^{-1}(U)$ and $K$, and by the adjoint action on $U$.
		
		It follows that we have $\Gamma(U,\VV)\cong(\Bbbk[K]\otimes\Bbbk[U]\otimes V)^{K}$, which in particular contains the $K$-submodule $(\Bbbk[K]\otimes\Bbbk\langle 1\rangle\otimes V)^{K}\cong V$. This $K$-submodule of $\Gamma(U,\VV)$ defines a local, $K$-equivariant trivialization of $\VV$.  Since $G/K$ is a homogeneous space, we may translate the above trivialization to obtain similar local trivializations around each point $\sigma K$ for $\sigma\in S_n$. It follows that the hypotheses of \cref{main_thm} hold, and so we obtain that
		\[
		DS_u\Gamma(G/K,\VV)\cong \bigoplus\limits_{\sigma\in S_n}DS_uV^{\sigma^{-1}}.
		\]
	\end{example}

	Before moving to more general computations on supersymmetric spaces, we give a specific example which is of interest on its own for the elegance of its answer.
	
	\begin{example}[Supersphere]
		Let $\mathbb{S}^{m-1|2n}=OSp(m|2n)/OSp(m-1|2n)=G/K$ denote the supersphere. Explicitly, $G/K$ is the closed subvariety of $\Bbbk^{m|2n}$ cut out by the equation $q=1$, where $q$ is a nondegenerate, supersymmetric quadratic form preserved by $OSp(m|2n)$ on $\Bbbk^{m|2n}$.  In particular if $m=1$, $(G/K)_0=\mathbb{S}^0$ consists of two points.
		
		Note that if $m>1$ then we have $\mathbb{S}^{m-1|2n}=SOSp(m|2n)/SOSp(m-1|2n)$.  Let us assume that $m\geq 3$ and $n\geq 1$. 
		
		Choose a subgroup $S\cong SL(1|1)\sub K$ corresponding to an odd isotropic root of $K$ (with respect to some maximal torus), and let $u\in\operatorname{Lie}S\cong \s\l(1|1)_{\ol{1}}$ be such that $c=u^2\neq 0$.  Then there exists an odd isotropic root (with respect to some maximal torus) of $G$ with corresponding subgroup $S'\cong SL(1|1)$ such that $u$ is conjugate to an element in $\operatorname{Lie}S'$ in $G$ (note: $S$ arises from an isotropic root of $K$, not $G$, whereas $S'$ arises from an isotropic root of $G$, not $K$).  We show in \cref{sec_symm_spaces} that
		\[
		Z(u)\sub N_G(c)_0K,
		\]
		so we obtain from \cref{prop homog spaces} that
		\[
		DS_u\Bbbk[\mathbb{S}^{m-1|2n}]=DS_u\Bbbk[N_G(c)/(N_G(c)\cap K)].
		\]
		Now our copy of $SL(1|1)$ is normalized by $N_G(c)$, and so since it is contained in $K$ it will act trivially on $N_G(c)/N_G(c)\cap K$, meaning that in particular $u$ does too, so that
		\[
		DS_u\Bbbk[\mathbb{S}^{m-1|2n}]=\Bbbk[N_G(c)/(N_G(c)\cap K)].
		\] 
		For $g\in N_G(c)(\Bbbk)$, we have $\Ad(g)c=\pm c$; if $m>3$ then $K(\Bbbk)$ contains an element $g$ (coming from the Weyl group of $K$) such that $\Ad(g)c=-c$, and we have in this case that
		\[
		N_G(c)/(N_G(c)\cap K)=C_G(c)/(C_G(c)\cap K).
		\]
		Here $C(c)=SOSp(m-2|2n-2)\times GL(1|1)$ and $C(c)\cap K=SOSp(m-3|2n-2)\times GL(1|1)$, so $C_G(c)/(C_G(c)\cap K)=\mathbb{S}^{m-3|2n-2}$.  If $m=3$, then the even part of $N_G(c)/(N_G(c)\cap K)$ has two points, with one point being $C_G(c)/(C_G(c)\cap K)$.  From this we obtain that $N_G(c)/(N_G(c)\cap K)=OSp(1|2n-2)/Sp(2n)=\mathbb{S}^{m-3|2n-2}$. Thus in general we have the beautiful formula:
		\[
		DS_u\Bbbk[\mathbb{S}^{m-1|2n}]=\Bbbk[\mathbb{S}^{m-3|2n-2}].
		\]
	\end{example}

	\subsection{Supersymmetric spaces}\label{sec_symm_spaces} We apply Theorem \ref{main_thm} to several spaces of the form $G/K$ where $K$ is a symmetric subgroup of $G$.  By definition, a symmetric subgroup $K$ is one such that there exists an involution $\theta$ of $G$ with $(G^\theta)^\circ\sub K\sub G^\theta$, where $G^\theta$ is the fixed points of $\theta$, and $(G^\theta)^\circ$ is the connected component of the identity. 
	
	Note that the diagonally embedded subgroup $G\sub G\times G$ is a symmetric subgroup, and we have $G\cong (G\times G)/G$, so in fact our study of $\widetilde{G_u}$ in Section 5 is a special case of these examples.
	
	Let $G$ be one of $GL(m|n)$, $SOSp(m|2n)$, $P(n)$, or the simply connected supergroup of an exceptional basic simple Lie superalgebra.  Then we will consider symmetric subgroups $K$ of $G$ with an element $u\in\k_{\ol{1}}^{hom}$ which is generic of minimal rank, and satisfies the hypothesis of Proposition \ref{prop homog spaces}, namely that
	\[
	Z(u)\sub N_G(c)_0K. \ \ \ \ \ (*)
	\]
	 This condition will hold exactly in the cases for which $\k$ admits at most one factor subalgebra $\k'$ which has that $(\k'_{\ol{1}})^{hom}\neq0$.  One can easily go through the classification of supersymmetric pairs, found in \cite{Ser3}, to check which ones these are.
	 
	Let us explain how we compute in each case.  First, choose a maximal torus $\t'\sub\k_{\ol{0}}$ and extend it to a maximal torus $\t$ of $\g_{\ol{0}}$.  For the cases when $\k\neq\q(n)$, choose a root subalgebra $\s\l(1|1)\sub\k$, and let $u\in\s\l(1|1)_{\ol{1}}$ such that $c=u^2\neq0$.  First suppose that we have
	\[
	\operatorname{Lie}C_G(c)\cong \g\l(1|1)\times\g_u.
	\]
	The symmetric pairs which will satisfy this along with (*) are:
	\[
	(\g\l(m|n),\g\l(m|n-r)\times\g\l(r)), \ \ (\o\s\p(m|2n),\o\s\p(m|2n-2r)\times\s\p(2r)),
	\]
	\[
	(\o\s\p(m|2n),\o\s\p(m-1|2n-2r)\times\o\s\p(1|2r)), \ \ (\o\s\p(m|2n),\o\s\p(m-r|2n)\times\o(r)),
	\]
	\[
	(\o\s\p(2m|2n),\g\l(m|n)),  \ \ (\p(n),\g\l(r|n-r)),  
	\]
	\[
	(\mathfrak{d}(1|2;\alpha),\o\s\p(2|2)\times\s\o(2)), \ \ (\a\b(1|3),\g\o\s\p(2|4)), \ \ (\a\b(1,3),\s\l(1|4)),
	\]
	\[
	(\a\b(1|3),\mathfrak{d}(1|2,2)\times\s\l(2)), \ \ (\g(1|2),\mathfrak{d}(1|2;3)), \ \ (\g(1|2),\o\s\p(3|2)\times\s\l(2)).
	\]
	
	In all of the above cases, an element $c$ with this centralizer (up to isomorphism) is unique in $\t'$ up to: adding a central element in $\g$, acting by the Weyl group of $\t'$ in $K$, and scaling.  Further, either we have $N_G(c)=C_G(c)$ or $N_G(c)/C_G(c)\cong \mu_2$, and the action of $\mu_2$ on $c$ is $c\mapsto-c$.
	
	We now show that (*) holds for the pairs listed above.  Suppose that in one of the listed cases we have $u$ vanishes at $gK$; then in particular $c$ vanishes here, and we have $\Ad(g^{-1})(c)\in\k$.  Choose $k\in K_0$ such that $c'=\Ad(gk)^{-1}(c)\in\t'$.  Clearly we have $\operatorname{Lie}C(c')\cong\g\l(1|1)\times\g_u$.  Now to obtain $c$ from $c'$, the only case where we could add a central element in $\g$ is $\g=\g\l(m|n)$, and by rank considerations we see that this couldn't happen.  Thus there exists $w\in K_0$, corresponding to a lift of a Weyl group element, such that $w^{-1}c'$ is a scalar multiple of $c$. Therefore $gkw\in N_{G}(c)_0$, showing that the condition (*) indeed holds.  Therefore we obtain by \cref{prop homog spaces}	 for the above cases that
	\[
	DS_u\Bbbk[G/K]=DS_u\Bbbk[N_G(c)/(N_G(c)\cap K)].
	\]
	Further, in all of the above cases we have that our copy of $SL(1|1)$ lies in $K$ and is normalized by $N_G(c)$, so that $u$ will in fact act trivially on $N_G(c)/N_G(c)\cap K$; it follows that in the above cases we have:
	\[
	DS_u\Bbbk[G/K]=\Bbbk[N_G(c)/(N_G(c)\cap K)],
	\]
	and from this we obtain our explicit results given in the table below.
	
	We now consider three other cases:
	\[
	(\g\l(m|2n),\o\s\p(m|2n)), \ \ \ (\g\l(n|n),\p(n)), \ \ \ (\g\l(n|n),\q(n)).
	\]
	In the first two cases, we again choose a root subalgebra $\s\l(1|1)\sub\k$, and let $u\in\s\l(1|1)_{\ol{1}}$ with $c=u^2\neq0$	. Then we have that
	\[
	C_G(c)=GL(1|1)\times GL(1|1)\times G'.
	\]
	where $G'=GL(m-2|2n-2)$ for the first pair, and $G'=GL(n-2|n-2)$ for the second pair.   For $(\g\l(n|n),\q(n))$ we choose a copy of $\q(1)\sub\q(n)$ which is a factor subalgebra of a Cartan subalgebra, and let $u\in\q(1)_{\ol{1}}$ such that $c=u^2\neq0$.  Then we have that
	\[
	C_G(u)=GL(1|1)\times GL(n-1|n-1).
	\]
	For the above three cases, any element $c\in\t'$ with such a centralizer is unique up to action of the Weyl group of $K$ on $\t'$, scaling, and multiples of the center.  Using the same argument as above, we find that the vanishing set of $u$ is contained in $N_{G_0}(c)K$, and thus
	\[
	DS_u\Bbbk[G/K]=DS_u\Bbbk[N_G(c)/(N_G(c)\cap K)].
	\]
	Now in all the three above cases we in fact have $N_G(c)/(N_G(c)\cap K)=C_G(c)/(C_G(c)\cap K)$; we thus obtain the following:
	\begin{eqnarray*}
		DS_u\Bbbk[GL(m|2n)/OSp(m|2n)]& = &\Bbbk[GL(m-2|2n-2)/OSp(m-2|2n-2)]\\
		& \otimes & DS_u\Bbbk[GL(1|1)^2/GL(1|1)],
	\end{eqnarray*}
	\begin{eqnarray*}
		DS_u\Bbbk[GL(n|n)/P(n)]& = &\Bbbk[GL(n-2|n-2)/P(n-2)]\\ 
		& \otimes & DS_u\Bbbk[GL(1|1)^2/GL(1|1)],
	\end{eqnarray*}
	and
	\[
	DS_u\Bbbk[GL(n|n)/Q(n)]=\Bbbk[GL(n-1|n-1)/Q(n-1)]\otimes DS_u\Bbbk[GL(1|1)/Q(1)].
	\]
	Now one may compute explicitly that $DS_u\Bbbk[GL(1|1)^2/GL(1|1)]\cong\mathbb{A}^{0|1}$, where $\mathbb{A}^{0|1}=\Spec \Bbbk[\xi]$ for some odd variable $\xi$, and $DS_u\Bbbk[GL(1|1)/Q(1)]\cong\Bbbk[x]/(x^2=1)=\Bbbk[\mathbb{S}^{0}]$.

\newpage

		\renewcommand{\arraystretch}{2.1}	

	\begin{thm}We have the following table of computations for $DS_u\Bbbk[G/K]$:
		\label{thm table}
		\[
		\begin{array}{|c|c|} 
			\hline
			G/K & \Spec DS_u\Bbbk[G/K] \\
			\hline
			\makecell{GL(m|n)/GL(m|n-r) \\ \hspace{5em} \times GL(r)} & \makecell{GL(m-1|n-1)/GL(m-1|n-r-1)\\ \hspace{5em}\times GL(r)} \\
			\hline
			\makecell{SOSp(m|2n)/SOSp(m-r|2n)\\ \hspace{5em} \times SO(r) \\ m-r>2} & \makecell{SOSp(m-2|2n-2)/SOSp(m-2-r|2n-2)\\ \hspace{6em}\times SO(r)} \\
			\hline
			\makecell{SOSp(m|2n)/SOSp(2|2n) \\ \hspace{5em} \times SO(m-2)} & \left(\makecell{SOSp(m-2|2n-2)/SO(m-2)\\ \hspace{5em} \times Sp(2n-2)}\right)\times\mathbb{S}^{0}\\
			\hline
			\makecell{SOSp(m|2n)/SOSp(m|2n-2r) \\ \times Sp(2r)} & \makecell{SOSp(m-2|2n-2)/SOSp(m-2|2n-2-2r)\\ \hspace{5em}\times Sp(2r)}\\
			\hline
			\makecell{SOSp(m|2n)/SOSp(m-1|2n-2r) \\ \times SOSp(1|2r) \\ m\geq 4, r\leq n-1} & \makecell{SOSp(m-2|2n-2)/SOSp(m-3|2n-2-2r)\\ \hspace{5em}\times SOSp(1|2r)}\\
			\hline
				\makecell{SOSp(3|2n)/SOSp(2|2n-2r) \\ \times SOSp(1|2r) \\ r\leq n-1} & \makecell{SOSp(1|2n-2)/Sp(2n-2-2r)\\ \hspace{5em}\times SOSp(1|2r)}\times\mathbb{S}^{0}\\
			\hline
			GL(m|2n)/OSp(m|2n) & GL(m-2|2n-2)/OSp(m-2|2n-2)\times \mathbb{A}^{0|1}\\
			\hline
			SOSp(2m|2n)/GL(m|n) & \left(SOSp(2m-2|2n-2)/GL(m-1|n-1)\right)\times\mathbb{S}^{0} \\
			\hline
			GL(n|n)/Q(n) &  \large \left(GL(n-1|n-1)/Q(n-1)\right)\times\mathbb{S}^{0}\\
			\hline 
			GL(n|n)/P(n) &  \left(GL(n-2|n-2)/P(n-2)\right)\times\mathbb{A}^{0|1}\\
			\hline 
			P(n)/GL(r|n-r), 0<r<n & P(n-2)/GL(r-1|n-r-1)\\
			\hline
			D(2,1;\alpha)/SOSp(2|2)\times SO(2) & \mathbb{S}^0 \\
			\hline
			G(1|2)/[D(2,1;3)/z] & SL(2)/\mathbb{G}_m\\
			\hline
			G(1|2)/\left[(SpinSp(3|2)\times SL(2))/\langle(-1,-1)\rangle\right]  & \Spec\Bbbk \\
			\hline
			AB(3,1)/\left[\G_m\times (SpinSp(2|4)/(\pm1))\right] & \left(SL(3)/GL(2)\right)\times\mathbb{S}^0\\
			\hline
			AB(3,1)/SL(1|4) & \mathbb{S}^{0} \\
			\hline
			 AB(3,1)/\left[(D(2,1;2)\times SL(2))/\langle z'\rangle\right] & SL(3)/GL(2) \\
			\hline
		\end{array}
		\]	
	\end{thm}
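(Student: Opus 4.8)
The plan is to deduce every entry of the table from \cref{prop homog spaces}, applied row by row along the classification of supersymmetric pairs in \cite{Ser3}. In each case the argument has the same shape. First I would fix a maximal torus $\t'$ of $\k_{\ol 0}$, extend it to a maximal torus $\t$ of $\g_{\ol 0}$, select the root subalgebra $\s\l(1|1)\subseteq\k$ (or the factor $\q(1)\subseteq\k$ in the cases involving $\q(n)$), and choose $u$ in it with $c=u^2\ne 0$ of the prescribed minimal rank. Second, I would verify the hypothesis $(*)$ of \cref{prop homog spaces}, namely $Z(u)\subseteq N_{G_0}(c)K$: if $gK\in Z(u)$ then $\Ad(g^{-1})(c)\in\k$, so after a translation by $K_0$ we may assume it equals some $c'\in\t'$ with $\operatorname{Lie}C_G(c')$ of the same type as $\operatorname{Lie}C_G(c)$; since an element of $\t'$ with a prescribed centralizer type is unique up to the Weyl group of $K$, scaling, and a central shift in $\g$ (the last being excluded in the $\g\l$-cases by a rank count), a further $K_0$-translate carries $c'$ to a scalar multiple of $c$, placing $g$ in $N_{G_0}(c)K$. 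This is precisely the verification already sketched in \cref{sec_symm_spaces}, and it is also where the side conditions appearing in the table (such as $m-r>2$, $m\ge 4$, $r\le n-1$) and the restriction to pairs with at most one factor $\k'$ satisfying $(\k'_{\ol 1})^{hom}\ne 0$ are forced.

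Granting $(*)$, \cref{prop homog spaces} gives $DS_u\Bbbk[G/K]=DS_u\Bbbk[N_G(c)/(N_G(c)\cap K)]$. In every row except the three $GL$-quotients $GL(m|2n)/OSp(m|2n)$, $GL(n|n)/P(n)$, $GL(n|n)/Q(n)$, the chosen $SL(1|1)$ (or $Q(1)$) lies inside $K$ and is normalized by $N_G(c)$; hence it acts trivially on the homogeneous space $N_G(c)/(N_G(c)\cap K)$, so $u$ does too, and $DS_u\Bbbk[G/K]=\Bbbk[N_G(c)/(N_G(c)\cap K)]$. The remaining work is the concrete identification, in each row, of $C_G(c)$ from the root datum, of whether $N_G(c)$ equals $C_G(c)$ or is an extension of it by $\mu_2$ acting via $c\mapsto -c$, and then of the quotient $N_G(c)/(N_G(c)\cap K)$ as a smaller symmetric space of the same family; the $\mathbb S^0$ factors in the table are exactly the contribution of a $\mu_2$ that is \emph{not} realized inside $K$. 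Throughout I would use the standing remark that the centralizer of a semisimple element in a connected reductive group is connected to keep track of connected components of the even parts.

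For the three exceptional rows the same steps reduce us to $N_G(c)/(N_G(c)\cap K)=C_G(c)/(C_G(c)\cap K)$, which factors as a product of a strictly smaller symmetric space of the same type with $GL(1|1)^2/GL(1|1)$ in the first two cases and with $GL(1|1)/Q(1)$ in the third. On the small factor $u$ does \emph{not} act trivially, so I would finish by the direct computations $DS_u\Bbbk[GL(1|1)^2/GL(1|1)]\cong\mathbb A^{0|1}$ and $DS_u\Bbbk[GL(1|1)/Q(1)]\cong\Bbbk[x]/(x^2-1)=\Bbbk[\mathbb S^0]$, each of which is routine once one writes coordinates on these $(2|2)$- and $(1|1)$-dimensional spaces together with the explicit matrix form of $u$.

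The main obstacle I anticipate is the uniform verification of $(*)$. It hinges on the somewhat delicate fact that an element of $\t'$ with a given centralizer type is essentially unique up to Weyl group, scaling, and central shift, and on ruling out the central shift in the $\g\l$-cases by a rank argument. For the classical series this is straightforward linear algebra, but for $D(2,1;\alpha)$, $G(1|2)$, and $AB(1,3)$ it requires care with the root systems, and it is ultimately what pins down the precise genericity hypotheses and side conditions appearing in the statement.
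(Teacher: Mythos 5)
Your proposal follows essentially the same route as the paper: verify condition $(*)$ via the uniqueness of $c$ in $\t'$ up to the Weyl group of $K$, scaling, and a central shift (excluded in the $\g\l$ cases by rank), apply \cref{prop homog spaces}, use that the chosen $SL(1|1)$ (or $Q(1)$) lies in $K$ and is normalized by $N_G(c)$ so that $u$ acts trivially on $N_G(c)/(N_G(c)\cap K)$ in the generic rows, and finish the three $GL$-quotient rows through $C_G(c)/(C_G(c)\cap K)$ and the explicit computations $DS_u\Bbbk[GL(1|1)^2/GL(1|1)]\cong\mathbb{A}^{0|1}$, $DS_u\Bbbk[GL(1|1)/Q(1)]\cong\Bbbk[\mathbb{S}^0]$. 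The only slight imprecision is attributing every $\mathbb{S}^0$ factor to a $\mu_2\subset N_G(c)$ not met by $K$ --- in the $GL(n|n)/Q(n)$ row it comes instead from $DS_u\Bbbk[GL(1|1)/Q(1)]$, which you compute anyway --- so this does not affect the argument.
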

	
	\renewcommand{\arraystretch}{1}
	
	As a matter of explanation for the elements $z,z'$, we note that the center of $D(2,1;\alpha)$ is a Klein 4 group, where the three nontrivial elements are naturally indexed by $1,\alpha$, and $1-\alpha$. 	In the row where we have $G(1|2)/[D(2,1;3)/z]$, the $z$ is the central element of $D(2,1;3)$ corresponding to $-4$.  In the last row of the above table, $z'$ is the element of $D(2,1;2)\times SL(2)$ given by $(x,-1)$, where $-1\in SL_2$, and $x\in D(2,1;2)(\C)$ is the central element corresponding to $-3$.  
	
	\begin{remark}
		For a smooth affine $G$-supervariety $X$, write $D^{G}(X)$ for the algebra of $G$-equivariant differential operators on $X$.  Then if $u\in\g_{\ol{1}}^{hom}$ satisfies the hypothesis of Corollary \ref{cor_z=y}, $Y$ will have a natural action by $\widetilde{G_u}$, and we will obtain a natural algebra morphism:
		\[
		D^G(X)\to D^{\widetilde{G_u}}(Y).
		\] 
		This map would be especially interesting to study in one of the cases above, when $G/K$ is a supersymmetric space.
	\end{remark}

	\begin{remark}
		To see why the condition (*) is important, consider the supersymmetric space $G/K=GL(3|3)/(GL(1|1)\times GL(2|2))$, and let $u\in\g\l(1|1)\sub\k$ be a generic rank one element lying in the $\g\l(1|1)_{\ol{1}}$ factor of $\k$.  Then there is an element $g$ of $G(\Bbbk)$, coming from the Weyl group, which has that $\Ad(g^{-1})(u)$ lies in the $\g\l(2|2)$ factor of $\k$.  Thus $gK\in Z(u)$; however one can check that $u$ does not satisfy condition (3) of Theorem \ref{main_thm} at $g$.  
	\end{remark}

	\subsection{Quotients by Levi subgroups of \texorpdfstring{$GL(n|n)$}{GL(n|n)}, \texorpdfstring{$P(n)$}{P(n)}, and \texorpdfstring{$Q(n)$}{Q(n)}}
	
	\begin{thm}\label{theorem gl homog space}
		Let $u=\begin{bmatrix} 	0 & I_n\\ \lambda I_n & 0	\end{bmatrix}\in\g\l(n|n)$ where $\lambda\in\Bbbk$, and consider the subgroup
		\[
		K=GL(r_1|r_1)\times\cdots\times GL(r_k|r_k),
		\]
		where $\sum\limits_i r_i=n$.  Then we have an isomorphism
		\begin{eqnarray*}
			DS_u[GL(n|n)/GL(r_1|r_1)\times\cdots\times GL(r_k|r_k)]& \cong & H_{dR}^\bullet(GL(n)/GL(r_1)\times\cdots\times GL(r_k)),
		\end{eqnarray*}
	where $H_{dR}^\bullet(-)$ denotes the de Rham cohomology.
	\end{thm}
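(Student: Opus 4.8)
\subsection*{Proof sketch}

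The plan is to localize $DS_u\Bbbk[GL(n|n)/K]$ onto an explicit closed subvariety which, by \cref{lemma odd cotang}, is the odd tangent bundle over $GL(n)/L$, where $L:=GL(r_1)\times\cdots\times GL(r_k)$. Write $\g=\g\l(n|n)$ with block decomposition $\g_{\ol 0}=\g\l(n)\oplus\g\l(n)$ and $\g_{\ol 1}=\g_1\oplus\g_{-1}$, where $\g_{\pm 1}$ are the upper-right and lower-left blocks, and set
\[
\h:=\left\{\left[\begin{smallmatrix}A&0\\ C&A\end{smallmatrix}\right]:A,C\in M_n\right\},
\]
a closed sub-superalgebra with $\h_{\ol 0}$ the diagonal copy of $\g\l(n)$ and $\h_{\ol 1}=\g_{-1}$; let $H\subseteq GL(n|n)$ be the corresponding closed subgroup, so $H_0$ is the diagonal $GL(n)$ and $H\cong GL(n)\ltimes\Pi\g\l(n)$. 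Put $Y:=H\cdot K/K\cong H/(H\cap K)$, a smooth closed subvariety of $GL(n|n)/K$ with $Y_0\cong GL(n)/L$ embedded diagonally in $(GL(n|n)/K)_0=(GL(n)/L)^2$. A direct $\Ad$-computation shows that $u$ vanishes on $GL(n|n)/K$ precisely along this diagonal, i.e. $Z(u)=Y_0$; note also that $u\in\k$, so $u^2=\lambda I_{n|n}$ acts by $0$ and $DS_u$ is defined.

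Next I would verify hypotheses (1)--(3) of \cref{main_thm} for $X=GL(n|n)/K$ and this $Y$ (hypothesis (4) is automatic since we take $\VV=\OO_X$). Hypothesis (1) is $Z(u)=Y_0$. For (2): since $u\in\k$ the field vanishes at $\bar eK$, and for a general point $h\in H$ with odd part $\theta\in\g_{-1}$ one computes $\Ad(h^{-1})u=u-[\theta,u]\in u+\h_{\ol 0}\subseteq\h+\k$ (the higher brackets all vanish), so $u$ is tangent to $Y$ and $u(\II_Y)\subseteq\II_Y$. For (3): by $H_0$-homogeneity it suffices to work at $\bar eK$, where the conormal fiber is $(\g/(\h+\k))^\vee$; identifying both $\g_{\ol 0}/(\h+\k)_{\ol 0}$ and $\g_{\ol 1}/(\h+\k)_{\ol 1}$ with $M_n/\{\text{block-diagonal matrices}\}$, one checks that $\operatorname{ad}(u)$ induces between them the map $P\mapsto -P$, an isomorphism independent of $\lambda$ --- this is the point of the choice of $\h$. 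Thus \cref{main_thm} gives $DS_u\Bbbk[GL(n|n)/K]=DS_u\Bbbk[Y]$.

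Finally I would apply \cref{lemma odd cotang} to $(Y,u|_Y)$. The field $u$ is everywhere vanishing on $Y$ since $Y_0=Z(u)$, and (again reducing to $\bar eK$ by $H_0$-homogeneity) $\operatorname{ad}(u)$ on $T_{\bar eK}Y=\h/(\h\cap\k)$ annihilates the even part $\h_{\ol 0}/(\h_{\ol 0}\cap\k)$, because $\operatorname{ad}(u)$ kills the diagonal $\g\l(n)$, and carries the odd part $\g_{-1}/(\g_{-1}\cap\k)$ isomorphically onto the even part (it is $C\mapsto C$ under the identifications of both sides with $M_n/\{\text{block-diagonal}\}$); dualizing yields conditions (1)--(2) of \cref{lemma odd cotang}. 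Hence $Y\cong\Pi\TT_{GL(n)/L}$ with $u$ corresponding to the de Rham differential $d$, so that $\Bbbk[Y]\cong\Omega^\bullet(GL(n)/L)$ and therefore
\[
DS_u\Bbbk[GL(n|n)/K]=DS_u\Bbbk[Y]=DS_d\,\Omega^\bullet(GL(n)/L)=H_{dR}^\bullet(GL(n)/L),
\]
as claimed. I expect the main obstacle to be choosing $\h$ so that the two linear-algebra verifications (hypothesis (3) of \cref{main_thm} and the isomorphism needed for \cref{lemma odd cotang}) hold uniformly in $\lambda$; a secondary technical point is confirming $Y=HK/K$ is a closed sub-supervariety, which follows because $H_0K_0=\{(x,y):x^{-1}y\in L\}$ is closed in $GL(n)^2$.
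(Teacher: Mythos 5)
Your proposal is correct and follows essentially the same route as the paper: the same subgroup $H$ (diagonal $GL(n)$ with odd part $\g_{-1}$), the same identification $Z(u)=Y_0$ with $Y=H/(H\cap K)$, then \cref{main_thm} to reduce to $Y$ and \cref{lemma odd cotang} to identify $\Bbbk[Y]$ with $\Omega^\bullet(GL(n)/L)$ and $u$ with the de Rham differential. In fact you verify hypotheses (2) and (3) of \cref{main_thm} (tangency of $u$ to $Y$ and the conormal-fiber isomorphism, uniformly in $\lambda$) more explicitly than the paper does, and your linear-algebra checks are the correct ones.
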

	
	\begin{proof}
		Let $H\sub GL(n|n)$ be the subgroup with $\h=\g_{-1}\oplus\g_{0}$ and $H_0\cong GL(n)$ embedded in $GL(n|n)$ as the matrices of the form
		\[
		\begin{bmatrix}
			A & 0\\0 & A
		\end{bmatrix}
		\]
		for $A\in GL(n)$.  Let $Y=H/(K\cap H)$; then we claim that the vanishing set of $u$ is exactly $Y_0$.  Indeed, suppose that $u$ vanishes at $g^{-1}K$, i.e. $\Ad(g)(u)\in\k$, and write $g=(g_1,g_2)\in GL(n)\times GL(n)$.  Then we can write $g=(k,I_n)(g_2,g_2)$ where $k=g_1g_2^{-1}$, and now $(g_2,g_2)\in H_0$.    Observe that
		\[
		\Ad(k,I_n)(u)=\begin{bmatrix}
			0 & k\\ \lambda k^{-1} & 0
		\end{bmatrix}.
		\]
		Thus we must have $k\in GL(r_1)\times\cdots\times GL(r_k)$ so that $(k,I_n)\in K$.  Therefore $g^{-1}K=h^{-1}K$, where $h=(g_2,g_2)$.  Now from the \cref{main_thm}, we obtain that
		\[
		DS_u\Bbbk[GL(n|n)/K]=DS_u\Bbbk[Y].
		\]
		We finish by observing that $u$ defines an everywhere vanishing vector field on $Y$, and satisfies the hypotheses of \ref{lemma odd cotang}.  Since $Y_0=GL(n)/GL(r_1)\times\cdots\times GL(r_k)$, this completes the proof.
	\end{proof}
	
	\begin{thm}
		Let $G=P(n)$, $K=P(r_1)\times\cdots\times P(r_k)$, where $\sum\limits_ir_i=n$.  Let 
		\[
		u=\begin{bmatrix}
			0 & I_n\\0 & 0
		\end{bmatrix}\in\p(n)_{\ol{1}}; 
		\]
		if $n$ is even, then further set
		\[
		v=\begin{bmatrix}0 & 0\\ A & 0	\end{bmatrix}\in\p(n)_{\ol{1}},
		\]
		where $A$ is an invertible skew-symmetric matrix.  Then
		\[
		DS_u\Bbbk[G/K]\cong H^\bullet_{dR}(O(n)/O(r_1)\times\cdots\times O(r_k)),
		\]
		and when $n,r_1,\dots,r_k$ are all even,
		\[
		DS_v\Bbbk[G/K]\cong H^\bullet_{dR}(Sp(n)/Sp(r_1)\times\cdots\times Sp(r_k)).
		\]
	\end{thm}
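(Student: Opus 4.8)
The plan is to mimic the proof of \cref{theorem gl homog space} in the $P(n)$ setting. Fix the $\Z$-grading $\p(n)=\g_{-1}\oplus\g_0\oplus\g_1$ in which $\g_0=\g\l(n)$ sits block-diagonally, $\g_1$ is the space of symmetric matrices in the upper-right block, and $\g_{-1}$ the space of skew-symmetric matrices in the lower-left block; then $u\in\g_1$, $v\in\g_{-1}$, and $u^2=v^2=0$ since $\g_{\pm2}=0$. I will take the symmetric form defining $O(n)$ (and likewise $A$) to be block-diagonal for the partition $(r_1,\dots,r_k)$, so that inside $GL(n)=P(n)_0$ one has $O(n)\cap K_0=O(r_1)\times\cdots\times O(r_k)$ and, when $n$ and all $r_i$ are even, $Sp(n)\cap K_0=Sp(r_1)\times\cdots\times Sp(r_k)$.

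Consider the statement about $u$. Let $H\subseteq P(n)$ be the closed subgroup with $H_0=O(n)$ (the $GL(n)$-stabilizer of $u$) and Lie superalgebra $\h=\o(n)\oplus\g_{-1}$; this is a sub-superalgebra since $\o(n)\subseteq\g_0$ preserves $\g_{-1}$ and $[\g_{-1},\g_{-1}]=0$. Put $Y=H/(H\cap K)\hookrightarrow G/K$, so $Y_0=O(n)/(O(r_1)\times\cdots\times O(r_k))$. First I would compute $Z(u)$: by the vanishing criterion for left-translation vector fields on $G/K$, $u$ vanishes at $gK$ iff $\Ad(g^{-1})u\in\k$; writing $g\in GL(n)$ and using $\g_1\cong S^2(\Bbbk^n)$, the element $\Ad(g^{-1})u$ corresponds to $(g^{t}g)^{-1}$, which lies in $\k_1$ (the block-diagonal symmetric matrices) exactly when $g^{t}g$ is block-diagonal, i.e.\ (since every invertible symmetric matrix over $\Bbbk$ is of the form $h^{t}h$) exactly when $g\in O(n)\cdot K_0$; hence $Z(u)=Y_0$. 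Next I would verify hypotheses (1)--(3) of \cref{main_thm} with $\VV=\OO_{G/K}$, so that (4) is automatic: (1) is the previous sentence; (2), that $u(\II_Y)\subseteq\II_Y$, follows by checking $\Ad(h^{-1})u\in\h+\k$ for every $h\in H$, using that $u$ is $\Ad(H_0)$-fixed and that $[\eta,u]\in\o(n)$ for $\eta\in\g_{-1}$; and (3) reduces, via the left-translation action of $H_0=O(n)$ (which fixes $u$ and preserves $Y$), to the coset $eK$, where $\NN^\vee_Y|_{eK}$ is dual to $\g/(\h+\k)$ --- with even part $\g\l(n)/(\o(n)+\k_0)$, odd part $\g_1/\k_1$, and vanishing degree $-1$ part --- and where $Q$ acts as (the transpose of) $\operatorname{ad}(u)$, which restricts to an isomorphism $\g\l(n)/(\o(n)+\k_0)\xrightarrow{\sim}\g_1/\k_1$, $A\mapsto-(A+A^{t})$ (surjective with kernel exactly $\o(n)+\k_0$). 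Thus \cref{main_thm} gives $DS_u\Bbbk[G/K]=DS_u\Bbbk[Y]$.

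To finish, on $Y$ the vector field $u$ restricts to an odd vector field vanishing at every closed point (its zero locus contains $Z(u)=Y_0$), and at $e(H\cap K)$ its linearization on $T_{e}Y=\h/(\h\cap\k)$ is $\operatorname{ad}(u)$, which annihilates the even part $\o(n)/(\o(n)\cap\k_0)$ and, under the identifications $\g_{-1}\cong\Lambda^2(\Bbbk^n)\cong\o(n)$, is the identity map from the odd part $\g_{-1}/(\g_{-1}\cap\k_{-1})$ onto the even part. Dualizing, this is precisely hypotheses (1)--(2) of \cref{lemma odd cotang} at each point of $Y_0$ (again by $H_0$-equivariance), so $Y\cong\Pi\TT_{Y_0}$ with $u$ the de Rham differential and hence $DS_u\Bbbk[Y]=H^\bullet_{dR}(Y_0)=H^\bullet_{dR}(O(n)/O(r_1)\times\cdots\times O(r_k))$. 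The argument for $v$ is word-for-word the same after swapping $\g_1$ and $\g_{-1}$: the $GL(n)$-stabilizer of $v$ is $Sp(n)$, one takes $\h=\mathfrak{sp}(n)\oplus\g_1$ and gets $Y_0=Sp(n)/(Sp(r_1)\times\cdots\times Sp(r_k))$, the relevant isomorphisms are induced by $\operatorname{ad}(v)\colon B\mapsto BA$ (where invertibility and block-compatibility of $A$, hence the evenness of $n$ and the $r_i$, are what make them isomorphisms), and \cref{lemma odd cotang} yields $DS_v\Bbbk[G/K]=H^\bullet_{dR}(Sp(n)/Sp(r_1)\times\cdots\times Sp(r_k))$.

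I expect the only genuinely delicate point to be group-theoretic bookkeeping rather than anything conceptual: confirming that $Y=H/(H\cap K)$ is a closed smooth subvariety of $G/K$ with $\operatorname{Lie}(H\cap K)=\h\cap\k$, that the relevant vector field is tangent to it (hypothesis (2) of \cref{main_thm}), and the two reductions to the coset $eK$ by equivariance --- all of which parallel what is already carried out for $GL(n|n)$ in \cref{theorem gl homog space}.
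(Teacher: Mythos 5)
Your proposal is correct and follows essentially the same route as the paper's (much terser) proof: the same subgroups $H$ with $H_0=O(n)$, $\h_{\ol{1}}=\g_{-1}$ (resp.\ $H_0'=Sp(\Bbbk^n,A)$, $\h'_{\ol{1}}=\g_1$), the identification of $Z(u)$ (resp.\ $Z(v)$) with $Y_0$ via the equivalence of nondegenerate symmetric (resp.\ alternating) forms, reduction to $Y$ by \cref{main_thm}, and then \cref{lemma odd cotang} using that $[u,-]:\h_{\ol{1}}\to\h_{\ol{0}}$ (resp.\ $[v,-]$) is an isomorphism. The extra verifications you carry out (tangency of the vector field to $Y$, the conormal condition at $eK$, the explicit maps $A\mapsto-(A+A^t)$ and $B\mapsto BA$) are exactly the details the paper leaves implicit, and they check out.
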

	
	\begin{proof}
		In the first case, consider the subgroup $H\sub G$ with $H_0=O(n)\sub GL(n)$, and $\h_{\ol{1}}=\g_{-1}$.  Set $Y=H/H\cap K\sub G/K$.  In the second case, consider the subgroup $H'\sub G$ with $H'_0=Sp(\Bbbk^n,A)\sub GL(n)$, the matrices preserving $A$, and $\h_{\ol{1}}=\g_{1}$.  Then set $Y=H/H\cap K\sub G/K$. Now using the equivalence of all symmetric, resp. alternating nondegenerate bilinear forms on $\Bbbk^{n}$, we can prove that the vanishing set of $u$, resp. $v$ in $G/K$ is given by $Y_0$.  From here, we use that $[u,-]:\h_{\ol{1}}\to\h_{\ol{0}}$, resp. $[v,-]:\h'_{\ol{1}}\to\h'_{\ol{0}}$ are isomorphisms, and \cref{lemma odd cotang}.
	\end{proof}

	\begin{thm}
		Consider the element in $\q(n)$ given by:
		\[
		u=\begin{bmatrix}
			0 & I_n\\ I_n & 0
		\end{bmatrix}.
		\]
		Let $r_1,\dots,r_k$ be positive integers with $\sum\limits r_i=n$.  Then we have an isomorphism of algebras:
		\[
  		DS_u\Bbbk[Q(n)/Q(r_1)\times\dots\times Q(r_k)]\cong H_{dR}^\bullet(GL(n)/GL(r_1)\times\dots\times GL(r_k)).
		\]
	\end{thm}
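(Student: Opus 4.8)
The plan, which runs parallel to \cref{prop special Q case} rather than to the two preceding theorems, is to show that $u$ defines an everywhere vanishing odd vector field on $X := Q(n)/K$, where $K = Q(r_1)\times\cdots\times Q(r_k)$ is the block Levi subgroup, and that it satisfies the hypotheses of \cref{lemma odd cotang}; no reduction via \cref{main_thm} will be needed. Granting this, \cref{lemma odd cotang} gives $X \cong \Pi\TT_{X_0}$ with $u$ corresponding to the de Rham differential, whence $DS_u\Bbbk[X]$ is the cohomology of $d$ on $\Omega^\bullet_{X_0}$, i.e. $H^\bullet_{dR}(X_0)$ as an algebra. Since $X_0 = GL(n)/(GL(r_1)\times\cdots\times GL(r_k))$ — an affine variety, the block Levi being reductive (so that $Q(n)/K$ is affine by \cite{MaT} and \cref{lemma odd cotang} applies) — this is exactly the asserted formula.

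The whole argument rests on two elementary computations in $\q(n)$. Writing $\q(n)_{\ol{0}} = \{\operatorname{diag}(A,A)\}$ and $\q(n)_{\ol{1}} = \{\operatorname{antidiag}(B,B)\}$, one has $[u,\operatorname{diag}(A,A)] = 0$ for every $A$, and $[u,\operatorname{antidiag}(B,B)] = 2\operatorname{diag}(B,B)$. The first says that $u$ is fixed by $\Ad(Q(n)_0)$ and that $[u,\q(n)_{\ol{0}}] = 0$; the second says that $[u,-]$ restricts to an isomorphism $\q(n)_{\ol{1}} \to \q(n)_{\ol{0}}$, and likewise, block by block, to an isomorphism $\k_{\ol{1}} \to \k_{\ol{0}}$ — here using that $u \in \k$, being the sum of the analogous elements of the factors $\q(r_i)$. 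Finally $c := u^2 = I_{2n}$ is the central identity and lies in $\k$, so $c$ acts as the zero vector field on $X$ (left translation by a central element of $\k$ is trivial on $G/K$); in particular $c$ is semisimple on $\Bbbk[X]$, $X$ is a $\q$-supervariety, and $DS_u\Bbbk[X]$ is simply the $u$-cohomology of $\Bbbk[X]$.

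Next I would verify the hypotheses of \cref{lemma odd cotang}. Every closed point of $X$ has the form $gK$ with $g \in Q(n)_0(\Bbbk) = GL(n,\Bbbk)$, and by the vanishing criterion for homogeneous spaces recalled above, $u$ vanishes at $gK$ iff $\Ad(g^{-1})(u) \in \k$; this holds because $\Ad(g^{-1})(u) = u \in \k$, so $Z(u) = X_0$ and $u$ is everywhere vanishing. Because $u$ is right-invariant and $\Ad(Q(n)_0)$-fixed, and $Q(n)_0$ acts transitively on $X(\Bbbk)$, it suffices to check conditions (1) and (2) at the base point $eK$. There $u$ acts on the cotangent space $T^*_{eK}X = (\g/\k)^*$ as the transpose of the operator $\overline{w} \mapsto \overline{[u,w]}$ on $T_{eK}X = \g/\k$. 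By the two computations above this operator annihilates $(\g/\k)_{\ol{0}}$ and descends to an isomorphism $(\g/\k)_{\ol{1}} \xrightarrow{\sim} (\g/\k)_{\ol{0}}$; transposing, $u(T^*_{eK}X)_{\ol{1}} = 0$ and $u\colon (T^*_{eK}X)_{\ol{0}} \to (T^*_{eK}X)_{\ol{1}}$ is an isomorphism, which are precisely conditions (1) and (2).

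With this in hand \cref{lemma odd cotang} applies and the theorem follows. I do not foresee a genuine obstacle: the essential content is the pair of bracket identities in $\q(n)$ together with the observation that — in contrast to the $GL(n|n)$ and $P(n)$ cases, where one first cuts down to a subgroup using \cref{main_thm} — here $u$ already vanishes on all of $X$, so one may invoke \cref{lemma odd cotang} directly. The only mild care needed is in identifying the underlying variety $X_0$ with the affine homogeneous space $GL(n)/(GL(r_1)\times\cdots\times GL(r_k))$.
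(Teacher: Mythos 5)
Your proposal is correct and follows essentially the same route as the paper: one checks directly that $u$ is $\Ad(Q(n)_0)$-fixed (hence everywhere vanishing on $Q(n)/K$), that $u^2$ acts trivially, and that $[u,-]$ induces an isomorphism $(\q(n)/\k)_{\ol{1}}\to(\q(n)/\k)_{\ol{0}}$, so \cref{lemma odd cotang} applies and identifies the answer with $H^\bullet_{dR}(GL(n)/GL(r_1)\times\cdots\times GL(r_k))$. The paper's proof is just a terser version of the same verification, with no appeal to \cref{main_thm}.
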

	
	\begin{proof}
		We check the hypotheses of \cref{lemma odd cotang}; clearly $u^2$ acts trivially, and it commutes with $Q(n)_0$.  Since it normalizes $\k=\q(r_1)\times\dots\times\q(r_k)$, and defines an isomorphism $\q(n)_{\ol{1}}\to\q(n)_{\ol{0}}$, the map
		\[
		(\q(n)/\k)_{\ol{1}}\to(\q(n)/\k)_{\ol{0}}
		\]
		is an isomorphism. 
	\end{proof}

\section{Appendix: A root subgroup of \texorpdfstring{$D(2,1;\alpha)$}{D(2,1;a)}}

Consider the subgroup $H$ of $D(2,1;\alpha)$ with Lie superalgebra given by $\g_{\beta}\oplus\g_{-\beta}\oplus\mathfrak{t}$, where $\mathfrak{t}$ is a maximal torus and $\beta$ is an isotropic odd root.  Its Hopf superalgebra structure is given as follows: it has functions $\Bbbk[x_{1}^{\pm1},x_2^{\pm1},x_3^{\pm1},\xi,\eta]$, where the coproduct $\Delta$ is given by:
\[
\Delta(x_i)=x_i\otimes x_i+c_ix_i\eta\otimes x_i\xi, \ \ \ \Delta(\xi)=\xi\otimes 1+x_1x_2x_3\otimes \xi, \ \ \ \Delta(\eta)=\eta\otimes x_1x_2x_3+1\otimes \eta.
\]
where $c_1+c_2+c_3=0$, and explicitly we have $c_1=\alpha$, $c_2=1$, $c_3=-1-\alpha$.

Let $x\in\g_{\beta}$ correspond to the derivation $\d_{\xi}$ at the identity, and $y\in\g_{-\beta}$ correspond to the derivation $\d_{\eta}$ at the identity.  Then we obtain, by direct computation, that the adjoint vector field of $u=x+y$ is given by:
\[
(1-x_1x_2x_3)(\d_{\eta}-\d_{\xi})-(\xi+\eta)\sum\limits_ic_ix_i\d_{x_i}.
\]
We can compute its cohomology on $\Bbbk[H]$ as follows: set $\zeta=\xi+\eta$ and $\gamma=(\eta-\xi)/2$.  Then we may rewrite the above as
\[
D_1+D_2=(1-x_1x_2x_3)\d_{\gamma}-\zeta\sum\limits_ic_ix_i\d_{x_i}.
\]
where 
\[
D_1:=(1-x_1x_2x_3)\d_{\gamma}, \ \ \ \ D_2:=-\zeta\sum\limits_ic_ix_i\d_{x_i}.
\]
It is a straightforward computation that $[D_1,D_2]=0,$ so we can view this as computing the cohomology of the total complex of the following bicomplex:
\[
\xymatrix{
A_0 \ar[r]^{D_2}& A_0\langle\zeta\rangle\\
A_0\langle\gamma\rangle \ar[u]^{D_1} \ar[r]^{D_2}& A_0\langle\zeta\gamma\rangle\ar[u]_{D_1}
}
\] 
where $A_0=\Bbbk[x_1^{\pm1},x_2^{\pm1},x_3^{\pm1}]$.  Thus we may compute the cohomology via the spectral sequence by the filtration along the columns. Cohomology with respect to $D_1$ will give a Koszul complex, and taking its cohomology we obtain:
\[
\Bbbk[x_1^{\pm1},x_2^{\pm1},x_3^{\pm1},\zeta]/(1-x_1x_2x_3).
\]
The spectral sequence collapses on the second page, and since it is concentrated along a single row, it will in fact exactly compute the cohomology for us.  

Thus it remains to compute the cohomology of $D_2$; from the relation, we can forget about $x_3$, and so we obtain $\Bbbk[x_1^{\pm1},x_2^{\pm1},\zeta]$ with operator $D_2=\zeta(c_1x_1\d_{x_2}+c_2x_2\d_{x_2})$.  Recall that $c_1=\alpha$ and $c_2=1$.  Thus if $\alpha\notin\Q$, we obtain that the cohomology is $\Bbbk[\zeta]$, i.e. we have $\widetilde{H_u}\cong\mathbb{G}^{0|1}$.  

On the other hand, suppose that $\alpha=m/n\in\Q$ in reduced form.  Then the cohomology is given by $\Bbbk[z^{\pm1},\zeta]$, where $z=x_1^nx_2^{-m}$.  One can check that the Hopf algebra structure gives $\widetilde{H_u}\cong \mathbb{G}_m\times\mathbb{G}^{0|1}$.

\bibliographystyle{amsalpha}

	\textsc{\footnotesize Vera Serganova, Dept. of Mathematics, University of California at Berkeley, Berkeley, CA 94720} 
	
	\textit{\footnotesize Email address:} \texttt{\footnotesize serganov@math.berkeley.edu}
	
	\textsc{\footnotesize Alexander Sherman, Dept. of Mathematics, University of Sydney, Camperdown, Australia} 
	
	\textsc{\footnotesize and Dept. of Mathematics, Ben Gurion University, Beer-Sheva,	Israel} 
	
	\textit{\footnotesize Email address:} \texttt{\footnotesize xandersherm@gmail.com}

\end{document}